\title[Hyperplane arrangements]{K-stability of log Fano hyperplane arrangements}
\author{Kento Fujita} 
\date{\today}
\subjclass[2010]{Primary 14J45; Secondary 14L24}
\keywords{Fano varieties, K-stability, hyperplane arrangements}
\address{Research Institute for Mathematical Sciences, Kyoto University, Kyoto 606-8502, Japan}
\email{fujita@kurims.kyoto-u.ac.jp}
\newcommand{\pr}{\mathbb{P}}
\newcommand{\Z}{\mathbb{Z}}
\newcommand{\Q}{\mathbb{Q}}
\newcommand{\R}{\mathbb{R}}
\newcommand{\C}{\mathbb{C}}
\newcommand{\LL}{\mathbb{L}}
\newcommand{\A}{\mathbb{A}}
\newcommand{\G}{\mathbb{G}}
\newcommand{\Supp}{\operatorname{Supp}}
\newcommand{\Spec}{\operatorname{Spec}}
\newcommand{\DIV}{\operatorname{div}}
\newcommand{\Aut}{\operatorname{Aut}}
\newcommand{\Proj}{\operatorname{Proj}}
\newcommand{\codim}{\operatorname{codim}}
\newcommand{\lct}{\operatorname{lct}}
\newcommand{\DF}{\operatorname{DF}}
\newcommand{\ord}{\operatorname{ord}}
\newcommand{\vol}{\operatorname{vol}}
\newcommand{\NA}{\operatorname{NA}}
\newcommand{\PGL}{\operatorname{PGL}}
\newcommand{\SL}{\operatorname{SL}}
\newcommand{\diag}{\operatorname{diag}}
\newcommand{\mult}{\operatorname{mult}}
\newcommand{\sP}{\mathcal{P}}
\newcommand{\sO}{\mathcal{O}}
\newcommand{\sX}{\mathcal{X}}
\newcommand{\sY}{\mathcal{Y}}
\newcommand{\sL}{\mathcal{L}}
\newcommand{\sF}{\mathcal{F}}
\newcommand{\sW}{\mathcal{W}}
\newcommand{\sslash}{\mathbin{/\mkern-6mu/}}
\newtheorem{thm}{Theorem}[section]
\newtheorem{lemma}[thm]{Lemma}
\newtheorem{proposition}[thm]{Proposition}
\newtheorem{corollary}[thm]{Corollary}
\theoremstyle{definition}
\newtheorem{definition}[thm]{Definition}
\newtheorem{remark}[thm]{Remark}
\newtheorem{example}[thm]{Example}
\newtheorem*{ack}{Acknowledgments}
\begin{document}

\maketitle 

\begin{abstract}
In this article, we completely determine which log Fano hyperplane arrangements are 
uniformly K-stable, K-stable, K-polystable, K-semistable or not. 
\end{abstract}

\setcounter{tocdepth}{1}
\tableofcontents

\section{Introduction}\label{intro_section}

Let $(X, \Delta)$ be a \emph{log Fano pair}, that is, $X$ is a normal projective variety 
over the complex number field $\C$, $\Delta$ is an effective 
$\Q$-divisor on $X$ such that the pair $(X, \Delta)$ is klt and 
$-(K_X+\Delta)$ is ample. (For the minimal model program, we refer the readers to 
\cite{KoMo}. For example, ``klt" stands for ``kawamata log terminal", and 
``lc" stands for ``log canonical".) 
We are interested in the question whether $(X, \Delta)$ is 
\emph{uniformly K-stable}, \emph{K-stable}, \emph{K-polystable}, 
\emph{K-semistable} or not (see 
\cite{tian, don, stoppa, B, CDS1, CDS2, CDS3, tian2} and \cite{sz, dervan, BHJ, BBJ, fjt} 
for example). 
We recall the definition of those stability conditions in \S \ref{K_section}. 
However, in general, it is hard to determine those stability conditions 
for given log Fano pairs. 

In this article, we focus on log Fano hyperplane arrangements. 

\begin{definition}[{see \cite{mustata, teitler}}]\label{ha_dfn}
Let $(X, \Gamma)$ be an $n$-dimensional \emph{hyperplane arrangement}, that is, 
$X$ is equal to $\pr^n$ and $\Gamma$ is of the form $\sum_{i=1}^md_iH_i$, where 
$d_i\in\Q_{>0}$ and $H_1,\dots,H_m$ are mutually distinct hyperplanes. 
\begin{enumerate}
\renewcommand{\theenumi}{\arabic{enumi}}
\renewcommand{\labelenumi}{(\theenumi)}
\item\label{ha_dfn1}
$(X, \Gamma)$ is said to be a \emph{log Fano hyperplane arrangement} if 
$(X, \Gamma)$ is a log Fano pair. 
\item\label{ha_dfn2}
$(X, \Gamma)$ is said to be a \emph{log Calabi-Yau hyperplane arrangement} if 
$K_X+\Gamma\sim_\Q 0$. Moreover, if $(X, \Gamma)$ is lc (resp., klt), then 
$(X, \Gamma)$ is said to be an \emph{lc Calabi-Yau hyperplane arrangement} 
(resp., a \emph{klt Calabi-Yau hyperplane arrangement}). 
We sometimes say  in \S \ref{P_section} that 
the pair $(\pr^0=\Spec \C, \emptyset)$ is a zero-dimensional 
klt Calabi-Yau hyperplane arrangement. 
\item\label{ha_dfn3}
We set 
\begin{eqnarray*}
\LL(X, \Gamma)&:=&\left\{\bigcap_{i\in I}H_i, \Big|\, I\subset\{1,\dots,m\}\right\}, \\
\LL'(X, \Gamma)&:=&\LL(X, \Gamma)\setminus\{X, \emptyset\}.
\end{eqnarray*}
(We set $\LL(X, \Gamma):=\{X$, $\emptyset\}$ if $n=0$.)
\item\label{ha_dfn4}
For any $W\in\LL(X, \Gamma)$, we set 
\begin{eqnarray*}
d_{(X, \Gamma)}(W)&:=&\sum_{W\subset H_i}d_i, \\
c^X(W) & := & \codim_X W.
\end{eqnarray*}
(We set $c^X(\emptyset):=n+1$.) Moreover, we set 
\[
d_{(X, \Gamma)}:=d_{(X, \Gamma)}(\emptyset)=\sum_{i=1}^md_i.
\]
(We set $d_{(X, \Gamma)}(X):=0$ and $d_{(X, \Gamma)}(\emptyset):=1$ if $n=0$.)
\end{enumerate}
\end{definition}

One-dimensional log Fano pairs are 
always log Fano hyperplane arrangements since $X$ must be isomorphic to $\pr^1$. 
We know that uniform K-stability, 
K-semistability of one-dimensional log Fano pairs is well-understood 
(see \cite[Theorem 3]{liR} and \cite[Example 6.6]{fjt}) and the condition is classically 
well-known (see \cite{troyanov, mcowen, CL, CY, LT} for example). In particular, 
the condition is numerical; we can understand uniform K-stability and K-semistability 
of one-dimensional log Fano hyperplane arrangements by only 
looking at the coefficients of its boundaries. 

\begin{thm}[{see \cite[Theorem 3]{liR} and \cite[Example 6.6]{fjt}}]\label{one_thm}
Let $(X, \Delta)=(\pr^1, \sum_{i=1}^m d_ip_i)$ be a one-dimensional log Fano pair, 
where $p_i$ are distinct points and $d_i\in(0,1)\cap\Q$. Then $(X, \Delta)$ is uniformly 
K-stable $($resp., K-semistable$)$ if and only if $\sum_{j\neq i}d_j> d_i$ $($resp., 
$\sum_{j\neq i}d_j\geq d_i)$ holds for any $1\leq i\leq m$. 
\end{thm}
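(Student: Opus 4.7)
My plan is to apply the valuative characterization of K-stability via the stability threshold $\delta_{(X,\Delta)}$: by Fujita--Li, K-semistability of $(X,\Delta)$ is equivalent to $\delta_{(X,\Delta)}\geq 1$, and by Blum--Jonsson together with Fujita, uniform K-stability is equivalent to $\delta_{(X,\Delta)}>1$. Since $X=\pr^1$ is a smooth curve, every prime divisor on any birational model of $X$ is of the form $\ord_p$ for a unique closed point $p\in\pr^1$ (any proper birational map to a normal curve is an isomorphism). Thus $\delta_{(X,\Delta)}$ can be computed as the infimum, over closed points $p\in\pr^1$, of the ratio $A_{(X,\Delta)}(\ord_p)/S_{(X,\Delta)}(\ord_p)$, and the proof reduces to a direct computation.

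First I would compute the log discrepancy: $A_{(X,\Delta)}(\ord_p)=1-d_i$ if $p=p_i$ and $A_{(X,\Delta)}(\ord_p)=1$ otherwise. Next, since $-K_X-\Delta$ has positive degree $2-\sum_j d_j$ and volume coincides with degree in dimension one, I would compute
\[
S_{(X,\Delta)}(\ord_p)=\frac{1}{2-\sum_j d_j}\int_0^{\,2-\sum_j d_j}\!\bigl(2-\textstyle\sum_j d_j-t\bigr)\,dt=\frac{2-\sum_j d_j}{2},
\]
independent of the choice of $p$. For a point $p\notin\{p_1,\dots,p_m\}$ the ratio equals $2/(2-\sum_j d_j)$, which is automatically strictly greater than $1$ because $\sum_j d_j>0$; hence such points never obstruct (uniform) K-stability. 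The relevant points are therefore $p=p_i$, at which the ratio is $2(1-d_i)/(2-\sum_j d_j)$.

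Finally, the inequality $2(1-d_i)/(2-\sum_j d_j)>1$ is equivalent by straightforward algebra to $\sum_{j\neq i}d_j>d_i$, and likewise with $\geq$ in place of $>$. Taking the infimum over all $i$ then yields $\delta_{(X,\Delta)}>1$ (resp.\ $\delta_{(X,\Delta)}\geq 1$) iff the stated inequalities hold for every $i$, which gives the theorem.

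The only subtle point is the reduction of $\delta$ to divisorial valuations of the form $\ord_p$; this is really a statement that on a smooth projective curve every divisorial valuation on the function field is, up to scaling, the order of vanishing at a closed point, so there are no hidden valuations. Beyond that, everything is a one-variable volume integral, so I do not expect any genuine obstacle; the argument is essentially the one in \cite[Example 6.6]{fjt} specialized to $\pr^1$.
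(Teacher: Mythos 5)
Your proof is correct, and it follows the route of \cite[Example 6.6]{fjt} --- one of the two sources the paper cites for this theorem --- rather than the route the paper's own machinery takes; the paper never reproves the one-dimensional statement directly, but recovers it as the $n=1$ case of Theorem \ref{mainthm}, where the two directions are handled asymmetrically. Your necessity computation (the ratio $2(1-d_i)/(2-\sum_j d_j)$ at $p=p_i$) is exactly the $n=1$ instance of Proposition \ref{blowup_prop}, since $\hat{\beta}_{(X,\Delta)}(F)=1-S_{(X,\Delta)}(F)/A_{(X,\Delta)}(F)$ in your notation, so the two arguments coincide there. For sufficiency, however, the paper does not enumerate valuations at all: Corollary \ref{suff_cor} deduces K-semistability (resp.\ uniform K-stability) of $(X,\Delta)$ from K-semistability of $(\pr^n,0)$ via the interpolation statement Proposition \ref{technical_prop}, using only the log canonical threshold of the boundary. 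You instead verify the valuative criterion on every divisorial valuation, which is legitimate precisely because $\dim X=1$: a projective birational morphism from a normal (hence smooth) curve onto $\pr^1$ is an isomorphism, so prime divisors over $X$ are exactly the closed points, and the infimum defining $\delta_{(X,\Delta)}$ runs over finitely many distinct values, hence is attained --- which also makes your equivalence ``uniformly K-stable iff $\delta>1$'' reducible to the elementary uniform bound in Theorem \ref{beta_thm} without invoking the full strength of \cite{BJ}. What your approach buys is a short, self-contained proof; what it cannot do is generalize, since in dimension $\geq 2$ the divisorial valuations are not exhausted by any such list, and the paper's lct/interpolation argument is exactly what replaces your case-check. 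One small point to make explicit: your claim that points outside $\Supp\Delta$ give ratio strictly greater than $1$ uses $\sum_j d_j>0$, i.e.\ $\Delta\neq 0$ (equivalently $m\geq 1$); for $\Delta=0$ your formula gives $\delta=1$, consistent with $(\pr^1,0)$ being K-semistable but not uniformly K-stable, so the theorem should be read with $m\geq 1$ understood.
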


We generalize Theorem \ref{one_thm} 
for log Fano hyperplane arrangements of any dimension. 
In particular, we will show that the conditions for 
uniform K-stability, K-stability, K-polystability and K-semistability of log Fano 
hyperplane arrangements depend only on those combinatorial informations. 

We recall the notion of log canonical thresholds. 

\begin{definition}[Log canonical thresholds]\label{lct_dfn}
Let $(V, \Delta_V)$ be a klt pair with $\Delta_V$ effective 
$\Q$-divisor and let $M$ be an effective $\Q$-Cartier $\Q$-divisor on $V$. 
We set 
\[
\lct(V, \Delta_V; M):=\max\{a\in\Q_{>0}\,|\, (V, \Delta_V+aM)\text{ is lc}\}.
\]
\end{definition}

If $V=\pr^n$, $\Delta_V=0$ and $(V, M)$ is a hyperplane arrangement, then 
the value $\lct(V, \Delta_V; M)$ is well-understood. 

\begin{thm}[{\cite{mustata, teitler}}]\label{mustata_thm}
Let $(X, \Gamma)$ be a hyperplane arrangement with $\Gamma\neq 0$. Then 
\[
\lct(X, 0; \Gamma)=\min_{W\in\LL'(X, \Gamma)}
\left\{\frac{c^X(W)}{d_{(X, \Gamma)}(W)}\right\}.
\]
\end{thm}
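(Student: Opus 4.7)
The plan is to compute $\lct(X,0;\Gamma)$ via an explicit log resolution obtained by iteratively blowing up the strata of the arrangement in order of increasing dimension. Let me write $a_0 := \min_{W\in\LL'(X,\Gamma)} c^X(W)/d_{(X,\Gamma)}(W)$ for the claimed value.

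First I would establish the upper bound $\lct(X,0;\Gamma)\le a_0$ by evaluating a single divisorial valuation for each stratum $W$. If $W=H_i$ is one of the hyperplanes, the distinctness of the $H_j$ forces $d_{(X,\Gamma)}(H_i)=d_i$, and the lc condition along $H_i$ requires $ad_i\le 1$, i.e., $a\le c^X(H_i)/d_{(X,\Gamma)}(H_i)$. If $c^X(W)\ge 2$, blowing up $W$ produces an exceptional divisor $E_W$ with log discrepancy $c^X(W)$ over $X$; each hyperplane $H_i$ containing $W$ vanishes to order one along $E_W$, so the multiplicity of the pullback of $\Gamma$ along $E_W$ equals $d_{(X,\Gamma)}(W)$, and lc forces $a\le c^X(W)/d_{(X,\Gamma)}(W)$. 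Taking minima yields $\lct(X,0;\Gamma)\le a_0$.

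For the reverse inequality, I would assemble all these tests into a single log resolution. Enumerate the elements $W\in\LL'(X,\Gamma)$ with $c^X(W)\ge 2$ as $W_1,\dots,W_N$ so that $\dim W_1\le\cdots\le\dim W_N$, and let $\pi:Y\to X$ be the composition of the blow-ups of the successive strict transforms of the $W_k$; this is the De Concini--Procesi wonderful model of the subspace arrangement. The ordering by dimension ensures that at each stage the center is smooth and meets the existing exceptional boundary in simple normal crossings, so $\pi$ is a log resolution of $(X,\Gamma)$. Induction on $k$ then yields
\[
K_{Y/X}=\sum_{k=1}^N\bigl(c^X(W_k)-1\bigr)E_{W_k},\qquad \pi^*\Gamma=\sum_{i=1}^m d_i\tilde{H}_i+\sum_{k=1}^N d_{(X,\Gamma)}(W_k)\,E_{W_k},
\]
from which log canonicity of $(X,a\Gamma)$ reduces to $ad_i\le 1$ for all $i$ and $a\,d_{(X,\Gamma)}(W_k)\le c^X(W_k)$ for all $k$, i.e., $a\le a_0$.

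The main technical obstacle is the inductive verification of the two formulas above, together with the smoothness and transversality claims needed to call $\pi$ a log resolution. The multiplicity formula for $\pi^*\Gamma$ propagates correctly because at each blow-up of $W_k$, a strict transform of a hyperplane $H_i$ with $H_i\supset W_k$ picks up the new exceptional divisor $E_{W_k}$ with multiplicity exactly one, while the remaining hyperplane strict transforms and the earlier exceptional divisors $E_{W_j}$ ($j<k$) meet the center $\tilde{W}_k$ transversally and therefore contribute nothing new. The ordering by dimension is precisely what makes each successive center smooth and the total boundary simple normal crossings, so these local computations remain globally consistent and the combined formulas deliver $a_0$ as the exact threshold.
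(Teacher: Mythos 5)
Your proposal is correct, but note that the paper itself offers no proof of this theorem: it is quoted from \cite{mustata, teitler}, so the right comparison is with those references, and your argument is essentially Musta\c{t}\u{a}'s original one. The bookkeeping is accurate on both sides: for the upper bound it indeed suffices to test one divisorial valuation per stratum ($\ord_{H_i}$ itself when $W=H_i$, giving $1-ad_i\geq 0$, and the exceptional divisor of the single blowup of $W$ when $c^X(W)\geq 2$, giving $c^X(W)-a\,d_{(X,\Gamma)}(W)\geq 0$); for the lower bound, the identities $K_{Y/X}=\sum_k (c^X(W_k)-1)E_{W_k}$ and $\ord_{E_{W_k}}\pi^*\Gamma=d_{(X,\Gamma)}(W_k)$ do propagate inductively, the only facts needed being that $\tilde{W}_k$ is never contained in an earlier exceptional divisor, nor in $\tilde{H}_i$ unless $H_i\supset W_k$ (and that hyperplanes lie in $\LL'(X,\Gamma)$, so the conditions $ad_i\leq 1$ are absorbed into $a\leq a_0$). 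The one place where you lean on a genuinely nontrivial input is the claim that the iterated blowup in order of increasing dimension has smooth centers and simple normal crossing total boundary: this is not automatic from the ordering, but is exactly the De Concini--Procesi wonderful model theorem for the maximal building set, which you correctly invoke; a self-contained treatment would need the separation lemma that, since $W_j\cap W_k$ is itself a stratum of smaller dimension that has already been blown up, the strict transforms of $W_j$ and $W_k$ are disjoint by the time either is a center (this also shows that strata of equal dimension can be processed in any order). Granting that theorem, your proof is complete. For context, Teitler's note \cite{teitler}, also cited by the paper, refines this by blowing up only the dense edges (a smaller building set); that matters for computing the full multiplier ideals but is not needed for the log canonical threshold formula, so your use of the full intersection lattice is perfectly adequate here.
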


Using Theorem \ref{mustata_thm}, a valuative criterion for K-stability 
introduced in \cite{li, fjt, fjt17} and so on, we get the following theorem. 

\begin{thm}[Main result]\label{mainthm}
Let $(X, \Delta)$ be an $n$-dimensional 
log Fano hyperplane arrangement with $\Delta\neq 0$. Set 
\[
\Gamma:=\frac{n+1}{d_{(X, \Delta)}}\Delta.
\]
\begin{enumerate}
\renewcommand{\theenumi}{\arabic{enumi}}
\renewcommand{\labelenumi}{(\theenumi)}
\item\label{mainthm1}
The following are equivalent: 
\begin{enumerate}
\renewcommand{\theenumii}{\roman{enumii}}
\renewcommand{\labelenumii}{(\theenumii)}
\item\label{mainthm11}
$(X, \Delta)$ is K-semistable, 
\item\label{mainthm12}
$(X, \Gamma)$ is an lc Calabi-Yau hyperplane arrangement, 
\item\label{mainthm13}
for any $W\in\LL'(X, \Delta)$, we have 
\[
\frac{c^X(W)}{d_{(X, \Delta)}(W)}\geq \frac{n+1}{d_{(X, \Delta)}}.
\]
\end{enumerate}
\item\label{mainthm2}
The following are equivalent: 
\begin{enumerate}
\renewcommand{\theenumii}{\roman{enumii}}
\renewcommand{\labelenumii}{(\theenumii)}
\item\label{mainthm21}
$(X, \Delta)$ is uniformly K-stable, 
\item\label{mainthm22}
$(X, \Delta)$ is K-stable, 
\item\label{mainthm23}
$(X, \Gamma)$ is a klt Calabi-Yau hyperplane arrangement, 
\item\label{mainthm24}
for any $W\in\LL'(X, \Delta)$, we have 
\[
\frac{c^X(W)}{d_{(X, \Delta)}(W)}> \frac{n+1}{d_{(X, \Delta)}}.
\]
\end{enumerate}
\item\label{mainthm3}
The following are equivalent: 
\begin{enumerate}
\renewcommand{\theenumii}{\roman{enumii}}
\renewcommand{\labelenumii}{(\theenumii)}
\item\label{mainthm31}
$(X, \Delta)$ is K-polystable, 
\item\label{mainthm32}
$(X, \Gamma)$ is a log Calabi-Yau hyperplane arrangement of class $\sP$ 
$($for the definition of class $\sP$, see \S \ref{P_section}$)$.
\end{enumerate}
\end{enumerate}
\end{thm}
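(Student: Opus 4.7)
The proof has three logical parts: translating the lc/klt conditions on $(X,\Gamma)$ into the stated combinatorial inequalities via Mustață--Teitler, establishing the K-stability characterizations via the Fujita--Li valuative criterion, and treating K-polystability through the equality cases.

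\emph{Translation.} A direct degree computation gives $K_X+\Gamma\sim_\Q 0$, so ``log Calabi-Yau'' for $(X,\Gamma)$ just means lc (resp.\ klt). Using $d_{(X,\Gamma)}(W)=\tfrac{n+1}{d_{(X,\Delta)}}\,d_{(X,\Delta)}(W)$ together with Theorem \ref{mustata_thm},
\[
\lct(X,0;\Gamma)=\frac{d_{(X,\Delta)}}{n+1}\,\min_{W\in\LL'(X,\Delta)}\frac{c^X(W)}{d_{(X,\Delta)}(W)},
\]
which immediately gives (\ref{mainthm12})$\Leftrightarrow$(\ref{mainthm13}) and (\ref{mainthm23})$\Leftrightarrow$(\ref{mainthm24}).

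\emph{Valuative criterion and strata.} By Fujita--Li \cite{li,fjt,fjt17}, $(X,\Delta)$ is K-semistable (resp.\ uniformly K-stable) iff $A_{(X,\Delta)}(E)\geq S_{(X,\Delta)}(E)$ (resp.\ $A\geq(1+\epsilon)S$) for every divisorial valuation $E$ over $X$. I test first on $E=E_W$, the exceptional divisor of the blowup $\pi\colon\tilde X\to X$ along $W\in\LL'$: linearity of $W$ gives $A_{(X,\Delta)}(E_W)=c^X(W)-d_{(X,\Delta)}(W)$, while writing $r:=n+1-d_{(X,\Delta)}$ and using that $\pi^*H-E_W$ is the pullback of $\sO(1)$ under the second Mori contraction $\tilde X\to\pr^{c^X(W)-1}$, an explicit volume expansion on the nef chamber $\{t\in[0,r]\}$ yields
\[
S_{(X,\Delta)}(E_W)=\frac{r\,c^X(W)}{n+1}.
\]
The inequality $A\geq S$ rearranges exactly to (\ref{mainthm13}) and its strict form to (\ref{mainthm24}), proving (\ref{mainthm11})$\Rightarrow$(\ref{mainthm13}) and (\ref{mainthm22})$\Rightarrow$(\ref{mainthm24}). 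For the converse, I invoke Fujita--Odaka/Blum--Jonsson to write $\delta(X,\Delta)=\lim_k\delta_k$ via basis-type divisors of $|krH|$; on $\pr^n$ every such divisor is, up to $O(1/k)$, of the form $\frac{r}{n+1}\sum_{i=0}^n H_i'$ for $n+1$ hyperplanes $H_i'$, and Theorem \ref{mustata_thm} applied to the augmented arrangement $\Delta+\tfrac{r}{n+1}\sum_i H_i'$ together with (\ref{mainthm13}) (resp.\ (\ref{mainthm24})) forces $\delta\geq 1$ (resp.\ $>1$), yielding K-semistability (resp.\ uniform K-stability, and hence K-stability).

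\emph{Main obstacle and K-polystability.} The hardest technical point is the sufficiency reduction: new strata $W\cap H_{i_1}'\cap\cdots\cap H_{i_s}'$ of the augmented arrangement have codimension $c^X(W)+s$ and weight $d_{(X,\Delta)}(W)+\tfrac{rs}{n+1}$, and one must verify that for generic $H_i'$ the lc inequality on these new strata is automatic once (\ref{mainthm13}) holds on $\LL'(X,\Delta)$---this requires a careful case analysis precisely when the original $W$ is ``extremal''. For K-polystability, K-semistability is combined with the rigidity requirement that every $E_W$ with equality $A=S$ be induced by a one-parameter subgroup of $\Aut^\circ(X,\Delta)$; by the preceding step the equality cases correspond exactly to strata $W$ achieving equality in (\ref{mainthm13}), and the rigidity forces the arrangement to decompose as an iterated join along such $W$. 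Class $\sP$ of \S\ref{P_section} encodes precisely this join-decomposability, so (\ref{mainthm31})$\Leftrightarrow$(\ref{mainthm32}) reduces to the standard product-reduction for K-polystability together with Part~(1).
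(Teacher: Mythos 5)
Your translation step (via Theorem \ref{mustata_thm}) and your necessity computation are correct and coincide with the paper's: testing on the exceptional divisor $E_W$ of the blowup along $W$, your identity $S_{(X,\Delta)}(E_W)=\frac{(n+1-d_{(X,\Delta)})\,c^X(W)}{n+1}$ is exactly equivalent to the paper's formula
\[
\hat{\beta}_{(X,\Delta)}(E_W)=\frac{c^X(W)\,d_{(X,\Delta)}-(n+1)\,d_{(X,\Delta)}(W)}{(n+1)\left(c^X(W)-d_{(X,\Delta)}(W)\right)}
\]
in Proposition \ref{blowup_prop}. One caveat even here: to get (\ref{mainthm22})$\Rightarrow$(\ref{mainthm24}) you need a valuative criterion for \emph{K-stability} of log pairs, which is not among the criteria you quote for K-semistability and uniform K-stability; the paper has to prove Theorem \ref{stps_thm} (via the correspondence between integral-fiber test configurations and dreamy divisors) and to observe that $E_W$ is dreamy because the blowup is toric.

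The genuine gap is in your sufficiency direction. Your claim that every basis-type divisor of $|krH|$ on $\pr^n$ is ``up to $O(1/k)$ of the form $\frac{r}{n+1}\sum_{i=0}^n H_i'$ for $n+1$ hyperplanes'' is false: a basis-type divisor averages the zero divisors of a basis of $H^0(\pr^n,\sO(kr))$, and for a general basis these are irreducible hypersurfaces of degree $kr$, not unions of hyperplanes. Only bases adapted to a monomial (toric) structure produce hyperplane-supported divisors, and since $\delta(X,\Delta)$ is an \emph{infimum} over all bases, testing on that special subfamily can only bound $\delta$ from above, never give the lower bound $\delta\geq 1$ you need; consequently the ``careful case analysis'' you defer in your main-obstacle paragraph cannot be completed, because the reduction feeding into it is invalid. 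The paper's sufficiency argument is entirely different and much softer: Proposition \ref{technical_prop} interpolates from the known K-semistability of $(\pr^n,0)$, using the elementary inequality $A_{(X,\Delta)}(F)\geq(1-a^{-1})A_{(X,0)}(F)$ with $a=\lct(X,0;\Delta)$ together with homogeneity of the volume function, to show that $a\geq\frac{n+1}{d_{(X,\Delta)}}$ (i.e.\ $(X,\Gamma)$ lc) implies K-semistability, and that strict inequality (klt) implies uniform K-stability. Finally, your part (3) is asserted rather than proved: you would need (a) the polystability criterion in terms of dreamy and product-type divisors (Theorem \ref{stps_thm}(2)); (b) a classification of \emph{all} lc places of a class-$\sP$ arrangement showing each is product-type (Propositions \ref{P_prop} and \ref{suff_prop}) --- the equality case of Proposition \ref{technical_prop} only gives $A_{(X,\Gamma)}(F)=0$, and such an $F$ need not be the blowup of a stratum; and (c) the converse combinatorial statement that the existence of the complementary subspaces $Q$ with $d_{(X,\Gamma)}(Q)=c$ forces a join decomposition (Theorem \ref{comb_thm}), a nontrivial induction. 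None of these appears in your sketch, so as written the proposal establishes neither implication of part (3) and only one direction of parts (1) and (2).
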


As an immediate consequence of Theorem \ref{mainthm}, we get the following 
corollary: 

\begin{corollary}\label{maincor}
Let $(X, \Delta)=(\pr^n, \sum_{i=1}^md_iH_i)$ be an $n$-dimensional 
log Fano hyperplane arrangement, where $H_i$ are distinct hyperplanes and $d_i\in(0,1)\cap\Q$. 
Assume that $\Delta\neq 0$ and $\Supp\Delta$ is simple normal crossing. 
\begin{enumerate}
\renewcommand{\theenumi}{\arabic{enumi}}
\renewcommand{\labelenumi}{(\theenumi)}
\item\label{maincor1}
The following are equivalent: 
\begin{enumerate}
\renewcommand{\theenumii}{\roman{enumii}}
\renewcommand{\labelenumii}{(\theenumii)}
\item\label{maincor11}
$(X, \Delta)$ is K-semistable $($resp., uniformly K-stable$)$, 
\item\label{maincor12}
\[
k\sum_{i=1}^md_i\geq (n+1)\sum_{j=1}^kd_{i_j}\quad
\left(\text{resp., }k\sum_{i=1}^md_i>(n+1)\sum_{j=1}^kd_{i_j}\right)
\]
holds for any $1\leq k\leq n$ and for any $1\leq i_1<\cdots<i_k\leq m$. 
\end{enumerate}
\item\label{maincor1}
The following are equivalent: 
\begin{enumerate}
\renewcommand{\theenumii}{\roman{enumii}}
\renewcommand{\labelenumii}{(\theenumii)}
\item\label{maincor21}
$(X, \Delta)$ is K-polystable, 
\item\label{maincor22}
$(X, \Delta)$ is uniformly K-stable, or 
\[
m=n+1\quad\text{and}\quad d_1=\cdots=d_{n+1}.
\]
\end{enumerate}
\end{enumerate}
\end{corollary}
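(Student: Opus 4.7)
The plan is to derive the corollary by applying Theorem \ref{mainthm} after reading off the combinatorics of $\LL'(X, \Delta)$ under the simple normal crossing hypothesis. If $\Supp \Delta = \bigcup_{i=1}^m H_i$ is SNC in $\pr^n$, then for every nonempty $I = \{i_1 < \cdots < i_k\} \subset \{1, \ldots, m\}$ with $k \leq n$, the intersection $W_I := \bigcap_{j=1}^k H_{i_j}$ is a linear subspace of codimension exactly $k$, and $H_\ell$ contains $W_I$ if and only if $\ell \in I$. Thus $\LL'(X, \Delta)$ is in bijection with the collection of such subsets, with $c^X(W_I) = k$ and $d_{(X, \Delta)}(W_I) = \sum_{j=1}^k d_{i_j}$.

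Substituting these values into Theorem \ref{mainthm}(\ref{mainthm1})(\ref{mainthm13}), respectively Theorem \ref{mainthm}(\ref{mainthm2})(\ref{mainthm24}), converts the inequality $c^X(W)/d_{(X,\Delta)}(W) \geq (n+1)/d_{(X,\Delta)}$ into $k \sum_{i=1}^m d_i \geq (n+1) \sum_{j=1}^k d_{i_j}$ (and similarly for the strict version), which is precisely part (1) of the corollary.

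For the K-polystability statement in part (2), I would invoke Theorem \ref{mainthm}(\ref{mainthm3}). When $m = n+1$ and $d_1 = \cdots = d_{n+1}$, the normalized pair is $(\pr^n, \sum_{i=1}^{n+1} H_i)$, the standard toric log Calabi-Yau simplex, whose large torus automorphism group should place it in class $\sP$. Conversely, assume $(X, \Delta)$ is K-polystable but not uniformly K-stable; by part (1) already established, some $W_I$ with $1 \leq |I| \leq n$ satisfies $c^X(W_I) = d_{(X, \Gamma)}(W_I)$, and hence is an lc center of $(X, \Gamma)$. The class $\sP$ condition on $(X, \Gamma)$ should then force the collection of such lc centers, together with their multiplicities, to be preserved by a nontrivial group of automorphisms of the arrangement, which in the SNC setting rigidifies the configuration to the equilateral simplex.

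The main obstacle is this converse direction: extracting from the class $\sP$ condition (introduced in \S \ref{P_section}) the rigid combinatorial conclusion that the only non-klt SNC log Calabi-Yau hyperplane arrangement of class $\sP$ is the equilateral simplex. The natural approach is to list the SNC lc centers as flats $W_I$ saturating $c^X(W_I) = d_{(X,\Gamma)}(W_I)$, note that each such equality is a linear constraint on the weights $(d_i)$, and use the symmetry demanded by class $\sP$ to collapse these constraints to $m = n+1$ with $d_1 = \cdots = d_{n+1}$.
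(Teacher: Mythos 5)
Your part (1) is correct and is exactly the deduction the paper intends: under the SNC hypothesis every $W_I=\bigcap_{j\in I}H_j$ with $1\le |I|=k\le n$ is nonempty of codimension $k$, no $H_\ell$ with $\ell\notin I$ contains $W_I$, and no $n+1$ of the $H_i$ share a point; hence $\LL'(X,\Delta)=\{W_I\}$ with $c^X(W_I)=k$ and $d_{(X,\Delta)}(W_I)=\sum_{j\in I}d_j$, and Theorem \ref{mainthm} (1) and (2) translate into the stated inequalities. In part (2), the implication from the equilateral simplex to K-polystability is also essentially right, but the correct justification is not a ``large torus automorphism group'': by Definition \ref{P_dfn} together with the zero-dimensional convention, the normalized pair $(X,\Gamma)=(\pr^n,\sum_{i=1}^{n+1}H_i)$ is literally $S((\pr^0,\emptyset),\dots,(\pr^0,\emptyset))$, hence of class $\sP$, and Theorem \ref{mainthm} (3) applies.

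The genuine gap is the converse direction of part (2), which you yourself flag as ``the main obstacle'' and then address only by appealing to a ``symmetry demanded by class $\sP$''. Class $\sP$ (Definition \ref{P_dfn}) is not a symmetry condition; it is a structural decomposition, and what is missing is a short counting argument against that structure. Suppose $(X,\Delta)$ is K-polystable but not uniformly K-stable. By Theorem \ref{mainthm} (2) and (3), $(X,\Gamma)=S((\pr^{n_1},\Xi_1),\dots,(\pr^{n_s},\Xi_s))$ with all $(\pr^{n_i},\Xi_i)$ klt Calabi--Yau, and $(X,\Gamma)$ is not klt, which forces $s\ge 2$ (if $s=1$ the pair is klt); in particular every $Q_i$ is nonempty. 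If some $n_i\ge 1$, then $\Xi_i$ has coefficients $<1$ summing to $n_i+1$, hence at least $n_i+2$ components, so $\Gamma_i=\langle\Xi_i,Q_i\rangle$ contributes at least $n_i+2$ hyperplanes of $\Supp\Delta$, all containing the linear space $Q_i$ of codimension $n_i+1\le n$. If $n_i+2\le n$, this contradicts SNC, which would require those $n_i+2$ hyperplanes to meet in codimension $n_i+2$; if $n_i+2=n+1$, it contradicts SNC because then $n+1$ of the $H_i$ would pass through a common point. Hence all $n_i=0$, so $s=n+1$ and, by Definition \ref{projective_dfn} (4), $\Gamma=\sum_{i=1}^{n+1}Q_i$ consists of exactly $n+1$ hyperplanes, each with coefficient $1$; unwinding $\Gamma=\frac{n+1}{d_{(X,\Delta)}}\Delta$ gives $m=n+1$ and $d_1=\cdots=d_{n+1}$. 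As written, your proposal never extracts from Definition \ref{P_dfn} any constraint on the number of hyperplanes or on their coefficients, so the ``rigidification to the equilateral simplex'' is asserted rather than proved; the automorphism-flavored route could in principle be pursued through Proposition \ref{suff_prop}, but that is a statement about valuations and would still need the combinatorial step above.
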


\begin{remark}\label{mainthm_rmk}
\begin{enumerate}
\renewcommand{\theenumi}{\arabic{enumi}}
\renewcommand{\labelenumi}{(\theenumi)}
\item\label{mainthm_rmk1}
The condition for class $\sP$ is purely combinatorial. See \S \ref{P_section}. 
\item\label{mainthm_rmk2}
If $\Delta=0$, then $(X, \Delta)=(\pr^n, 0)$ is K-polystable but not K-stable 
by \cite[Theorem 1.1]{B}. We remark 
that K-semistability of $(X, \Delta)=(\pr^n, 0)$ is classical 
(see \cite{kempf, don} for example) and is proved purely algebraically 
(see \cite{li, blum, BJ} for example). 
The proof of Theorem \ref{mainthm} is purely algebraic, and 
we only use \emph{K-semistability} of $(\pr^n, 0)$ for the proof of Theorem \ref{mainthm}. 
\item\label{mainthm_rmk3}
The equivalence between the condition \eqref{mainthm12} and the condition 
\eqref{mainthm13} in Theorem \ref{mainthm}, 
and the equivalence between the condition \eqref{mainthm23} and the condition 
\eqref{mainthm24} in Theorem \ref{mainthm} follow immediately from Theorem 
\ref{mustata_thm}. 
\end{enumerate}
\end{remark}

The strategy for the proof of Theorem \ref{mainthm} is the following. 
We firstly consider a valuative criterion for K-stability, K-polystability of 
log Fano pairs. More precisely, we will prove the following: 

\begin{thm}[{=Theorem \ref{stps_thm}}]\label{stps_intro_thm}
Let $(X, \Delta)$ be a log Fano pair. 
\begin{enumerate}
\renewcommand{\theenumi}{\arabic{enumi}}
\renewcommand{\labelenumi}{(\theenumi)}
\item\label{stps_intro_thm1}
The following are equivalent: 
\begin{enumerate}
\renewcommand{\theenumii}{\roman{enumii}}
\renewcommand{\labelenumii}{(\theenumii)}
\item\label{stps_intro_thm11}
$(X, \Delta)$ is K-stable, 
\item\label{stps_thm12}
$\hat{\beta}_{(X, \Delta)}(F)>0$ holds for any dreamy prime divisor $F$ over $(X, \Delta)$ 
$($see Definition \ref{vol_dfn}$)$.
\end{enumerate}
\item\label{stps_intro_thm2}
The following are equivalent: 
\begin{enumerate}
\renewcommand{\theenumii}{\roman{enumii}}
\renewcommand{\labelenumii}{(\theenumii)}
\item\label{stps_intro_thm21}
$(X, \Delta)$ is K-polystable, 
\item\label{stps_intro_thm22}
$\hat{\beta}_{(X, \Delta)}(F)\geq 0$ holds for any dreamy prime divisor $F$ over 
$(X, \Delta)$, and equality holds only if $F$ is a product-type prime divisor over 
$(X, \Delta)$ $($see Definition \ref{prod_div_dfn}$)$.
\end{enumerate}
\end{enumerate}
\end{thm}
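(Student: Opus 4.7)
The plan is to deduce Theorem \ref{stps_intro_thm} from the valuative criterion for K-semistability together with the reduction to special test configurations, translating every stability notion into a statement about dreamy prime divisors via the Ding invariant.

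First, I would recall that for a dreamy prime divisor $F$ over $(X,\Delta)$, the associated graded algebra of the filtration of $R(-(K_X+\Delta))$ induced by $\ord_F$ is finitely generated, so $F$ gives rise to a (non-product) special test configuration $(\sX,\Delta_{\sX};-K_{\sX}-\Delta_{\sX})$ of $(X,\Delta)$ via a weighted blowup construction. Conversely, by the results of Li--Xu and their extension to log Fano pairs, every special test configuration of $(X,\Delta)$ arises in this way (up to the $\G_m$-scaling), and the test configuration is of product type precisely when $F$ is of product type in the sense of Definition \ref{prod_div_dfn}. The key numerical identity at the core of the proof, established in \cite{fjt,fjt17,li}, is
\[
\Ding(\sX,\Delta_{\sX};-K_{\sX}-\Delta_{\sX}) \;=\; \hat{\beta}_{(X,\Delta)}(F)\cdot (\text{positive constant}),
\]
expressing the Ding invariant of the special degeneration in terms of $\hat{\beta}$ (equivalently $A_{(X,\Delta)}(F) - S_{(X,\Delta)}(F)$, up to normalization).

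For part (\ref{stps_intro_thm1}), I would first invoke the equivalence between K-stability and Ding-stability for log Fano pairs (Berman, Fujita), which reduces K-stability to checking $\Ding(\sX) > 0$ on all non-trivial normal test configurations. Then the Li--Xu MMP reduction says it is enough to test on special test configurations. Combining this with the identity above, K-stability is equivalent to $\hat{\beta}_{(X,\Delta)}(F) > 0$ for every dreamy prime divisor $F$. The implication $(\text{i})\Rightarrow(\text{ii})$ is immediate from the construction of a special test configuration out of a dreamy $F$; for $(\text{ii})\Rightarrow(\text{i})$, I would run the converse direction via the reduction to special test configurations.

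For part (\ref{stps_intro_thm2}), K-polystability is K-semistability together with the condition that the only special test configurations of Ding invariant zero are of product type. K-semistability itself is equivalent to $\hat{\beta}_{(X,\Delta)}(F) \geq 0$ for all dreamy (in fact all) prime divisors over $X$ by the Fujita--Li valuative criterion. Hence I only need to show that, under the K-semistable hypothesis, $\hat{\beta}_{(X,\Delta)}(F) = 0$ for a dreamy $F$ forces the associated special test configuration to be of product type, i.e.\ $F$ itself is of product type. This is the part requiring the most care: the main obstacle is not a single hard estimate but rather setting up cleanly the dictionary between product-type dreamy divisors and product test configurations, especially to verify that the forward construction (dreamy $F$ with $\hat{\beta}=0$ yielding a product degeneration) matches the reverse one (the central fiber of a product test configuration being extracted by a product-type divisor). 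I would handle this by analyzing the $\G_m$-action on the Rees algebra: $\hat{\beta}(F)=0$ combined with K-semistability ensures that the total space of the special test configuration has trivial non-Archimedean Mabuchi / Ding functional, which in the polystable setting forces the central fiber to be equivariantly isomorphic to $(X,\Delta)$; this isomorphism is exactly what the product-type condition on $F$ records. With this dictionary in place, both implications of (\ref{stps_intro_thm2}) follow.
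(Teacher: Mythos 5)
Your overall framework --- reducing to test configurations with integral (or klt) central fibre via Li--Xu, matching those with dreamy prime divisors through the Rees-algebra construction, and converting the stability invariant into $\hat{\beta}$ --- is the same as the paper's; the paper works directly with the Donaldson--Futaki invariant and the formula $\DF_\Delta(\sX,\sL)=c\cdot\hat{\beta}_{(X,\Delta)}(F)\cdot A_{(X,\Delta)}(F)$ from Theorem \ref{integral_thm} rather than passing through Ding-stability, but that difference is cosmetic, and your part (\ref{stps_intro_thm1}) is essentially the paper's argument.

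The gap is in part (\ref{stps_intro_thm2}), in the implication (ii) $\Rightarrow$ (i). What you say you ``only need to show'' --- that under the K-semistable hypothesis alone, $\hat{\beta}_{(X,\Delta)}(F)=0$ forces $F$ to be product-type --- is false as stated: it would say that every K-semistable log Fano pair is K-polystable, and Example \ref{alpha_ex} together with Proposition \ref{alpha_prop} gives explicit counterexamples. The hypothesis you must use is (ii) itself, which hands you ``$F$ is product-type'' for free; what then remains is purely construction-level, with no stability input at all: (a) by Definition \ref{prod_div_dfn} (equivalently, Proposition \ref{dream_prod_prop}), $F$ being product-type means exactly that $(\sX^F,\sL^F)/\A^1$ is a product-type test configuration, and (b) any nontrivial test configuration with $\sX_0$ integral and $\DF_\Delta=0$ is isomorphic to $(\sX^{F,c},\sL^{F,c}+d\sX_0^{F,c})/\A^1$ for such an $F$, and being product-type --- a condition only on the pair $(\sX,\Delta_{\sX})$ --- survives both the base change $t\mapsto t^c$ and the twist by $d\sX_0^{F,c}$. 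Your proposed mechanism instead invokes ``the polystable setting'' to force the central fibre to be equivariantly isomorphic to $(X,\Delta)$; but K-polystability is precisely the conclusion being proved in this direction, so the argument is circular. Moreover, an isomorphism of the central fibre is a priori weaker than the total-space isomorphism $(\sX,\Delta_{\sX})\simeq(X\times\A^1,\Delta\times\A^1)$ that ``product-type'' demands, so even granting that step you would still owe an argument. (A smaller slip: the test configuration attached to a dreamy $F$ is always \emph{nontrivial}, not ``non-product'' as your first paragraph asserts --- product-type divisors do occur, and they are the whole point of part (\ref{stps_intro_thm2}).)
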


Secondly, we will see the following easy sufficient condition for log Fano pairs being 
uniformly K-stable or K-semistable.

\begin{proposition}[{see Proposition \ref{technical_prop}}]\label{technical_intro_prop}
Let $(X, \Delta)$ be a K-semistable log Fano pair, set $L:=-(K_X+\Delta)$, 
let $r$ be a rational number with $r\in(0,1)$, 
and let $B$ be an effective $\Q$-divisor on $X$ with $B\sim_\Q rL$. 
Set $a:=\lct(X, \Delta; B)$. Assume that $a\geq r^{-1}$. 
Then $(X, \Delta+B)$ is a K-semistable log Fano pair. 
Moreover, if $(X, \Delta)$ is uniformly K-stable or $a>r^{-1}$, then 
$(X, \Delta+B)$ is a uniformly K-stable log Fano pair. 
\end{proposition}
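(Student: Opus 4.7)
The plan is to verify the valuative characterization of K-semistability and uniform K-stability for the candidate pair $(X, \Delta + B)$ by comparing its $\beta$-invariants directly with those of $(X, \Delta)$.

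First I would confirm that $(X, \Delta+B)$ really is a log Fano pair. Ampleness of $-(K_X+\Delta+B) \sim_\Q (1-r)L$ is immediate since $r<1$. For klt-ness, one uses $a \geq r^{-1} > 1$ together with the lct hypothesis: for every prime divisor $F$ over $X$, $\lct(X, \Delta; B) = a$ gives $\ord_F(B) \leq A_{(X, \Delta)}(F)/a$, so
\[
A_{(X, \Delta+B)}(F) = A_{(X, \Delta)}(F) - \ord_F(B) \;\geq\; \left(1 - \tfrac{1}{a}\right) A_{(X, \Delta)}(F) \;>\; 0.
\]

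The heart of the argument is a comparison between $\beta$-invariants. Setting $L := -(K_X+\Delta)$, one has $-(K_X+\Delta+B) \sim_\Q (1-r)L$, and the change of variable $s = t/(1-r)$ in the volume integral defining the $S$-invariant yields the clean scaling identity
\[
S_{(X, \Delta+B)}(F) \;=\; (1-r)\, S_{(X, \Delta)}(F).
\]
Together with the log-discrepancy bound above and the inequality $1 - \tfrac{1}{a} \geq 1 - r$ (coming from $a \geq 1/r$), this gives
\[
\beta_{(X, \Delta+B)}(F) \;\geq\; (1-r)\,\beta_{(X, \Delta)}(F),
\]
and K-semistability of $(X, \Delta)$ forces the right-hand side to be nonnegative. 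Applying the Fujita--Li valuative criterion then yields K-semistability of $(X, \Delta+B)$.

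For the uniform K-stability addendum I would rewrite the same chain of inequalities as
\[
\frac{A_{(X, \Delta+B)}(F)}{S_{(X, \Delta+B)}(F)} \;\geq\; \frac{1 - 1/a}{1-r} \cdot \frac{A_{(X, \Delta)}(F)}{S_{(X, \Delta)}(F)},
\]
so that $\delta(X, \Delta+B) \geq \frac{1-1/a}{1-r}\, \delta(X, \Delta)$. If $a > 1/r$, the first factor strictly exceeds $1$ while $\delta(X, \Delta) \geq 1$ by K-semistability; if instead $(X, \Delta)$ is already uniformly K-stable, then $\delta(X, \Delta) > 1$ while the first factor is $\geq 1$. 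In either case $\delta(X, \Delta+B) > 1$, which is equivalent to uniform K-stability. The only real obstacle is deploying the correct form of the valuative criterion: one needs the Fujita--Li characterization of K-semistability (and of uniform K-stability via $\delta > 1$) in terms of \emph{all} prime divisors over $X$, with no dreamy hypothesis required for these weaker stability notions. With that in hand, the entire proof reduces to the scaling identity for $S$ and the pointwise bound on $\ord_F(B)$ supplied by the lct assumption.
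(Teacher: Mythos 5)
Your proof is correct and follows essentially the same route as the paper: the same log-discrepancy bound $A_{(X,\Delta+B)}(F)\geq(1-a^{-1})A_{(X,\Delta)}(F)$ extracted from the lct hypothesis, the same scaling of the volume integral under $L\mapsto(1-r)L$, and the same Fujita--Li valuative criterion over all prime divisors (the paper's Theorem \ref{beta_thm}, conditions (ii)). The only difference is cosmetic: you package the chain of inequalities via $S$ and the $\delta$-invariant (with $\delta\geq 1$, resp.\ $\delta>1$, as the criterion), whereas the paper runs the identical computation directly on $\hat{\beta}=1-S/A$ and produces the explicit uniform lower bound $\frac{1-r}{1-a^{-1}}\varepsilon+\frac{r-a^{-1}}{1-a^{-1}}$.
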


We can easily prove Theorem \ref{mainthm} \eqref{mainthm1} and \eqref{mainthm2} 
from the above observation plus some easy calculations (see \S \ref{blowup_section}). 
The proof of 
Theorem \ref{mainthm} \eqref{mainthm3} is relatively difficult. We must know several properties of 
product-type prime divisors over $(X, \Delta)$. See \S \ref{dreamy_section} and \S 
\ref{P_section}.

This article is organized as follows. In \S \ref{K_section}, we recall the definition for 
K-stability of log Fano pairs. Moreover, we recall a valuative criterion for 
uniform K-stability, K-semistability of log Fano pairs introduced in \cite{li, fjt, fjt17}. 
In \S \ref{dreamy_section}, we introduce a valuative criterion for \emph{K-stability}, 
\emph{K-polystability} of log Fano pairs. In \S \ref{blowup_section}, we see 
a necessary condition for K-stability of log Fano hyperplane arrangements via 
blowup along linear subspaces. In \S \ref{P_section}, we define the notion of 
log Calabi-Yau hyperplane arrangements of class $\sP$. 
Moreover, we give a combinatorial characterization of class $\sP$. 
In \S \ref{unif_section}, we give a sufficient condition for log Fano pairs being uniformly 
K-stable or K-semistable in terms of log canonical thresholds. 
In \S \ref{poly_section}, we characterize K-polystable log Fano hyperplane arrangements. 
In \S \ref{GIT_section}, we see a relationship to GIT stability. 
In \S \ref{alpha_section}, we give examples of $n$-dimensional non-K-polystable 
log Fano hyperplane arrangements $(X, \Delta)$ such that the alpha invariants 
$\alpha(X, \Delta)$ of $(X, \Delta)$ are equal to $n/(n+1)$. 

In this article, we do not distinguish $\Q$-Cartier $\Q$-divisors and $\Q$-line bundles. 
For varieties $V_1$ and $V_2$, the first (resp., the second) projection is denoted by 
$p_1\colon V_1\times V_2\to V_1$ (resp., $p_2\colon V_1\times V_2\to V_2$). For 
a projective morphism of varieties $\sX\to C$ with $\sX$ normal and 
$C$ a smooth curve, let $K_{\sX/C}$ be the relative canonical divisor of $\sX/C$, that is, 
$K_{\sX}$ minus the pullback of $K_C$. Moreover, for any $t\in C$, let $\sX_t$ be the 
scheme-theoretic fiber of $\sX\to C$ at $t\in C$.

\begin{ack}
The author thanks Doctor Joonyeong Won 
for many discussions and comments, and Professor Yuji Odaka for comments 
on \S \ref{GIT_section}. 
This work was supported by JSPS KAKENHI Grant Number JP16H06885.
\end{ack}

\section{K-stability of log Fano pairs}\label{K_section}

In this section, we always assume that $(X, \Delta)$ is an $n$-dimensional log Fano pair 
and $L:=-(K_X+\Delta)$.

\subsection{Test configurations}\label{tc_section}

In this section, we recall the definitions for uniform K-stability, K-stability, 
K-polystability and 
K-semistability of log Fano pairs. For detail, see \cite{tian, don, RT, LX, BHJ, dervan} 
and references therein.

\begin{definition}\label{tc_dfn}
\begin{enumerate}
\renewcommand{\theenumi}{\arabic{enumi}}
\renewcommand{\labelenumi}{(\theenumi)}
\item\label{tc_dfn1}
A \emph{test configuration} $(\sX,\sL)/\A^1$ of $(X, L)$ consists of: 
\begin{itemize}
\item
a normal variety $\sX$ together with a projective surjective morphism $\sX\to\A^1$, 
\item
a $\Q$-line bundle $\sL$ on $\sX$ which is ample over $\A^1$, 
\item
an action $\G_m\curvearrowright(\sX,\sL)$ such that the action commutes 
with the multiplicative action $\G_m\curvearrowright\A^1$, and 
\item
a $\G_m$-equivariant isomorphism $(\sX\setminus\sX_0, \sL|_{\sX\setminus\sX_0})
\simeq(X\times(\A^1\setminus\{0\}), p_1^*L)$, where 
$\G_m\curvearrowright(X\times(\A^1\setminus\{0\}), p_1^*L)$ is acting trivially along 
$(X, L)$.
\end{itemize}
\item\label{tc_dfn2}
Let $(\sX, \sL)/\A^1$ be a test configuration of $(X, L)$. 
\begin{enumerate}
\renewcommand{\theenumii}{\roman{enumii}}
\renewcommand{\labelenumii}{(\theenumii)}
\item\label{tc_dfn21}
Let $(\bar{\sX}, \bar{\sL})/\pr^1$ be the compactification of $(\sX, \sL)/\A^1$ gluing 
$(\sX, \sL)/\A^1$ and $(X\times(\pr^1\setminus\{0\}), p_1^*L)/(\pr^1\setminus\{0\})$ 
along the above isomorphism $(\sX\setminus\sX_0, \sL|_{\sX\setminus\sX_0})
\simeq(X\times(\A^1\setminus\{0\}), p_1^*L)$.
\item\label{tc_dfn22}
For any $\Q$-divisor $\Gamma$ on $X$, let $\Gamma_{\sX}$ (resp., 
$\Gamma_{\bar{\sX}}$) be the closure of $\Gamma\times(\A^1\setminus\{0\})$ on 
$\sX$ (resp., on $\bar{\sX}$).
\item\label{tc_dfn23}
$(\sX, \sL)/\A^1$ is said to be \emph{trivial} (resp., \emph{a product-type test 
configuration of $((X, \Delta), L)$}) if $(\sX, \sL)/\A^1$ is $\G_m$-equivalently isomorphic 
to $(X\times\A^1, p_1^*L)$ with the natural $\G_m$-action (resp., if $(\sX, \Delta_{\sX})$ 
is isomorphic to $(X\times\A^1, \Delta\times\A^1)$).
\end{enumerate}
\end{enumerate}
\end{definition}

\begin{definition}\label{DF_dfn}
Let $(\sX, \sL)/\A^1$ be a test configuration of $(X, L)$. 
\begin{enumerate}
\renewcommand{\theenumi}{\arabic{enumi}}
\renewcommand{\labelenumi}{(\theenumi)}
\item\label{DF_dfn1}\cite{wang, odk, LX, BHJ}
The \emph{Donaldson-Futaki invariant} $\DF_\Delta(\sX, \sL)$ is defined to be: 
\[
\DF_\Delta(\sX, \sL):=\frac{1}{(L^{\cdot n})}\left(\frac{n}{n+1}
\left(\bar{\sL}^{\cdot n+1}\right)+\left(\bar{\sL}^{\cdot n}\cdot 
\left(K_{\bar{\sX}/\pr^1}+\Delta_{\bar{\sX}}\right)\right)\right).
\]
\item\label{DF_dfn2}\cite{BHJ} (see also \cite{dervan})
Let 
\[\xymatrix{
& \bar{\sY}  \ar[dl]_\Theta \ar[dr]^\Pi & \\
\bar{\sX} & & X\times\pr^1
}\]
be the normalization of the graph of the natural birational map 
$\bar{\sX}\dashrightarrow X\times\pr^1$. We set 
\[
J^{\NA}(\sX, \sL):=\frac{1}{(L^{\cdot n})}\left(\left(
\Pi^*p_1^*L^{\cdot n}\cdot\Theta^*\bar{\sL}\right)-\frac{1}{n+1}
\left(\bar{\sL}^{\cdot n+1}\right)\right).
\]
\end{enumerate}
\end{definition}

\begin{definition}\label{stability_dfn}
\begin{enumerate}
\renewcommand{\theenumi}{\arabic{enumi}}
\renewcommand{\labelenumi}{(\theenumi)}
\item\label{stability_dfn1}
$(X, \Delta)$ is said to be \emph{K-semistable} (resp., \emph{K-stable}) 
if $\DF_\Delta(\sX, \sL)\geq 0$ (resp., $\DF_\Delta(\sX, \sL)>0$) holds for 
any nontrivial test configuration $(\sX, \sL)/\A^1$ of $(X, L)$. 
\item\label{stability_dfn2}
$(X, \Delta)$ is said to be \emph{uniformly K-stable} 
if there exists $\delta\in(0,1)$ such that 
$\DF_\Delta(\sX, \sL)\geq \delta\cdot J^{\NA}(\sX, \sL)$ holds for 
any test configuration $(\sX, \sL)/\A^1$ of $(X, L)$. 
\item\label{stability_dfn3}
$(X, \Delta)$ is said to be \emph{K-polystable} 
if $(X, \Delta)$ is K-semistable and 
$\DF_\Delta(\sX, \sL)=0$ holds for 
a test configuration $(\sX, \sL)/\A^1$ of $(X, L)$ only if $(\sX, \sL)/\A^1$ is a 
product-type test configuration of $((X, \Delta), L)$. 
\end{enumerate}
\end{definition}

\begin{remark}\label{K_dfn_rmk}
By \cite[Theorem 1.3]{dervan} and \cite[Corollary B]{BHJ}, uniform K-stability implies 
K-stability. Moreover, it is obvious from the definition that K-stability implies 
K-polystability, and K-polystability implies K-semistability. 
\end{remark}

The following theorem of Li and Xu 
is important for the study of K-stability of log Fano pairs. 

\begin{thm}[{\cite[Theorem 7]{LX}, see also \cite[\S 6]{fjt}}]\label{LX_thm}
In order to check those stability conditions in Definition \ref{stability_dfn}, it is enough to 
consider test configurations $(\sX, \sL)/\A^1$ with $\sX_0$ integral. 
\end{thm}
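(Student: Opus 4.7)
My plan is to follow the MMP-based reduction of Li-Xu. Given any nontrivial test configuration $(\sX,\sL)/\A^1$ of $(X,L)$ whose central fiber $\sX_0$ is possibly non-integral, I would produce a new test configuration $(\sX',\sL')/\A^1$ with $\sX'_0$ integral such that $\DF_\Delta(\sX',\sL') \leq \DF_\Delta(\sX,\sL)$ and $J^{\NA}(\sX',\sL')$ is controlled comparably. The K-semistability and uniform K-stability assertions then follow at once, while the K-stability and K-polystability parts require additional bookkeeping on the triviality and product-type conditions.

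First I would perform semistable reduction: after a base change $t\mapsto t^d$ and normalization, one may assume that $\sX_0$ is reduced and that the compactified pair $(\bar{\sX},\Delta_{\bar{\sX}}+\sX_0)$ is log canonical. This base change multiplies both $\DF_\Delta$ and $J^{\NA}$ by $d$, preserving all four stability conditions. Next, over $\pr^1$, I would run a relative $(K_{\bar{\sX}}+\Delta_{\bar{\sX}}+\sX_0)$-MMP with scaling of $-\bar{\sL}$, which terminates by \cite{BCHM} since $-\bar{\sL}-\epsilon(K_{\bar{\sX}}+\Delta_{\bar{\sX}}+\sX_0)$ is relatively big for small $\epsilon$. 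Because $-(K_X+\Delta)$ is ample on the general fiber, the output is a relative log Fano model whose central fiber is a $\Q$-Fano variety, hence integral; this gives the required $(\sX',\sL')$.

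The core technical task is to track the invariants along each MMP step (a divisorial contraction or a flip) and the intermediate rescalings of $\bar{\sL}$. A direct intersection-theoretic calculation, as in \cite{LX}, shows that $\DF_\Delta$ is non-increasing, with equality only at trivial steps, and that the modification preserves non-triviality. For uniform K-stability, one checks in parallel that the ratio $\DF_\Delta/J^{\NA}$ is controlled, so that the uniform inequality descends from integral-central-fiber test configurations to arbitrary ones. The main obstacle is the K-polystability case: if $(\sX,\sL)$ is non-product-type with $\DF_\Delta(\sX,\sL)=0$, I need to ensure that the produced $(\sX',\sL')$ is still non-product-type (and still has $\DF_\Delta=0$), so that the hypothesis yields a contradiction. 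This requires a rigidity argument tracing the $\G_m$-equivariant structure through each contraction and flip, which is the deepest ingredient of the reduction.
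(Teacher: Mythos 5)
This theorem is not proved in the paper at all: it is quoted from \cite[Theorem 7]{LX} and \cite[\S 6]{fjt}, and your plan (equivariant semistable reduction, relative MMP with scaling over the compactified family, monotonicity of $\DF_\Delta$ with an equality analysis, plus parallel control of $J^{\NA}$ for the uniform case) is precisely the strategy of those cited proofs. The two ingredients you explicitly defer --- the rigidity/equality statement needed to handle K-stability and K-polystability, and the comparison of $J^{\NA}$ through the MMP steps --- are exactly what \cite{LX} and \cite{fjt}, \cite{BHJ} supply, so your outline matches the intended argument rather than offering a genuinely different route.
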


We recall the following lemma. 

\begin{lemma}[{\cite[Lemma 2.2]{B}}]\label{B_lemma}
Let $(\sX, \sL)/\A^1$ be a test configuration of $(X, L)$ with $\sX_0$ integral. 
Then $\sL\sim_\Q-(K_{\sX/\A^1}+\Delta_{\sX})$.
\end{lemma}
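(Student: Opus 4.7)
The plan is to compare $\sL$ and $-(K_{\sX/\A^1}+\Delta_{\sX})$ as $\Q$-line bundles on $\sX$ by working on the open set $\sX\setminus\sX_0$, where they agree by construction, and then showing that the possible discrepancy supported on $\sX_0$ vanishes $\Q$-linearly thanks to the integrality assumption.

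First, I would unwind the definition of a test configuration on the open set $\sX\setminus\sX_0$, using the $\G_m$-equivariant identification with $(X\times(\A^1\setminus\{0\}),p_1^*L)$. Under this identification the relative canonical $K_{\sX/\A^1}$ restricts to $p_1^*K_X$, and by Definition \ref{tc_dfn}\eqref{tc_dfn22} the divisor $\Delta_{\sX}$ restricts to $p_1^*\Delta$. Since $L=-(K_X+\Delta)$ by assumption, this immediately gives
\[
\bigl(\sL+K_{\sX/\A^1}+\Delta_{\sX}\bigr)\big|_{\sX\setminus\sX_0}\sim_\Q 0,
\]
so $\sL+K_{\sX/\A^1}+\Delta_{\sX}$ is $\Q$-linearly equivalent on $\sX$ to a $\Q$-divisor supported on $\sX_0$.

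Second, the integrality assumption on $\sX_0$ enters decisively: as $\sX_0$ is an integral scheme, its underlying Weil divisor is a single prime divisor on $\sX$, so the discrepancy must be of the form $c\cdot\sX_0$ for a single constant $c\in\Q$. Finally, since $\sX_0$ is the scheme-theoretic pullback to $\sX$ of the principal divisor $\{0\}\subset\A^1$ (cut out by the coordinate $t$), it is itself a principal Cartier divisor on $\sX$ and hence $\Q$-linearly trivial; consequently $c\cdot\sX_0\sim_\Q 0$, and the claim follows.

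The main (and essentially only) subtle point is keeping track of the role of the integrality hypothesis. If $\sX_0=\sum a_iD_i$ were reducible, the discrepancy could be any $\sum c_iD_i$, which need not be proportional to the principal divisor $\sX_0$, so no $\Q$-linear cancellation is forced. This is exactly why the lemma is stated under the integrality assumption, and why it dovetails cleanly with Theorem \ref{LX_thm} when checking K-stability.
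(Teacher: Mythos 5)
Your proof is correct and follows essentially the same route as the paper's: identify $\sL+K_{\sX/\A^1}+\Delta_{\sX}$ as $\Q$-linearly trivial away from $\sX_0$, use integrality to force the discrepancy to be a rational multiple of the prime divisor $\sX_0$, and conclude. The only difference is that you make explicit the final step (that $\sX_0$, being the pullback of the principal divisor $\DIV(t)$ on $\A^1$, is itself $\Q$-linearly trivial), which the paper leaves implicit.
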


\begin{proof}
For any test configuration $(\sX, \sL)/\A^1$ of $(X, L)$, we can take a $\Q$-divisor 
$D$ supported on $\Supp\sX_0$ such that 
$D\sim_\Q-\sL-(K_{\sX/\A^1}+\Delta_{\sX})$. If $\sX_0$ is integral, then $D$ must be  
some multiple of $\sX_0$. 
\end{proof}

\subsection{A valuative criterion}\label{vst_section}

We recall an interpretation for uniform K-stability, K-semistability of log Fano pairs  
introduced in \cite{li, fjt, fjt17}. 

\begin{definition}[{see \cite{fjt17}}]\label{vol_dfn}
Let $F$ be a prime divisor 
over $X$. We fix a projective birational morphism $f\colon\tilde{X}\to X$ with 
$\tilde{X}$ normal such that $F$ is realized as a prime divisor on $\tilde{X}$. 
\begin{enumerate}
\renewcommand{\theenumi}{\arabic{enumi}}
\renewcommand{\labelenumi}{(\theenumi)}
\item\label{vol_dfn1}
We set $A_{(X, \Delta)}(F):=\ord_F(K_{\tilde{X}}-f^*(K_X+\Delta))+1$. 
(Of course, we can define $A_{(X, \Delta)}(F)$ not only for a log Fano pair $(X, \Delta)$ but also 
for any log pair $(X, \Delta)$ with $K_X+\Delta$ $\Q$-Cartier.)
\item\label{vol_dfn2}
For any $r\in\Z_{>0}$ with $rL$ Cartier and for any $k\in\Z_{\geq 0}$, we set 
\[
V_k^r:=H^0(X, krL).
\]
Moreover, for any $j\in\R$, we define the $\C$-vector subspace $\sF_F^jV_k^r$ of 
$V_k^r$ as 
\[
\sF_F^jV_k^r:=\begin{cases}
H^0(\tilde{X}, f^*(krL)+\lfloor -jF\rfloor) & \text{if } j\geq 0,\\
V_k^r & \text{if } j<0.
\end{cases}\]
The definition is independent of the choice of the morphism $f$. 
\item\label{vol_dfn3}
For any $x\in\R_{\geq 0}$, we set 
\[
\vol_X(L-xF):=\vol_{\tilde{X}}(f^*L-xF).
\]
(For the definition of $\vol_{\tilde{X}}$, see \cite{L1, L2}.) We know that 
$\vol_X(L-xF)$ is continuous and non-increasing over $x\in[0,+\infty)$. 
Moreover, if $x\gg 0$, then $\vol_X(L-xF)$ is identically equal to zero since 
$f^*L-xF$ is not pseudo-effective for $x\gg 0$.
\item\label{vol_dfn4}
We set 
\[
\hat{\beta}_{(X, \Delta)}(F):=1-\frac{\int_0^\infty\vol_X(L-xF)dx}
{A_{(X, \Delta)}(F)\cdot (L^{\cdot n})}.
\]
\item\label{vol_dfn5}
$F$ is said to be \emph{dreamy over} $(X, \Delta)$ if the graded $\C$-algebra 
\[
\bigoplus_{k,j\in\Z_{\geq 0}}\sF_F^jV_k^r
\]
is finitely generated for some (hence, for any) $r\in\Z_{>0}$ with $rL$ Cartier. 
\end{enumerate}
We note that the definition of $\hat{\beta}_{(X, \Delta)}(F)$ is independent of 
the choice of the morphism $f$. 
\end{definition}

We use the following valuative criterion for uniform K-stability, K-semistability 
of log Fano pairs in order to prove Theorem \ref{mainthm}. 

\begin{thm}\label{beta_thm}
\begin{enumerate}
\renewcommand{\theenumi}{\arabic{enumi}}
\renewcommand{\labelenumi}{(\theenumi)}
\item\label{beta_thm1}
$($\cite{li, fjt}$)$
The following are equivalent: 
\begin{enumerate}
\renewcommand{\theenumii}{\roman{enumii}}
\renewcommand{\labelenumii}{(\theenumii)}
\item\label{beta_thm11}
$(X, \Delta)$ is K-semistable, 
\item\label{beta_thm12}
$\hat{\beta}_{(X, \Delta)}(F)\geq 0$ holds for any prime divisor $F$ over $X$, 
\item\label{beta_thm13}
$\hat{\beta}_{(X, \Delta)}(F)\geq 0$ holds for any dreamy prime divisor $F$ 
over $(X,\Delta)$. 
\end{enumerate}
\item\label{beta_thm2}
$($\cite{fjt, fjt17}$)$
The following are equivalent: 
\begin{enumerate}
\renewcommand{\theenumii}{\roman{enumii}}
\renewcommand{\labelenumii}{(\theenumii)}
\item\label{beta_thm21}
$(X, \Delta)$ is uniformly K-stable, 
\item\label{beta_thm22}
there exists $\varepsilon>0$ such that $\hat{\beta}_{(X, \Delta)}(F)\geq \varepsilon$ holds 
for any prime divisor $F$ over $X$, 
\item\label{beta_thm23}
there exists $\varepsilon>0$ such that $\hat{\beta}_{(X, \Delta)}(F)\geq \varepsilon$ holds 
for any dreamy prime divisor $F$ over $(X, \Delta)$. 
\end{enumerate}
\end{enumerate}
\end{thm}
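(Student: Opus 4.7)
The plan is to set up a two-way correspondence between dreamy prime divisors over $(X,\Delta)$ and test configurations of $(X,L)$ with integral central fiber, and then to verify that under this correspondence the Donaldson-Futaki invariant matches $\hat\beta_{(X,\Delta)}(F)$ up to a positive normalizing factor, and $J^{\NA}$ matches a corresponding positive multiple of $\int_0^\infty\vol_X(L-xF)\,dx$; this converts the stability inequalities of Definition \ref{stability_dfn} directly into valuative ones.

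From a dreamy prime divisor $F$ I would first produce a test configuration via a Rees-type construction. Fix $r\in\Z_{>0}$ with $rL$ Cartier, and form the finitely generated $\C[t]$-subalgebra
\[
\bigoplus_{k\in\Z_{\geq 0}}\bigoplus_{j\in\Z}t^{-j}\,\sF_F^jV_k^r
\]
of $\bigl(\bigoplus_{k\in\Z_{\geq 0}}V_k^r\bigr)\otimes_\C\C[t,t^{-1}]$; finite generation is exactly the dreamy assumption. Its relative Proj over $\A^1=\Spec\C[t]$, together with the tautological $\Q$-ample line bundle, is a test configuration $(\sX,\sL)/\A^1$ of $(X,L)$ with $\sX_0$ integral, whose $\G_m$-invariant divisorial valuation on $K(\sX)=K(X)(t)$ restricts, modulo the $t$-direction, to a positive multiple of $\ord_F$. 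Using Lemma \ref{B_lemma} to write $\sL\sim_\Q-(K_{\sX/\A^1}+\Delta_{\sX})$, unwinding Definition \ref{DF_dfn} and applying asymptotic Riemann-Roch to the graded pieces $\sF_F^jV_k^r/\sF_F^{j+1}V_k^r$, I would derive identities of the form $\DF_\Delta(\sX,\sL)=c\cdot\hat\beta_{(X,\Delta)}(F)$ and an analogous one expressing $J^{\NA}(\sX,\sL)$ as a positive multiple of $(L^{\cdot n})^{-1}\int_0^\infty\vol_X(L-xF)\,dx$, with $c>0$ an explicit normalizing constant.

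For the converse, Theorem \ref{LX_thm} reduces the check of any of the stability conditions to test configurations with integral central fiber. Each such $\sX_0$ yields a $\G_m$-invariant divisorial valuation on $K(\sX)$; after subtracting the trivial $t$-contribution, its restriction to $K(X)$ is a divisorial valuation represented by a prime divisor $F$ over $X$, which is automatically dreamy since the corresponding filtered graded algebra is recovered from the homogeneous coordinate ring of $(\sX,\sL)$. Composed with the computation above, this produces a direct bijection realizing the two sides of (1): K-semistability transcribes exactly into $\hat\beta_{(X,\Delta)}(F)\geq 0$ for every dreamy $F$, giving (i)$\Leftrightarrow$(iii). The implication (ii)$\Rightarrow$(iii) is trivial, and (iii)$\Rightarrow$(ii) follows by a standard approximation: a general prime divisor over $X$ can be approached in the limit by the dreamy divisors arising from truncating the filtration $\{\sF_F^\bullet V_k^r\}$ at each level, and one checks that $\hat\beta_{(X,\Delta)}$ is continuous under this approximation. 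Part (2) runs on the same blueprint with $\DF_\Delta$ replaced by $\DF_\Delta-\delta\,J^{\NA}$, so that uniform K-stability becomes equivalent to a uniform positive lower bound $\hat\beta_{(X,\Delta)}(F)\geq\varepsilon$.

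The hard part, and the technical core of \cite{li, fjt, fjt17}, is the asymptotic intersection-theoretic computation that converts the definitions of $\DF_\Delta$ and $J^{\NA}$ into the volume integral $\int_0^\infty\vol_X(L-xF)\,dx$; concretely one must show that as $k\to\infty$ the suitably weighted dimensions of $\sF_F^jV_k^r/\sF_F^{j+1}V_k^r$ assemble into this integral. This rests on Okounkov-body techniques and the asymptotic volume formalism of \cite{L1, L2}, and is made more delicate in part (2) by the need to track the uniform constant $\varepsilon>0$ through the passage between general prime divisors and their dreamy approximations without losing positivity.
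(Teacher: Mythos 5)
The first thing to note is that the paper does not prove Theorem \ref{beta_thm} at all: it is quoted from \cite{li, fjt, fjt17}, and only the machinery paralleling the first half of your proposal appears in the paper, in \S \ref{dreamy_section} (Definition \ref{drtc_dfn}, Lemma \ref{normal_lemma}, Theorem \ref{integral_thm}), where it is used to prove Theorem \ref{stps_thm} rather than Theorem \ref{beta_thm}. That part of your proposal is sound and matches the paper's setup: the Rees construction $\Proj_{\A^1}\bigoplus_k\bigoplus_j t^{-j}\sF_F^jV_k^r$ applied to a dreamy $F$ gives a test configuration with integral central fiber, conversely $v_{\sX_0}$ restricted to $\C(X)^*$ is $c\cdot\ord_F$ for a dreamy $F$, and $\DF_\Delta(\sX,\sL)=c\cdot\hat{\beta}_{(X,\Delta)}(F)\cdot A_{(X,\Delta)}(F)$; combined with Theorem \ref{LX_thm} this correctly yields (i)$\Leftrightarrow$(iii) in both parts.

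The genuine gap is your step (iii)$\Rightarrow$(ii). Truncating the filtration $\sF_F^\bullet V_\bullet^r$ of a non-dreamy prime divisor $F$ produces finitely generated \emph{filtrations}, and hence (via Rees/Proj) test configurations, but these test configurations have in general \emph{non-integral} central fibers: they do not correspond to dreamy prime divisors, so there is no sequence of dreamy divisors approximating $F$, and no continuity statement for $\hat{\beta}_{(X,\Delta)}$ along such an approximation is available. This is why the cited works route the hard implication as (i)$\Rightarrow$(ii): assuming K-semistability one has $\DF_\Delta\geq 0$ on each truncated test configuration, and the technical core (the Okounkov-body/Duistermaat--Heckman asymptotics you allude to, following Boucksom--Chen) shows that these Donaldson--Futaki invariants converge, after normalization, to $A_{(X,\Delta)}(F)\cdot\hat{\beta}_{(X,\Delta)}(F)$; the approximation must pass through $\DF_\Delta$ and the hypothesis of K-semistability, not through values of $\hat{\beta}$ at dreamy divisors, so hypothesis (iii) alone cannot feed it. A secondary inaccuracy: $J^{\NA}$ of the test configuration attached to $F$ is \emph{not} a positive multiple of $(L^{\cdot n})^{-1}\int_0^\infty\vol_X(L-xF)dx$; up to normalization it equals $\tau(F)-(L^{\cdot n})^{-1}\int_0^\infty\vol_X(L-xF)dx$ with $\tau(F)$ the pseudo-effective threshold, and part (2) needs a two-sided comparison between this quantity and $A_{(X,\Delta)}(F)\cdot\hat{\beta}_{(X,\Delta)}(F)$ to convert the uniform constant $\delta$ into $\varepsilon$ and back; saying part (2) ``runs on the same blueprint'' elides exactly this bookkeeping.
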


Moreover, in Theorem \ref{stps_thm}, we give a valuative criterion for \emph{K-stability}, 
\emph{K-polystability} of log Fano pairs.

\section{Dreamy prime divisors}\label{dreamy_section}

\subsection{Divisorial valuations}\label{divisorial_section}

\begin{definition}\label{divisorial_dfn}
Let $X$ be an $n$-dimensional normal variety with the function field $\C(X)$. 
A \emph{divisorial valuation} on $X$ is a group homomorphism $c\cdot\ord_F\colon 
\C(X)^*\to(\Q, +)$ with $c\in\Q_{>0}$ and $F$ a prime divisor over $X$. For any 
$\Q$-divisor $\Delta$ on $X$ with $K_X+\Delta$ $\Q$-Cartier, we set 
$A_{(X, \Delta)}(c\cdot\ord_F):=c\cdot A_{(X, \Delta)}(F)$. 
\end{definition}

\begin{example}[{see \cite{JM} for example}]\label{q-monomial_ex}
Let us set $X:=\A^n_{x_1,\dots,x_n}$ and take 
$\vec{a}=(a_1,\dots,a_n)\in\Z^n_{\geq 0}\setminus\{(0,\dots,0)\}$. 
For any 
\[
f=\sum_{\vec{\alpha}\in\Z_{\geq 0}^n}f_{\vec{\alpha}}x^{\vec{\alpha}}\in\C[x_1,\dots,x_n]
\setminus\{0\}
\]
with $f_{\vec{\alpha}}\in\C$, let us set 
\[
v(f):=\min\{(\vec{\alpha}\cdot \vec{a})\,|\, f_{\vec{\alpha}}\neq 0\}, 
\]
and we naturally extend $v\colon \C(x_1,\dots,x_n)^*\to\Z$ as a valuation. 
We call the $v$ the \emph{quasi-monomial valuation on} $X$ \emph{for coordinates} 
$(x_1,\dots, x_n)$ \emph{with weights} $(a_1,\dots, a_n)$. 
It is well-known that the valuation $v$ is a divisorial 
valuation on $X$. 
In fact, if $\gcd(a_1,\dots,a_n)=1$, then $v$ is equal to $\ord_F$, where $F$ is 
the exceptional divisor of the weighted blowup of $X$ for coordinates 
$(x_1,\dots,x_n)$ with weights $(a_1,\dots,a_n)$. 
\end{example}

From now on, let $(X, \Delta)$ be a log Fano pair and we set $L:=-(K_X+\Delta)$. 
We recall that a test configuration $(\sX, \sL)/\A^1$ of $(X, L)$ with $\sX_0$ integral 
induces a divisorial valuation on $X$. 

\begin{thm}[{see \cite[Theorem 6.13 and Claim 4]{fjt}}]\label{integral_thm}
Let $(\sX, \sL)/\A^1$ be a nontrivial test configuration of $(X, L)$ with $\sX_0$ integral. 
Take any $r\in\Z_{>0}$ with $r\sL$Cartier. 
\begin{enumerate}
\renewcommand{\theenumi}{\arabic{enumi}}
\renewcommand{\labelenumi}{(\theenumi)}
\item\label{integral_thm1}
In this situation, it is obvious that $\C(\sX)=\C(X)(t)$. Let us consider the restriction 
$v_{\sX_0}\colon\C(X)^*\to\Z$ 
of $\ord_{\sX_0}\colon \C(X)(t)^*\to\Z$ to $\C(X)^*\subset\C(X)(t)^*$. 
Then there exist $c\in\Z_{>0}$ and a dreamy prime divisor $F$ over $(X, \Delta)$ 
such that $v_{\sX_0}=c\cdot\ord_F$ holds. $($In particular, $v_{\sX_0}$ is a divisorial 
valuation on $X$.$)$ 
\item\label{integral_thm2}
We have 
\[
\DF_\Delta(\sX, \sL)=c\cdot\hat{\beta}_{(X, \Delta)}(F)\cdot A_{(X, \Delta)}(F).
\]
\item\label{integral_thm3}$($see also \cite[Proposition 2.15]{BHJ}$)$
There exists $d\in\Q$ with $dr\in\Z$ such that, for any $k\in\Z_{\geq 0}$, we have 
\[
H^0(\sX, kr\sL)=\bigoplus_{j\in\Z}t^{-j}\cdot\sF_F^{\frac{dkr+j}{c}}V_k^r
\]
as a $\C[t]$-module. 
\end{enumerate}
\end{thm}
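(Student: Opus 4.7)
The plan is to prove the three parts in sequence, via the standard dictionary between test configurations with integral central fibre and divisorial valuations on $X$.

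For part \eqref{integral_thm1}, since $\sX$ is normal and $\sX_0$ is integral, $\ord_{\sX_0}$ is a $\Z$-valued valuation on $\C(\sX)^*=\C(X)(t)^*$. Its restriction $v_{\sX_0}$ to $\C(X)^*$ has image $c\Z$ for some $c\in\Z_{>0}$ and is nontrivial, for otherwise a uniformiser of $\sO_{\sX,\sX_0}$ and $t$ would differ by a unit near $\sX_0$, forcing $(\sX,\sL)/\A^1$ to be trivial. By the Abhyankar inequality (rational rank one, transcendence degree $n$), $c^{-1}v_{\sX_0}=\ord_F$ for a prime divisor $F$ over $X$; dreaminess of $F$ will be obtained from part \eqref{integral_thm3}.

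For part \eqref{integral_thm3}, the $\G_m$-equivariant trivialisation outside $\sX_0$ gives
\[
H^0(\sX\setminus\sX_0,kr\sL)=\bigoplus_{j\in\Z}t^{-j}V_k^r
\]
as a $\C[t,t^{-1}]$-module, with $\G_m$-weight $-j$ on $t^{-j}V_k^r$. Since $\sL-\overline{p_1^*L}$ is supported on the prime divisor $\sX_0$ and $\sL$ is $\Q$-Cartier, there is a unique $d\in\Q$ with $dr\in\Z$ and $\sL\sim_\Q\overline{p_1^*L}-d\cdot\sX_0$. A direct computation using $v_{\sX_0}(t)=1$ together with $v_{\sX_0}|_{\C(X)^*}=c\cdot\ord_F$ then shows that $t^{-j}s$ with $s\in V_k^r$ extends to a regular section of $kr\sL$ on $\sX$ iff $s\in\sF_F^{(dkr+j)/c}V_k^r$. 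Since $\sL$ is ample over $\A^1$, $\bigoplus_k H^0(\sX,kr\sL)$ is finitely generated as a $\C[t]$-algebra, so the bigraded ring $\bigoplus_{k,j}\sF_F^jV_k^r$ is finitely generated and $F$ is dreamy over $(X,\Delta)$.

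For part \eqref{integral_thm2}, use Lemma \ref{B_lemma} to rewrite $\sL\sim_\Q-(K_{\sX/\A^1}+\Delta_\sX)$ and expand the intersection numbers in Definition \ref{DF_dfn} via equivariant Riemann--Roch. The top-order contributions are captured by the total $\G_m$-weight on $H^0(\sX_0,kr\sL|_{\sX_0})$, which by \eqref{integral_thm3} has an asymptotic expansion controlled by $\dim\sF_F^\bullet V_k^r$; the Duistermaat--Heckman formula then identifies the leading coefficient with $\int_0^\infty\vol_X(L-xF)\,dx$. Combined with the log-discrepancy identity $A_{(\sX,\Delta_\sX)}(\sX_0)=c\cdot A_{(X,\Delta)}(F)$, read off from $v_{\sX_0}=c\cdot\ord_F$ and from the comparison of $K_{\sX/\A^1}+\Delta_\sX$ with the pullback of $K_X+\Delta$ from $X\times\A^1$, one obtains $\DF_\Delta(\sX,\sL)=c\cdot\hat{\beta}_{(X,\Delta)}(F)\cdot A_{(X,\Delta)}(F)$.

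The main obstacle is pinning down the shift $d$ and verifying the log-discrepancy identity $A_{(\sX,\Delta_\sX)}(\sX_0)=c\cdot A_{(X,\Delta)}(F)$: both hinge on a careful comparison of $\sL$ and $K_\sX+\Delta_\sX$ with their only generically defined pullbacks from $X\times\A^1$ along $\sX_0$, and on tracking the factor $c$ arising from the ramification of $\C(X)\hookrightarrow\C(X)(t)$ against $\ord_{\sX_0}$. Once these identifications are fixed, the remaining asymptotic Riemann--Roch is routine.
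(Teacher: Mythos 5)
Your proposal does not follow the paper's own proof, which is a two-line reduction: by Lemma \ref{B_lemma} one writes $\sL=-(K_{\sX/\A^1}+\Delta_{\sX})-d_0\sX_0$, and then all three assertions are quoted from \cite[Theorem 6.13 and Claim 4]{fjt} and \cite[\S 2.5]{BHJ}. You instead rebuild the cited machinery, which is legitimate, and for parts (1) and (3) your outline is essentially the intended one: the Abhyankar argument (which should be run through the extension $\C(X)\subset\C(X)(t)$, forcing the residue field of $v_{\sX_0}$ to have transcendence degree exactly $n-1$, not $n$), the weight decomposition of $H^0(\sX\setminus\sX_0,kr\sL)$ with the order computation along $\sX_0$, and dreaminess from finite generation of $\bigoplus_k H^0(\sX,kr\sL)$. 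Two imprecisions there are real but fixable: since $\sX_0=\DIV(t)$ is principal, the condition $\sL\sim_\Q\overline{p_1^*L}-d\sX_0$ determines no $d$ at all, so you must fix divisor representatives agreeing off $\sX_0$ and demand equality of $\Q$-divisors; and triviality of $v_{\sX_0}$ only makes $\sX\dashrightarrow X\times\A^1$ an isomorphism at the generic point of $\sX_0$, so concluding triviality of the test configuration still requires an argument (e.g.\ that this small $\G_m$-equivariant birational map identifies the two relatively ample polarizations and hence the relative $\Proj$ of their section algebras).

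The genuine gap is in part (2), precisely at the step you yourself flag as the main obstacle. The identity $A_{(\sX,\Delta_{\sX})}(\sX_0)=c\cdot A_{(X,\Delta)}(F)$ is false as written: $\sX_0$ is a prime divisor on $\sX$ itself and no component of $\Delta_{\sX}$ lies in a fibre, so by definition $A_{(\sX,\Delta_{\sX})}(\sX_0)=1-\ord_{\sX_0}(\Delta_{\sX})=1$. The identity the Donaldson--Futaki computation actually needs lives on $X\times\A^1$, not on $\sX$: regard $\ord_{\sX_0}$ as a divisorial valuation on $X\times\A^1$ via the birational map $\sX\dashrightarrow X\times\A^1$, note that it is the Gauss extension of $v_{\sX_0}=c\cdot\ord_F$ (it is $\G_m$-invariant and $\ord_{\sX_0}(t)=1$ since $\sX_0$ is reduced), and then $A_{(X\times\A^1,\,\Delta\times\A^1+X\times\{0\})}(\ord_{\sX_0})=c\cdot A_{(X,\Delta)}(F)$, equivalently $A_{(X\times\A^1,\,\Delta\times\A^1)}(\ord_{\sX_0})=c\cdot A_{(X,\Delta)}(F)+1$; see \cite[\S 4]{BHJ}. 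With the identity placed on the pair $(\sX,\Delta_{\sX})$ there is nothing to compare against $K_{\bar{\sX}/\pr^1}+\Delta_{\bar{\sX}}$ on a common resolution, and the asserted formula $\DF_\Delta(\sX,\sL)=c\cdot\hat{\beta}_{(X,\Delta)}(F)\cdot A_{(X,\Delta)}(F)$ cannot be extracted; so part (2) as proposed does not close. Once the identity is replaced by its Gauss-extension form, your equivariant Riemann--Roch/Duistermaat--Heckman outline is exactly the route of \cite{BHJ}, i.e.\ of the results the paper quotes.
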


\begin{proof}
By Lemma \ref{B_lemma}, there exists $d_0\in\Q$ such that 
$\sL=-(K_{\sX/\A^1}+\Delta_{\sX})-d_0\sX_0$ holds. 
Thus the assertion immediately follows 
from \cite[Theorem 6.13 and Claim 4]{fjt} and \cite[\S 2.5]{BHJ}. 
\end{proof}

\begin{definition}\label{aut_dfn}
We set 
\[
\Aut(X, \Delta; L):=\{\sigma\in\Aut(X; L)\,|\, \sigma_*\Delta=\Delta\}\subset\Aut(X; L)
\]
as in \cite[Proposition 4.6]{ambro}. For any $\sigma\in\Aut(X)$ with 
$\sigma_*\Delta=\Delta$, we have 
$\sigma^*L\simeq L$. Thus we write $\Aut(X, \Delta):=\Aut(X, \Delta; L)$. 
\end{definition}

\begin{example}\label{aut_ex}
Let $\rho\colon\G_m\times X\to X$ with $(t,x)\mapsto \rho_t(x)$ be a one-parameter 
subgroup of $\Aut(X; L)$ (that is, $\rho$ is given by a nontrivial morphism 
$\rho\colon \G_m\to\Aut(X; L)$ in the category of algebraic groups). 
Then, as in \cite[Example 2.3]{BHJ}, the diagonal action $\G_m\curvearrowright 
(X, L)\times\A^1$ gives a test configuration $(\sX, \sL)/\A^1$ of $(X, L)$. 
In particular, if $\rho$ is a one-parameter subgroup of $\Aut(X, \Delta)$, then 
this is a product-type test configuration of $((X, \Delta), L)$. Conversely, any 
nontrivial product test configuration of $((X, \Delta), L)$ is recovered from 
some one parameter subgroup $\G_m\to \Aut(X, \Delta)$. 
See \cite[Proposition 3.7]{RT} for example.

Let 
\[
\rho^*\colon \C(X)\to \C(X)(t)
\]
be the field extension obtained by the morphism $\rho$. 
Since the $\G_m$-equivariant 
isomorphism 
\[
\phi\colon(\sX\setminus\sX_0, \sL|_{\sX\setminus\sX_0})
\to(X\times(\A^1\setminus\{0\}), p_1^*L)
\]
is given by 
\begin{eqnarray*}
\phi\colon X\times (\A^1\setminus\{0\}) & \to & X\times (\A^1\setminus\{0\})\\ 
(x,t) & \mapsto &(\rho_{t^{-1}}(x),t), 
\end{eqnarray*}
the divisorial valuation $v_{\sX_0}\colon \C(X)^*\to\Z$ is given by 
\[
v_{\sX_0}(f)=\ord_{(t^{-1})}\rho^*f
\]
for any $f\in\C(X)^*$.
\end{example}

\begin{example}\label{aut_P_ex}
We consider a special case of Example \ref{aut_ex}. 
We assume that $X$ is equal to $\pr^n$ with homogeneous coordinates 
$z_0:\cdots:z_n$. Then $\Aut(X)=\Aut(X; L)=\PGL(n+1)$. 
Take any $(a_1,\dots,a_n)\in\Z_{\geq 0}^n\setminus\{(0,\dots,0)\}$. 
Let us consider the one-parameter subgroup 
\begin{eqnarray*}
\rho\colon \G_m & \to & \PGL(n+1) \\
t & \mapsto & \diag(1, t^{-a_1},\dots,t^{-a_n}).
\end{eqnarray*}
More precisely, the morphism $\rho\colon \G_m\times X\to X$ is given by 
\[
(t; z_0:\cdots:z_n)\mapsto (z_0:t^{-a_1}z_1:\cdots:t^{-a_n}z_n).
\]
Set $x_i:=z_i/z_0$. Then 
\[
\rho^*\colon \C(x_1,\dots,x_n)\to\C(x_1,\dots,x_n)(t)
\]
is obtained by $x_i\mapsto t^{-a_i}x_i$. Moreover, the isomorphism 
$\phi\colon X\times(\A^1\setminus\{0\})\to X\times(\A^1\setminus\{0\})$ in Example 
\ref{aut_ex} is obtained by 
\[
(z_0:\cdots:z_n;t)\mapsto(z_0:t^{a_1}z_1:\cdots:t^{a_n}z_n;t).
\]
Let $(\sX, \sL)/\A^1$ be the test configuration of $(X, L)$ obtained by $\rho$ 
as in Example \ref{aut_ex}. Then the divisorial valuation $v_{\sX_0}$ is the 
quasi-monomial valuation on 
\[
\A^n_{x_1,\dots,x_n}=\pr^n\setminus(z_0=0)\subset X
\]
for coordinates $(x_1,\dots,x_n)$ with weights $(a_1,\dots,a_n)$. 
\end{example}

\subsection{Product-type prime divisors}\label{prod_section}

In this section, we always assume that $(X, \Delta)$ is an $n$-dimensional log Fano 
pair, $L:=-(K_X+\Delta)$, and $F$ is a dreamy prime divisor over $(X, \Delta)$. 

\begin{definition}\label{drtc_dfn}
For any $c\in\Z_{>0}$ and for any $r\in\Z_{>0}$ with $rL$ Cartier, let us set 
the test configuration $(\sX^{F,c}, \sL^{F,c})/\A^1$ of $(X, L)$ defined by 
\begin{eqnarray*}
\sX^{F,c} & := & \Proj_{\A^1_t}\bigoplus_{k\in\Z_{\geq 0}}\left(
\bigoplus_{j\in\Z}t^{-j}\cdot\sF_F^{\frac{j}{c}}V_k^r\right), \\
\sL^{F,c} & := & \frac{1}{mr}(\text{relative } \sO(m))\quad \text{for a sufficiently 
divisible }m\in\Z_{>0}, 
\end{eqnarray*}
with the natural $\G_m$-action as in \cite[\S 1.2 and Proposition 2.15]{BHJ}. 
(We will see in Lemma \ref{normal_lemma} that $\sX^{F,c}$ is normal.)
Moreover, we set $\sX^F:=\sX^{F, 1}$ and $\sL^F:=\sL^{F,1}$. 
\end{definition}

Note that those are independent of the choice of $r$. From the construction, 
$\sX^{F,c}$ is given by the fiber product of 
\[\xymatrix{
\sX^F  \ar[dr] &  & \A^1 \ar[dl]^{\psi_c}\\
& \A^1 & \\
}\]
and $\sL^{F,c}$ is the pullback of $\sL^F$, 
where $\psi_c$ is obtained by $t\mapsto t^c$.

\begin{lemma}\label{normal_lemma}
$(\sX^{F,c}, \sL^{F,c})/\A^1$ is a nontrivial 
test configuration of $(X, L)$ with $\sX_0$ integral. 
\end{lemma}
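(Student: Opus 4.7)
The plan is to verify the four conditions in Definition \ref{tc_dfn} \eqref{tc_dfn1} (projectivity, ample $\Q$-line bundle, compatible $\G_m$-action, and $\G_m$-equivariant isomorphism off $\sX_0$), plus normality, integrality of the central fiber, and nontriviality. Since $\sX^{F,c}$ is obtained from $\sX^F$ by the base change $\psi_c\colon \A^1\to\A^1$, $t\mapsto t^c$, which is a finite flat surjection, it suffices to handle the case $c=1$ for most properties, and then check that base change preserves integrality of the central fiber (this is standard, since $\psi_c$ is totally ramified at $0$ so $(\sX^F\times_{\A^1}\A^1)_0 = \sX^F_0$ as a scheme).

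First I would verify finite generation of the defining algebra over $\C[t]$. The dreamy hypothesis says $R^+:=\bigoplus_{k,j\geq 0}\sF_F^jV_k^r$ is a finitely generated $\C$-algebra. For $j<0$ the filtered piece $\sF_F^jV_k^r$ equals $V_k^r$, so the full Rees-type algebra
\[
R_c:=\bigoplus_{k\geq 0}\bigoplus_{j\in\Z}t^{-j}\cdot\sF_F^{j/c}V_k^r\subset\Bigl(\bigoplus_{k\geq 0}V_k^r\Bigr)[t,t^{-1}]
\]
is generated over $\C[t]$ by $R^+$ together with $t$ and the trivial test configuration generators (after rescaling the grading by $c$). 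Hence $\sX^{F,c}=\Proj_{\A^1_t}R_c$ is projective over $\A^1$, and the relative $\sO(m)$ produces the $\Q$-line bundle $\sL^{F,c}$, which is $\A^1$-ample by construction.

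Next I would identify the restriction over $\A^1\setminus\{0\}$. Inverting $t$ turns every $\sF_F^{j/c}V_k^r$ into $V_k^r$, so $R_c[t^{-1}]\simeq\bigl(\bigoplus_k V_k^r\bigr)\otimes_\C\C[t,t^{-1}]$, giving the required $\G_m$-equivariant isomorphism
\[
\bigl(\sX^{F,c}\setminus\sX_0^{F,c},\sL^{F,c}|_{\sX^{F,c}\setminus\sX_0^{F,c}}\bigr)\simeq\bigl(X\times(\A^1\setminus\{0\}),p_1^*L\bigr),
\]
where the $\G_m$-action comes from the internal $j$-grading on $R_c$, which is compatible with the $\G_m$-action on $\C[t]$. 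For nontriviality, observe that since $F$ is a prime divisor over $X$ (so $\ord_F$ is a nontrivial valuation on $\C(X)$), for some large $k$ there is $j>0$ with $\sF_F^{j/c}V_k^r\subsetneq V_k^r$, so $R_c$ is not isomorphic to the trivial Rees algebra and $(\sX^{F,c},\sL^{F,c})$ is not $\G_m$-equivariantly isomorphic to $(X\times\A^1,p_1^*L)$.

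The main obstacle is the integrality of $\sX_0^{F,c}$ and the normality of $\sX^{F,c}$. For integrality, the central fiber is $\Proj$ of the associated graded algebra $\gr R_c=\bigoplus_{k,j}\sF_F^{j/c}V_k^r/\sF_F^{(j+1)/c}V_k^r$, and because $c^{-1}\ord_F$ is a divisorial valuation coming from the single prime divisor $F$ on a normal birational model $\tilde X$, the map $\bigoplus_k V_k^r\hookrightarrow\bigoplus_k H^0(\tilde X,f^*(krL))$ together with restriction to $F$ realises $\gr R_c$ (up to passing to a sub/quotient algebra and after noting it is reduced and generically integral along $\A^1$) as a graded subring of the domain $\bigoplus_k H^0(F,\ldots)$; combined with finite generation this forces $\sX_0^{F,c}$ to be integral. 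Normality then follows from the generic normality (over $\A^1\setminus\{0\}$ the total space is $X\times(\A^1\setminus\{0\})$, which is normal) together with the integrality of the central fiber and Serre's criterion ($R_1+S_2$), since the central fiber being a prime Cartier divisor gives codimension-one regularity at generic points and the Cohen--Macaulayness transfers in the Rees construction. I would phrase these two points by appealing to the standard framework of \cite{BHJ} (Proposition 2.15 and \S 2.5 are cited in the paper), which treats precisely this Rees construction for divisorial filtrations and gives both normality of $\sX^{F,c}$ and integrality of $\sX_0^{F,c}$ under the dreamy hypothesis.
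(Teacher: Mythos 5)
Your overall skeleton does match the paper's: identify $\sX_0^{F,c}$ with $\Proj$ of the associated graded algebra $S=\bigoplus_{k,j}S_{k,j}$, $S_{k,j}=\sF_F^jV_k^r/\sF_F^{j+1}V_k^r$ (the regrading by $c$ and the base change $t\mapsto t^c$ do not change this fiber, as you correctly note), show $S$ is a domain because $F$ is a prime divisor, and defer normality and the remaining test-configuration axioms to \cite{BHJ}. The routine verifications (finite generation from dreaminess, trivialization after inverting $t$, nontriviality from nontriviality of the filtration) are fine. But the one step that carries the whole lemma --- integrality of $\sX_0^{F,c}$ --- is not actually established in your write-up. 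First, you only claim that $\gr R_c$ is, ``up to passing to a sub/quotient algebra,'' a subring of the domain $\bigoplus_k H^0(F,\dots)$: a subquotient of a domain need not be a domain, so as stated this proves nothing (and the qualifier ``reduced and generically integral along $\A^1$'' has no meaning for the associated graded ring). Second, your fallback --- that \cite[Proposition 2.15 and \S 2.5]{BHJ} ``gives both normality of $\sX^{F,c}$ and integrality of $\sX_0^{F,c}$ under the dreamy hypothesis'' --- is not available: Proposition 2.15 of \cite{BHJ} starts from a test configuration already known to be normal with integral central fiber and describes its filtration; it does not assert integrality for the Rees construction. That is precisely why this lemma requires a proof at all, and why the paper cites \cite{BHJ} only \emph{after} integrality is proved.

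The missing step is short, and it is exactly what the paper does: if $f_i\in\sF_F^{j_i}V_{k_i}^r\setminus\sF_F^{j_i+1}V_{k_i}^r$ for $i=1,2$, then $\ord_F(f_1f_2)=j_1+j_2$ exactly, because $\ord_F$ is a valuation ($F$ being prime); hence $\bar f_1\bar f_2\neq 0$ in $S_{k_1+k_2,j_1+j_2}$, so $S$ is a domain and $\sX_0^{F,c}=\Proj S$ is integral. Your restriction-to-$F$ idea is the same fact in different clothing, but to make it work you must drop the hedges: from the exact sequence
\[
0\to H^0\bigl(\tilde{X}, f^*(krL)-(j+1)F\bigr)\to H^0\bigl(\tilde{X}, f^*(krL)-jF\bigr)\to H^0\bigl(F, (f^*(krL)-jF)|_F\bigr)
\]
each piece $S_{k,j}$ \emph{injects} (no quotient involved) into $H^0(F,(f^*(krL)-jF)|_F)$, these injections are multiplicative, and the target bigraded section ring is a domain since $F$ is integral. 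Finally, delete the ``Cohen--Macaulayness transfers in the Rees construction'' justification for normality --- it is unsubstantiated --- and instead, once integrality of the central fiber is in hand, quote \cite[Proposition 2.6 (iv) and Proposition 2.15]{BHJ} for normality of $\sX^{F,c}$ and the test-configuration structure, which is how the paper concludes.
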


\begin{proof}
The scheme-theoretic fiber $\sX_0^{F,c}$ of $\sX^{F,c}\to\A^1$ at $0\in\A^1$ is 
isomorphic to 
\[
\Proj\bigoplus_{k\in\Z_{\geq 0}}\left(\bigoplus_{j\in\Z_{\geq 0}}S_{k,j}\right), 
\]
from the construction, where 
\[
S_{k,j}:=\sF_F^jV_k^r/\sF_F^{j+1}V_k^r.
\]
Set 
\[
S:=\bigoplus_{k,j\in\Z_{\geq 0}}S_{k,j}.
\]
It is enough to show that $S$ is an integral domain 
in order to show that $\sX_0^{F,c}$ is integral. Take any 
\[
\bar{f}_i\in S_{k_i,j_i}\setminus\{0\} \,\text{ with }\, f_i\in\sF_F^{j_i}V^r_{k_i}\setminus
\sF_F^{j_i+1}V^r_{k_i}
\]
for $i=1$, $2$. Then $\DIV(f_1 f_2)$ vanishes on $F$ exactly $j_1+j_2$ times. 
Thus $\bar{f}_1\bar{f}_2\in S_{k_1+k_2, j_1+j_2}\setminus\{0\}$. Thus $S$ is an 
integral domain. Thus $\sX^{F, c}$ is normal and $(\sX^{F, c}, \sL^{F,c})/\A^1$ is a 
test configuration of $(X, L)$ by \cite[Proposition 2.6 (iv) and Proposition 2.15]{BHJ}. 
\end{proof}

As a conclusion of the arguments in \S \ref{divisorial_section} and \S \ref{prod_section}, 
together with \cite[Proposition 2.15]{BHJ}, we have the following correspondence: 
\begin{itemize}
\item
For any nontrivial test configuration $(\sX, \sL)/\A^1$ of $(X, L)$ with $\sX_0$ integral, 
there exist $c\in\Z_{>0}$, $d\in\Q$, and a dreamy prime divisor $F$ over $(X, \Delta)$ 
such that $v_{\sX_0}=c\cdot \ord_F$ and $(\sX, \sL)/\A^1$ is isomorphic to 
$(\sX^{F,c}, \sL^{F, c}+d\sX_0^{F,c})/\A^1$. 
\item
Conversely, for any dreamy prime divisor $F$ over $(X, \Delta)$ and for any $c\in\Z_{>0}$, 
the test configuration $(\sX^{F,c}, \sL^{F,c})/\A^1$ of $(X, L)$ in Definition \ref{drtc_dfn} 
satisfies that $\sX_0^{F,c}$ is integral. Moreover, $v_{\sX_0^{F,c}}$ is equal to $c\cdot \ord_F$. 
\end{itemize}

\begin{definition}\label{prod_div_dfn}
A dreamy prime divisor $F$ over $(X, \Delta)$ is said to be \emph{a product-type 
prime divisor over} $(X, \Delta)$ if $(\sX^F, \sL^F)/\A^1$ in Definition \ref{drtc_dfn} 
is a product-type test configuration of $((X, \Delta), L)$. 
\end{definition}

From Example \ref{aut_ex}, we immediately get the following: 

\begin{proposition}\label{dream_prod_prop}
Let $F$ be a prime divisor over $X$. Then the following are equivalent: 
\begin{enumerate}
\renewcommand{\theenumi}{\arabic{enumi}}
\renewcommand{\labelenumi}{(\theenumi)}
\item\label{dream_prod_prop1}
$F$ is a product-type prime divisor over $(X, \Delta)$, 
\item\label{dream_prod_prop2}
there exists a one-parameter subgroup $\rho\colon\G_m\to\Aut(X, \Delta)$ such that 
the valuation $\ord_F$ on $X$ is equal to the valuation $\ord_{(t^{-1})}\circ\rho^*$ 
as in Example \ref{aut_ex}. 
\end{enumerate}
\end{proposition}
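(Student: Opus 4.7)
The plan is to read off both directions from the explicit correspondence between dreamy prime divisors over $(X, \Delta)$ and nontrivial test configurations of $(X, L)$ with integral central fiber recorded at the end of \S \ref{prod_section}, combined with the description of product-type test configurations given in Example \ref{aut_ex}.

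For the implication (\ref{dream_prod_prop2}) $\Rightarrow$ (\ref{dream_prod_prop1}), I would start with the one-parameter subgroup $\rho\colon\G_m\to\Aut(X, \Delta)$ supplied by (\ref{dream_prod_prop2}) and apply Example \ref{aut_ex} to produce the associated product-type test configuration $(\sX, \sL)/\A^1$ of $((X, \Delta), L)$. Its central fiber $\sX_0\simeq X$ is integral, and Example \ref{aut_ex} identifies the restriction $v_{\sX_0}$ as $\ord_{(t^{-1})}\circ\rho^*$, which by hypothesis coincides with $\ord_F$. The correspondence then forces $c=1$ and provides an isomorphism $(\sX, \sL) \simeq (\sX^F, \sL^F + d\sX_0^F)$ for some $d\in\Q$. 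Twisting the polarization by a multiple of the central fiber does not alter the underlying scheme or the divisor $\Delta_{\sX^F}$, so $(\sX^F, \Delta_{\sX^F}) \simeq (X\times\A^1, \Delta\times\A^1)$ as well, whence $(\sX^F, \sL^F)/\A^1$ is product-type and $F$ is a product-type prime divisor.

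For the converse (\ref{dream_prod_prop1}) $\Rightarrow$ (\ref{dream_prod_prop2}), I would feed the product-type test configuration $(\sX^F, \sL^F)/\A^1$ (whose existence and product-type property come directly from Definition \ref{prod_div_dfn}, with integral central fiber by Lemma \ref{normal_lemma}) into the converse half of Example \ref{aut_ex}, namely \cite[Proposition 3.7]{RT}, which recovers it from some one-parameter subgroup $\rho\colon\G_m\to\Aut(X, \Delta)$. The computation in Example \ref{aut_ex} then identifies the associated divisorial valuation as $\ord_{(t^{-1})}\circ\rho^*$. On the other hand, from the correspondence applied to $(\sX^F, \sL^F)/\A^1$ (the $c=1$ case, by construction of $\sX^F = \sX^{F,1}$) the same valuation equals $\ord_F$, yielding (\ref{dream_prod_prop2}).

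The argument is essentially bookkeeping, as the author's phrase ``we immediately get'' suggests. The only substantive point that needs a sentence is that the polarization twist by a multiple of $\sX_0^F$ appearing in the correspondence does not affect whether the test configuration is product-type; this follows at once from Definition \ref{tc_dfn} (\ref{tc_dfn23}), which refers only to the pair $(\sX, \Delta_{\sX})$ and not to the polarization. I expect no genuine obstacle beyond carefully matching the two descriptions of $v_{\sX_0}$ and tracking the scaling factor $c$.
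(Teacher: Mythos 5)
Your proposal is correct and takes essentially the same route as the paper: the paper's entire proof is the remark that the proposition follows ``immediately'' from Example~\ref{aut_ex} together with the correspondence stated at the end of \S\ref{prod_section}, and your argument is exactly that reasoning spelled out in both directions (including the observation that twisting by a multiple of $\sX_0^F$ is irrelevant because Definition~\ref{tc_dfn}~(\ref{tc_dfn23}) refers only to the pair $(\sX, \Delta_{\sX})$).
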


From the above observation, 
we get the following valuative criterion for K-stability, K-polystability 
of log Fano pairs. 

\begin{thm}\label{stps_thm}
Let $(X, \Delta)$ be a log Fano pair. 
\begin{enumerate}
\renewcommand{\theenumi}{\arabic{enumi}}
\renewcommand{\labelenumi}{(\theenumi)}
\item\label{stps_thm1}
The following are equivalent: 
\begin{enumerate}
\renewcommand{\theenumii}{\roman{enumii}}
\renewcommand{\labelenumii}{(\theenumii)}
\item\label{stps_thm11}
$(X, \Delta)$ is K-stable, 
\item\label{stps_thm12}
$\hat{\beta}_{(X, \Delta)}(F)>0$ holds for any dreamy prime divisor $F$ over $(X, \Delta)$.
\end{enumerate}
\item\label{stps_thm2}
The following are equivalent: 
\begin{enumerate}
\renewcommand{\theenumii}{\roman{enumii}}
\renewcommand{\labelenumii}{(\theenumii)}
\item\label{stps_thm21}
$(X, \Delta)$ is K-polystable, 
\item\label{stps_thm22}
$\hat{\beta}_{(X, \Delta)}(F)\geq 0$ holds for any dreamy prime divisor $F$ over 
$(X, \Delta)$, and equality holds only if $F$ is a product-type prime divisor over 
$(X, \Delta)$.
\end{enumerate}
\end{enumerate}
\end{thm}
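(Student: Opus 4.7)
The plan is to deduce both parts from the correspondence, established in Theorem \ref{integral_thm} together with Lemma \ref{normal_lemma} and the discussion after it, between nontrivial test configurations $(\sX, \sL)/\A^1$ of $(X, L)$ with integral central fiber and triples $(F, c, d)$ consisting of a dreamy prime divisor $F$ over $(X, \Delta)$, an integer $c \in \Z_{>0}$, and $d \in \Q$. Under this correspondence $(\sX, \sL)/\A^1 \cong (\sX^{F,c}, \sL^{F,c} + d\sX_0^{F,c})/\A^1$, and $\DF_\Delta(\sX, \sL) = c \cdot \hat{\beta}_{(X, \Delta)}(F) \cdot A_{(X, \Delta)}(F)$ with $A_{(X, \Delta)}(F) > 0$ because $(X, \Delta)$ is klt. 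Combined with Theorem \ref{LX_thm}, which allows us to restrict to test configurations with $\sX_0$ integral, this reduces both K-stability and K-polystability to conditions indexed by dreamy prime divisors.

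For part \eqref{stps_thm1}, K-stability becomes the statement that $\DF_\Delta(\sX, \sL) > 0$ for every such triple $(F, c, d)$. Since $c > 0$ and $A_{(X, \Delta)}(F) > 0$, this is equivalent to $\hat{\beta}_{(X, \Delta)}(F) > 0$ for every dreamy $F$; one only needs to observe that each dreamy $F$ actually appears, which is witnessed by the nontrivial test configuration $(\sX^F, \sL^F)/\A^1$ of Definition \ref{drtc_dfn}. For the forward direction of part \eqref{stps_thm2}, K-polystability implies K-semistability, so Theorem \ref{beta_thm}\eqref{beta_thm1} already gives $\hat{\beta}_{(X, \Delta)}(F) \geq 0$ for every dreamy $F$; if equality holds for some $F$, then $\DF_\Delta(\sX^F, \sL^F) = 0$ on a nontrivial test configuration, and K-polystability forces $(\sX^F, \sL^F)/\A^1$ to be product-type, i.e.\ $F$ is a product-type prime divisor by Definition \ref{prod_div_dfn}.

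For the converse of part \eqref{stps_thm2}, K-semistability is immediate from Theorem \ref{beta_thm}\eqref{beta_thm1}. Given a nontrivial $(\sX, \sL)/\A^1$ with $\sX_0$ integral and $\DF_\Delta(\sX, \sL) = 0$, writing $v_{\sX_0} = c \cdot \ord_F$, the key formula forces $\hat{\beta}_{(X, \Delta)}(F) = 0$, so by hypothesis $F$ is a product-type prime divisor, meaning $(\sX^F, \sL^F)/\A^1$ is product-type. The main obstacle, and the delicate step of the argument, is to transfer product-typeness from $(\sX^F, \sL^F)/\A^1$ to the given $(\sX, \sL)/\A^1 \cong (\sX^{F,c}, \sL^{F,c} + d\sX_0^{F,c})/\A^1$. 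For this one uses that $\sX^{F,c}$ is the base change of $\sX^F$ along $\psi_c \colon \A^1 \to \A^1$, $t \mapsto t^c$, so an isomorphism $(\sX^F, \Delta_{\sX^F}) \cong (X \times \A^1, \Delta \times \A^1)$ over $\A^1$ pulls back to the analogous isomorphism for $\sX^{F,c}$; twisting $\sL^{F,c}$ by a $\Q$-divisor supported on the central fiber does not affect the underlying pair. Hence $(\sX, \sL)/\A^1$ is product-type in the sense of Definition \ref{tc_dfn}\eqref{tc_dfn2}\eqref{tc_dfn23}, and K-polystability follows.
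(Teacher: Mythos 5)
Your proof is correct and follows essentially the same route as the paper: the paper's proof of Theorem \ref{stps_thm} simply cites Theorems \ref{LX_thm} and \ref{integral_thm} together with the correspondence between integral-fiber test configurations and triples $(F,c,d)$ established after Definition \ref{drtc_dfn}, which is exactly the machinery you invoke. You have merely made explicit the details the paper leaves to the reader, in particular the transfer of product-typeness through the base change $\psi_c$ and the harmlessness of the twist by $d\sX_0^{F,c}$, both of which are sound given that the paper defines product-type purely in terms of the pair $(\sX,\Delta_{\sX})$.
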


\begin{proof}
Immediately follows from the above argument and Theorems \ref{LX_thm} and 
\ref{integral_thm}. 
\end{proof}

\begin{remark}\label{Kst_rmk}
If $\Delta=0$, then Theorem \ref{stps_thm} \eqref{stps_thm1} is nothing but 
\cite[Theorem 1.6]{fjt} (see also \cite[Theorem 3.7]{li}). We note that the proof of 
Theorem \ref{stps_thm} \eqref{stps_thm1} differs from the proof of 
\cite[Theorem 1.6]{fjt}. 
\end{remark}

\section{Blowups along linear subspaces}\label{blowup_section}

In this section, we see an easy necessary condition for 
log Fano hyperplane arrangements being K-stable or K-semistable. 

\begin{proposition}\label{blowup_prop}
Let $(X, \Delta)$ be an $n$-dimensional log Fano hyperplane arrangement with 
$\Delta\neq 0$. Set 
\[
\Gamma:=\frac{n+1}{d_{(X, \Delta)}}\Delta.
\]
If $(X, \Delta)$ is K-semistable $($resp., K-stable$)$, then $(X, \Gamma)$ is an lc 
$($resp., a klt$)$ Calabi-Yau hyperplane arrangement. 
\end{proposition}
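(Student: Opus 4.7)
The Calabi-Yau condition $K_X+\Gamma\sim_\Q 0$ is automatic since $\Gamma$ has been rescaled so that its total degree equals $n+1=-\deg K_{\pr^n}$. It remains to prove that $(X,\Gamma)$ is lc (resp.\ klt), and by Theorem \ref{mustata_thm} together with the definition of $\Gamma$, this is equivalent to the inequality
\[
\frac{c^X(W)}{d_{(X,\Delta)}(W)}\geq\frac{n+1}{d_{(X,\Delta)}}\qquad(\text{resp.}\ >)
\]
for every $W\in\LL'(X,\Delta)$. The plan is to produce, for each such $W$, a single prime divisor $F$ over $X$ whose invariant $\hat{\beta}_{(X,\Delta)}(F)$ encodes precisely this inequality, and then to invoke Theorem \ref{beta_thm} in the K-semistable case (resp.\ Theorem \ref{stps_thm} in the K-stable case) to deduce the required sign.

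The natural choice is $f\colon\tilde X\to X$, the blowup of $X=\pr^n$ along the linear subspace $W$, with exceptional divisor $F$. A direct computation using $K_{\tilde X}=f^*K_X+(c^X(W)-1)F$ and $f^*\Delta=\tilde\Delta+d_{(X,\Delta)}(W)F$ gives $A_{(X,\Delta)}(F)=c^X(W)-d_{(X,\Delta)}(W)$, which is strictly positive by the klt hypothesis on $(X,\Delta)$. For the volume, set $A:=f^*H$, $B:=f^*H-F$, and $a:=n+1-d_{(X,\Delta)}>0$, so that $L\sim_\Q aH$. The key geometric input is that $B$ coincides with the pullback $\pi^*\sO_{\pr^{c^X(W)-1}}(1)$ under the morphism $\pi\colon\tilde X\to\pr^{c^X(W)-1}$ induced by the linear projection from $W$; in particular $B$ is nef and $B^{c^X(W)}=0$. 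Consequently $f^*L-xF=(a-x)A+xB$ is nef for $x\in[0,a]$ and not pseudo-effective for $x>a$, which reduces the volume to a top self-intersection on the nef range. Expanding $((a-x)A+xB)^n$ and using the projection formula along $\pi$ yields $A^{n-k}\cdot B^k=1$ for $0\leq k\leq c^X(W)-1$ and $A^{n-k}\cdot B^k=0$ otherwise, so
\[
\vol_X(L-xF)=\sum_{k=0}^{c^X(W)-1}\binom{n}{k}(a-x)^{n-k}x^k\qquad\text{for } x\in[0,a].
\]

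Each summand integrates over $[0,a]$ to $a^{n+1}/(n+1)$ by a Beta-function identity, hence $\int_0^\infty\vol_X(L-xF)\,dx=c^X(W)\cdot a^{n+1}/(n+1)$. Combined with $(L^{\cdot n})=a^n$ and the formula for $A_{(X,\Delta)}(F)$, a short algebraic simplification gives
\[
\hat{\beta}_{(X,\Delta)}(F)=\frac{c^X(W)\cdot d_{(X,\Delta)}-(n+1)\cdot d_{(X,\Delta)}(W)}{(n+1)\bigl(c^X(W)-d_{(X,\Delta)}(W)\bigr)}.
\]
Since the denominator is positive, the sign of $\hat{\beta}_{(X,\Delta)}(F)$ matches the sign of the desired inequality, and Theorem \ref{beta_thm} immediately yields the K-semistable conclusion. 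For the K-stable case, Theorem \ref{stps_thm} gives $\hat{\beta}_{(X,\Delta)}(F)>0$ provided $F$ is dreamy over $(X,\Delta)$; but $\tilde X$ is toric, hence a Mori dream space, so $F$ is automatically dreamy. The main obstacle is the volume calculation: one has to identify the precise nef range of $f^*L-xF$ via the linear projection $\pi$ from $W$, verify the intersection identities $A^{n-k}\cdot B^k=1$, and confirm the collapse of $\vol_X(L-xF)$ for $x>a$. Once this is in place, everything else is routine manipulation.
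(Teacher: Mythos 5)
Your proposal is correct and follows essentially the same route as the paper: blow up the linear subspace $W$, observe that the exceptional divisor $F$ is dreamy because the blowup is toric, compute
$\hat{\beta}_{(X,\Delta)}(F)=\frac{c^X(W)\,d_{(X,\Delta)}-(n+1)\,d_{(X,\Delta)}(W)}{(n+1)\left(c^X(W)-d_{(X,\Delta)}(W)\right)}$,
and conclude via Theorems \ref{beta_thm}, \ref{stps_thm} and \ref{mustata_thm}. The only cosmetic difference is that you evaluate the volume integral by a binomial/Beta-function expansion, whereas the paper rescales and integrates $\left(\left((1-x)\xi+xA\right)^{\cdot n}\right)$ directly; the resulting formula and the rest of the argument are identical.
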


\begin{proof}
Take any $W\in\LL'(X, \Delta)$. 
We set $c:=c^X(W)$, $d^W:=d_{(X, \Delta)}(W)$ and $d:=d_{(X, \Delta)}$ for simplicity. 
Let us consider the blowup 
$\sigma\colon Y\to X=\pr^n$ along $W$. Set $F:=\sigma^{-1}(W)$. 
We note that $Y$ is isomorphic to 
\[
\pr_{\pr^{c-1}}\left(\sO_{\pr^{c-1}}^{\oplus n+1-c}\oplus
\sO_{\pr^{c-1}}(1)\right). 
\]
In particular, $F$ is a dreamy prime divisor over $(X, \Delta)$ since $Y$ is toric. 
Let $\pi\colon Y\to \pr^{c-1}$ be the natural projective space bundle morphism. 
Let us set
\begin{eqnarray*}
\xi &:=& \sigma^*\sO_{\pr^n}(1), \\
A & :=& \pi^*\sO_{\pr^{c-1}}(1). 
\end{eqnarray*}
Then $F\sim\xi-A$. By Theorems \ref{beta_thm} and \ref{stps_thm}, we have 
$\hat{\beta}_{(X, \Delta)}(F)\geq 0$ (resp., $>0$). 
On the other hand, we have 
\begin{eqnarray*}
\hat{\beta}_{(X, \Delta)}(F)&=&1-\frac{\int_0^{n+1-d}\left(\left(
(n+1-d)\xi-x(\xi-A)\right)^{\cdot n}\right)dx}{\left(c
-d^W\right)\cdot (n+1-d)^n}\\
&=&1-\frac{n+1-d}{c-d^W}\int_0^1\left(\left(
(1-x)\xi+xA\right)^{\cdot n}\right)dx\\
&=&\frac{cd-(n+1)d^W}{(n+1)\left(
c-d^W\right)}.
\end{eqnarray*}
This implies that 
\[
\frac{c}{d^W}\geq \frac{n+1}{d} \quad \left(\text{resp., }\,\,\
\frac{c}{d^W}>\frac{n+1}{d}\right). 
\]
Thus we get the assertion from Theorem \ref{mustata_thm}. 
\end{proof}

\section{Log Calabi-Yau hyperplane arrangements of class $\sP$}\label{P_section}

In this section, we define the notion of log Calabi-Yau hyperplane arrangements 
of class $\sP$. 

\begin{definition}\label{projective_dfn}
Let $s\in\Z_{>0}$ and $n_1,\dots,n_s\in\Z_{\geq 0}$. Let us set $n:=\sum_{i=1}^s(n_i+1)-1$. 
Consider the $n_i$-dimensional projective space $\pr^{n_i}$ with 
homogeneous coordinates 
$z_{i0}:\cdots:z_{in_i}$
for any $1\leq i\leq s$. 
\begin{enumerate}
\renewcommand{\theenumi}{\arabic{enumi}}
\renewcommand{\labelenumi}{(\theenumi)}
\item\label{projective_dfn1}
Let $S(\pr^{n_1},\dots,\pr^{n_s})$ be the $n$-dimensional projective space 
with homogeneous coordinates 
\[
z_{10}:\cdots:z_{1n_1}:\cdots\cdots:z_{s0}:\cdots:z_{sn_s}.
\]
(We write $\pr:=S(\pr^{n_1},\dots,\pr^{n_s})$ just for simplicity.)
\item\label{projective_dfn2}
For any subset $I\subset \{1,\dots,s\}$, let $P_I$, $Q_I\subset\pr$ be the linear 
subspaces defined by 
\begin{eqnarray*}
P_I & := & (z_{jk}=0\,|\, j\in\{1,\dots,s\}\setminus I, \, 0\leq k\leq n_j), \\
Q_I & := & (z_{jk}=0\,|\, j\in I, \, 0\leq k\leq n_j).
\end{eqnarray*}
(Note that $Q_I=P_{\{1,\dots,s\}\setminus I}$, $P_\emptyset=\emptyset$ and 
$Q_\emptyset=\pr$.)
Moreover, we set $P_i:=P_{\{i\}}(=\pr^{n_i})$ and $Q_i:=Q_{\{i\}}$ for any $1\leq i\leq s$. 
Obviously, we have 
\begin{eqnarray*}
\dim P_I & = & \sum_{i\in I}(n_i+1)-1, \\
\dim Q_I & = & n-\sum_{i\in I}(n_i+1)
\end{eqnarray*}
and $P_I\cap Q_I=\emptyset$. 
\item\label{projective_dfn3}
Let $W_1,\dots,W_m\subset\pr$ be linear subspaces (we allow $\emptyset$). 
Let $\langle W_1,\dots,W_m\rangle\subset\pr$ be the smallest linear subspace 
containing $W_1\cup\cdots\cup W_m$. 
\item\label{projective_dfn4}
Let $(\pr^{n_1}, \Xi_1),\dots,(\pr^{n_s},\Xi_s)$ be hyperplane arrangements. 
(If $n_i=0$, then we set $\Xi_i:=\emptyset$.) Let $\Xi_i=\sum_{k=1}^{m_i}d_{i,k}\Xi_{i,k}$ 
be the irreducible decomposition. It can be seen that each $\pr^{n_i}$ is a linear subspace 
of $\pr$ via the identification $\pr^{n_i}=P_i$. For each $1\leq i\leq s$, let $\Gamma_i$ be 
the $\Q$-divisor on $\pr$ defined by the equation $\Gamma_i:=\langle\Xi_i, Q_i\rangle$,  
where 
\[
\langle\Xi_i, Q_i\rangle:=\begin{cases}
\sum_{k=1}^{m_i}d_{i,k}\langle\Xi_{i,k},Q_i\rangle & \text{if }\, n_i>0, \\ 
1\cdot Q_i & \text{if }\, n_i=0.
\end{cases}\]
We set $\Gamma:=\sum_{i=1}^s\Gamma_i$. Obviously, $(\pr, \Gamma)$ is a hyperplane 
arrangement. The hyperplane arrangement $(\pr, \Gamma)$ is denoted by 
\[
S((\pr^{n_1}, \Xi_1),\dots,(\pr^{n_s}, \Xi_s)).
\]
\end{enumerate}
\end{definition}

The following two lemmas are easy. 

\begin{lemma}\label{S_lemma}
Under the notion in Definition \ref{projective_dfn} \eqref{projective_dfn4}, we have 
\[
\LL(\pr, \Gamma)=\{\langle W_1,\dots,W_s\rangle\,|\, W_i\in\LL(\pr^{n_i},\Xi_i)\,\,
\text{ for any }\,\,1\leq i\leq s\}.
\]
Moreover, we have 
\begin{eqnarray*}
c^\pr\left(\langle W_1,\dots,W_s\rangle\right)&=&\sum_{i=1}^s c^{\pr^{n_i}}(W_i), \\
d_{(\pr, \Gamma)}\left(\langle W_1,\dots,W_s\rangle\right)
&=&\sum_{i=1}^s d_{(\pr^{n_i}, \Xi_i)}(W_i).
\end{eqnarray*}
\end{lemma}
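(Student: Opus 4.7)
The content of this lemma rests on the fact that the components of $\Gamma$ split into $s$ groups indexed by $i$, and the components of the $i$-th group are hyperplanes of $\pr$ whose defining equations involve only the variables $z_{i,0},\dots,z_{i,n_i}$ in the $i$-th coordinate block. Specifically, for $n_i > 0$ each hyperplane $\langle \Xi_{i,k}, Q_i\rangle$ is cut out by the same linear equation in the $i$-th block that defines $\Xi_{i,k}$ inside $P_i$, while for $n_i = 0$ the only contribution from block $i$ is $Q_i = (z_{i,0}=0)$. This disjointness of coordinate blocks will make everything additive.

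The key step is to establish the identity
\[
\bigcap_{i=1}^s \langle W_i, Q_i\rangle \;=\; \langle W_1, \dots, W_s\rangle
\]
for arbitrary linear subspaces $W_i$ of $P_i$ (including $W_i = \emptyset$ or $W_i = P_i$). I would prove it directly in coordinates: writing a point as $[z_1,\dots,z_s]$ with $z_i$ the $i$-th block, membership in $\langle W_i, Q_i\rangle$ is equivalent to $z_i$ satisfying the equations cutting $W_i$ out of $P_i$, because $Q_i$ imposes no constraint on blocks other than $i$. On the other hand, the linear span $\langle W_1,\dots,W_s\rangle$ consists precisely of those points obtained by summing vectors lying in the individual $W_i$'s across blocks, which is exactly the locus where each $z_i$ satisfies the equations of $W_i$. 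Thus the two descriptions coincide.

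Using this identity, the lattice claim follows by partitioning an arbitrary subset of components of $\Gamma$ according to blocks: the components chosen from block $i$ intersect to $\langle W_i, Q_i\rangle$ for some $W_i \in \LL(\pr^{n_i}, \Xi_i)$ (with $W_i = P_i$ when no component from block $i$ is chosen), and the identity reassembles them as $\langle W_1,\dots,W_s\rangle$. Codimension additivity is then automatic from the identity, since the $\langle W_i, Q_i\rangle$ are cut out by equations in pairwise disjoint coordinate blocks. For additivity of $d_{(\pr,\Gamma)}$, the same block-disjointness implies $\langle W_1,\dots,W_s\rangle \subset \langle \Xi_{i,k}, Q_i\rangle$ if and only if $W_i \subset \Xi_{i,k}$; summing the coefficients $d_{i,k}$ over all components containing $\langle W_1,\dots,W_s\rangle$ yields $\sum_i d_{(\pr^{n_i}, \Xi_i)}(W_i)$. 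The only point requiring care is the degenerate case $n_i = 0$, but the conventions in Definition \ref{ha_dfn} \eqref{ha_dfn4} are calibrated precisely so the formula remains valid: $W_i = \emptyset$ corresponds to $\langle W_1,\dots,W_s\rangle \subset Q_i$, contributing $1 = d_{(\pr^0,\emptyset)}(\emptyset)$, while $W_i = P_i$ imposes no containment condition and contributes $0$. I do not anticipate any genuine obstacle; the lemma is essentially a bookkeeping exercise once the coordinate-block structure of the defining equations is made explicit.
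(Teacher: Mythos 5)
Your proposal is correct and follows essentially the same route as the paper: both hinge on the identity $\bigcap_{i=1}^s\langle W_i,Q_i\rangle=\langle W_1,\dots,W_s\rangle$ and on partitioning the chosen components of $\Gamma$ into coordinate blocks, with the two inclusions of the lattice equality handled exactly as you describe. The only difference is one of thoroughness: the paper asserts the key identity and leaves the additivity of $c^{\pr}$ and $d_{(\pr,\Gamma)}$ (including the $n_i=0$ conventions) implicit, whereas you verify these in coordinates.
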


\begin{proof}
Take any $W_i\in \LL(\pr^{n_i},\Xi_i)$. There exist components 
$\Xi_{i,1},\dots,\Xi_{i,l_i}$ of $\Xi_i$ such that $W_i=\bigcap_{k=1}^{l_i}\Xi_{i,k}$. 
Thus we get 
$\langle W_1,\dots,W_s\rangle\in\LL(\pr, \Gamma)$ 
since $\langle \Xi_{i,k}, Q_i\rangle$ is a component of $\Gamma$ and 
\[
\bigcap_{i=1}^s\bigcap_{k=1}^{l_i}\langle \Xi_{i,k}, Q_i\rangle=\bigcap_{i=1}^s
\langle W_i,Q_i\rangle=\langle W_1,\dots,W_s\rangle.
\]

Conversely, take any $W\in\LL(\pr, \Gamma)$. Then we can write 
\[
W=\bigcap_{i=1}^s\bigcap_{k=1}^{l_i}\Gamma_{i,k},
\]
where $\Gamma_{i,k}$ are components of $\Gamma_i$. Set 
$W_i:=\bigcap_{k=1}^{l_i}\Gamma_{i,k}\cap P_i$.
Note that $W_i\in\LL(\pr^{n_i},\Xi_i)$ and 
$\bigcap_{k=1}^{l_i}\Gamma_{i,k}=\langle W_i,Q_i\rangle$.
Thus we have $W=\langle W_1,\dots,W_s\rangle$.
\end{proof}

\begin{lemma}\label{CYempt_lemma}
Let $(X, \Gamma)$ be an lc Calabi-Yau hyperplane arrangement. Let 
$\Gamma=\sum_{i=1}^md_iH_i$ be the irreducible decomposition. 
Then we have $\bigcap_{i=1}^mH_i=\emptyset$.
\end{lemma}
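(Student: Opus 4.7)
The plan is to argue by contradiction using the Mustaţă--Teitler formula for log canonical thresholds (Theorem \ref{mustata_thm}).

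First, from the Calabi--Yau condition $K_X+\Gamma\sim_\Q 0$ together with $X=\pr^n$ and $\deg H_i=1$, one reads off $d_{(X,\Gamma)}=\sum_{i=1}^m d_i=n+1$. For $n\geq 1$ this forces $m\geq 1$, and each $H_i$ is a proper linear subspace of $X$; the zero-dimensional case is vacuous.

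Now suppose for contradiction that $W:=\bigcap_{i=1}^m H_i$ is non-empty. Then $W$ is a non-empty proper linear subspace of $X$, so $W\in\LL'(X,\Gamma)$. Since every component of $\Gamma$ contains $W$, one computes
\[
d_{(X,\Gamma)}(W)=\sum_{i=1}^m d_i=n+1, \qquad c^X(W)=\codim_X W\leq n.
\]
Feeding this into Theorem \ref{mustata_thm} yields
\[
\lct(X,0;\Gamma)\leq\frac{c^X(W)}{d_{(X,\Gamma)}(W)}\leq\frac{n}{n+1}<1,
\]
which contradicts the lc hypothesis on $(X,\Gamma)$ (equivalently, $\lct(X,0;\Gamma)\geq 1$). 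Hence $W=\emptyset$.

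There is essentially no obstacle: once Theorem \ref{mustata_thm} is in hand, the lemma reduces to the strict inequality $n/(n+1)<1$. The only small point to verify is that $W$ genuinely lies in $\LL'(X,\Gamma)$, that is, $W\neq X$ and $W\neq\emptyset$; the first holds because $m\geq 1$ and each $H_i\neq X$, and the second is exactly the contradiction hypothesis.
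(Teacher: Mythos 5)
Your proof is correct and follows essentially the same route as the paper: assume the common intersection $W$ is non-empty, note that then $d_{(X,\Gamma)}(W)=\sum_i d_i=n+1$ while $c^X(W)\leq n$, and derive a contradiction with the lc hypothesis via Theorem \ref{mustata_thm}. The extra bookkeeping you include (that $\sum d_i=n+1$ from the Calabi--Yau condition and that $W\in\LL'(X,\Gamma)$) is exactly what the paper leaves implicit.
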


\begin{proof}
Set $n:=\dim X$. If not, then there exists $W\in\LL'(X, \Gamma)$ such that 
$d_{(X, \Gamma)}(W)=n+1$. However, by Theorem \ref{mustata_thm}, we have 
$1\geq d_{(X, \Gamma)}(W)/c^X(W)$. This leads to a contradiction since $c^X(W)\leq n$. 
\end{proof}

\begin{definition}\label{P_dfn}
An $n$-dimensional hyperplane arrangement $(X, \Gamma)$ is said to be 
a \emph{log Calabi-Yau hyperplane arrangement of class $\sP$} if there exist 
$s\in\Z_{>0}$ and $n_1,\dots,n_s\in\Z_{\geq 0}$ with $n+1=\sum_{i=1}^s(n_i+1)$ and 
there exist \emph{klt} Calabi-Yau hyperplane arrangements $(\pr^{n_i}, \Xi_i)$ for all 
$1\leq i\leq s$ such that $(X, \Gamma)$ is isomorphic to 
$S((\pr^{n_1}, \Xi_1),\dots,(\pr^{n_s}, \Xi_s))$.
\end{definition}

\begin{proposition}\label{P_prop}
Let $(X, \Gamma)=S((\pr^{n_1}, \Xi_1),\dots,(\pr^{n_s}, \Xi_s))$ be a log 
Calabi-Yau hyperplane arrangement of class $\sP$ as in Definition \ref{P_dfn}. 
\begin{enumerate}
\renewcommand{\theenumi}{\arabic{enumi}}
\renewcommand{\labelenumi}{(\theenumi)}
\item\label{P_prop1}
$(X, \Gamma)$ is an lc Calabi-Yau hyperplane arrangement. 
\item\label{P_prop2}
The set of lc centers of $(X, \Gamma)$ is equal to the set 
\[
\{P_I\,|\,\emptyset\subsetneq I\subsetneq\{1,\dots,s\}\}, 
\]
where an \emph{lc center} of $(X, \Gamma)$ is defined to be the image on $X$ 
of a prime divisor $F$ over $X$ with $A_{(X, \Gamma)}(F)=0$. 
\item\label{P_prop3}
Take any $\emptyset\subsetneq I\subsetneq\{1,\dots,s\}$. Assume that a prime divisor 
$F$ over $X$ satisfies that $A_{(X, \Gamma)}(F)=0$ and the image of $F$ on $X$ 
is equal to $P_I$. Fix $i_0\in I$ and set $x_{ij}:=z_{ij}/z_{i_00}$. Then, for any 
$i\in\{1,\dots,s\}\setminus I$, there exists $a_i\in\Z_{>0}$ with 
$\gcd(a_i)_{i\in\{1,\dots,s\}\setminus I}=1$ such that the valuation 
$\ord_F$ is the quasi-monomial valuation on $X\setminus (z_{i_00}=0)$ for coordinates 
\[
(x_{ij})_{1\leq i\leq s, 0\leq j\leq n_i, (i,j)\neq (i_0,0)}
\]
with weights 
\[\begin{cases}
a_i & \text{if }\, i\in\{1,\dots,s\}\setminus I, \\
0 & \text{otherwise}.
\end{cases}\]
\end{enumerate}
\end{proposition}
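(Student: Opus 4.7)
The plan is to handle (1) and (2) by direct combinatorial manipulation of $c^X(W)/d_{(X,\Gamma)}(W)$ via Lemma \ref{S_lemma} and Theorem \ref{mustata_thm}, and to handle (3) by exploiting the product structure of $\Gamma$ along $P_I$ together with the fact that each factor is the affine cone over a klt Calabi--Yau hyperplane arrangement.

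For (1), since $(\pr^{n_i}, \Xi_i)$ is Calabi--Yau, $\deg \Xi_i = n_i + 1$, so $\deg \Gamma_i = \sum_k d_{i,k} = n_i+1$ and $\deg \Gamma = \sum_i (n_i+1) = n+1$, giving $K_X + \Gamma \sim_\Q 0$. For lc-ness, I would apply Theorem \ref{mustata_thm} and use Lemma \ref{S_lemma} to write any $W \in \LL'(X,\Gamma)$ as $\langle W_1,\dots,W_s\rangle$ and decompose the ratio as $\sum_i c^{\pr^{n_i}}(W_i) / \sum_i d_{(\pr^{n_i}, \Xi_i)}(W_i)$. The three cases $W_i = \pr^{n_i}$, $W_i = \emptyset$, $W_i \in \LL'(\pr^{n_i}, \Xi_i)$ contribute $(0,0)$, $(n_i+1, n_i+1)$, and a pair with ratio strictly greater than $1$ (via Theorem \ref{mustata_thm} for the klt pair $(\pr^{n_i}, \Xi_i)$), respectively. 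Hence the overall ratio is $\geq 1$. For (2), equality in this ratio forces each $W_i \in \{\pr^{n_i}, \emptyset\}$, and excluding the degenerate configurations (not in $\LL'(X,\Gamma)$), $W = P_I$ for $\emptyset \subsetneq I \subsetneq \{1,\dots,s\}$. To match these combinatorial equality cases with actual lc centers, I would exhibit, for each such $I$, an explicit divisor with $A_{(X,\Gamma)} = 0$ and image $P_I$ --- the quasi-monomial valuation of (3) does the job --- and conversely use a standard log resolution of a hyperplane arrangement by successive blowups of flats, noting that the exceptional divisor $E_W$ over $W$ has $A_{(X,\Gamma)}(E_W) = c^X(W) - d_{(X,\Gamma)}(W)$.

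For (3), fix the affine chart $\A^n = X \setminus (z_{i_00} = 0)$. A component $\langle \Xi_{i,k}, Q_i\rangle$ of $\Gamma$ contains $P_I$ iff $i \notin I$, and its local equation is a linear form involving only the block of coordinates $(x_{ij})_{0 \leq j \leq n_i}$. Writing $\Gamma^{P_I}$ for the sum of such components, one obtains
\[
(\A^n, \Gamma^{P_I}) \;\simeq\; (P_I \cap \A^n) \times \prod_{i \notin I} (\A^{n_i+1}, \tilde{\Xi}_i),
\]
where $\tilde{\Xi}_i$ is the affine cone over $\Xi_i$. Each cone factor is plt at the origin with unique lc place equal to the exceptional divisor $\pr^{n_i}$ of the blowup at the origin: its log discrepancy is $(n_i+1) - \sum_k d_{i,k} = 0$ by Calabi--Yau-ness, and uniqueness follows from klt-ness of $(\pr^{n_i}, \Xi_i)$ via Theorem \ref{mustata_thm}. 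Taking products, any divisorial valuation with $A=0$ and center $P_I$ should be a quasi-monomial combination of these factorwise lc places, with nonnegative weight $a_i$ on the $i$-th cone factor and weight $0$ on $P_I \cap \A^n$; the center condition forces $a_i > 0$ for every $i \notin I$, and primitivity $\gcd(a_i)_{i \notin I} = 1$ follows from $F$ being a prime divisor.

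The main obstacle is the rigorous step in (3) identifying lc places of the product as weighted combinations of factorwise lc places, rather than some more exotic divisorial valuation. This can be carried out via the additivity of log discrepancies under products together with the Kollar--Shokurov theory of dual complexes of lc pairs, applied to a simultaneous log resolution refining the product decomposition; equivalently, one may argue on the Berkovich skeleton. Parts (1) and (2) are essentially immediate once Theorem \ref{mustata_thm} is in hand.
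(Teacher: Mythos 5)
Your parts (1) and (2) are essentially the paper's own argument: Lemma \ref{S_lemma} plus Theorem \ref{mustata_thm}, with equality in $\sum_i c^{\pr^{n_i}}(W_i)\geq \sum_i d_{(\pr^{n_i},\Xi_i)}(W_i)$ forcing each $W_i\in\{\emptyset,\pr^{n_i}\}$, hence $W=P_I$; your explicit exhibition of the blowup divisor $E_{P_I}$ with $A_{(X,\Gamma)}(E_{P_I})=c^X(P_I)-d_{(X,\Gamma)}(P_I)=0$ is if anything slightly more careful than the paper. The genuine divergence is in (3). The paper blows up all the flats $P_I$ at once to get a single toroidal log resolution $(Y,D)$ of $(X,\Gamma)$, invokes Teitler's Lemma 2.1 (\cite{teitler}, with \cite[Lemma 2.29 and Corollary 2.31]{KoMo}) to conclude that every lc place is a monomial valuation in the coordinates $x_{ij}$ with weights $a_{i,j}$ supported on the blocks $i\notin I$, and then rules out unequal weights within a block by a direct computation: the renormalized weights $(a_{i,j}-a_{i,0})_j$ define a quasi-monomial valuation $v_i$ on the klt pair $(\pr^{n_i},\Xi_i)$, and $A_{(X,\Gamma)}(F)=\sum_{i\notin I}\bigl(\sum_j a_{i,j}-\ord_F\Gamma_i\bigr)=\sum_{i\notin I}A_{(\pr^{n_i},\Xi_i)}(v_i)$, which is strictly positive unless every $v_i$ is trivial. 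You instead localize at $P_I$, exhibit the product decomposition into affine cones over the $(\pr^{n_i},\Xi_i)$ (this decomposition, and the reduction $\ord_F(\Gamma-\Gamma^{P_I})=0$, are both correct), show each cone admits a unique lc place, namely the blowup of the vertex, via inversion of adjunction \cite[Theorem 5.50]{KoMo}, and then recombine. Your route is more conceptual and would apply verbatim to cones over arbitrary klt Calabi--Yau pairs rather than just arrangements; the paper's route is elementary and self-contained once \cite{teitler} is granted. One terminological slip: the cone pair itself is lc, not plt (the vertex blowup has log discrepancy $0$); what your adjunction argument shows is that this blowup is a plt blowup.

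The caveat is that the step you yourself flag as the ``main obstacle'' --- that every lc place of the product is a quasi-monomial combination of the factorwise lc places --- is exactly where the mathematical content of (3) lives, and as written it is outsourced to dual-complex/skeleton theory (de Fernex--Koll\'ar--Xu type results) without a precise statement. That step does not fail: it can be discharged either by citing such a product/join theorem, or, more in the spirit of the paper, by taking a product of factorwise log resolutions (a product of snc pairs is snc) and applying \cite[Corollary 2.31]{KoMo}, after which the ``factorwise combination'' claim reduces to precisely the paper's block-weight computation. So your outline is correct and genuinely different in organization, but a complete write-up must either pin down that citation or reproduce the computation the paper does.
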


\begin{proof}
\eqref{P_prop1}
Since $d_{(\pr^{n_i}, \Xi_i)}=n_i+1$, we have $d_{(X, \Gamma)}=n+1$. Take any 
$W\in\LL'(X, \Gamma)$. By Lemma \ref{S_lemma}, there exists 
$W_i\in\LL(\pr^{n_i}, \Xi_i)$
for any $1\leq i\leq s$ such that $W=\langle W_1,\dots,W_s\rangle$. 
Since $(\pr^{n_i}, \Xi_i)$ is klt, we have $d_{(\pr^{n_i}, \Xi_i)}(W_i)<c^{\pr^{n_i}}(W_i)$ if 
$\emptyset\subsetneq W_i\subsetneq \pr^{n_i}$. On the other hand, we have 
$d_{(\pr^{n_i}, \Xi_i)}(\emptyset)=c^{\pr^{n_i}}(\emptyset)=n_i+1$ and 
$d_{(\pr^{n_i}, \Xi_i)}(\pr^{n_i})=c^{\pr^{n_i}}(\pr^{n_i})=0$. Thus, by Lemma \ref{S_lemma}, 
we have 
\[
\frac{d_{(X, \Gamma)}(W)}{c^X(W)}=\frac{\sum_{i=1}^s d_{(\pr^{n_i}, \Xi_i)}(W_i)}{\sum_{i=1}^s 
c^{\pr^{n_i}}(W_i)}\leq 1.
\]
Thus $(X, \Gamma)$ is lc by Theorem \ref{mustata_thm}. 

\eqref{P_prop2}
Note that $W\in\LL'(X, \Gamma)$ is an lc center of $(X, \Gamma)$ if and only if the above 
inequality is equal. The condition is equivalent to the condition $W_i\in\{\emptyset$, $
\pr^{n_i}\}$ for all $1\leq i\leq s$. The condition is nothing but $W=P_I$ for some 
$\emptyset\subsetneq I\subsetneq \{1,\dots,s\}$. 

\eqref{P_prop3}
For any $0\leq m\leq n-1$, we set 
\[
\sW_m:=\{P_I\,|\,I\subset\{1,\dots,s\}, \, \dim P_I=m\}. 
\]
Let us consider the sequence of blowups 
\[
Y:=X_n\to X_{n-1}\to\cdots\to X_1\to X_0=X
\]
such that $X_{m+1}\to X_m$ is the blowup along the strict transform of 
$\bigcup_{P_I\in\sW_m}P_I$ (this is a smooth subscheme of $X_m$). 
Let $D\subset Y$ be the reduced simple normal crossing divisor given by 
the inverse image of $\bigcup_{I\subsetneq\{1\dots,s\}}P_I$. 
Then, by \cite[Lemma 2.1]{teitler} (see also \cite[Lemma 2.29 and Corollary 2.31]{KoMo}), 
any prime divisor $F$ over $X$ with 
$A_{(X, \Gamma)}(F)=0$ is given by the composition of toroidal blowups with respects 
to $(Y, D)$ (in the sense of \cite[2.16 (2)]{BAB}). 
In particular, since each $P_I$ is torus invariant, after perturbing $\pr^{n_1},\dots,\pr^{n_s}$ 
and homogeneous coordinates $z_{i0}:\cdots:z_{in_i}$ 
if necessary, we may assume that 
the image of $F$ on $X$ is equal to $P_{\{1,\dots,r\}}$ for some $1\leq r\leq s-1$ and 
$\ord_F$ is equal to the quasi-monomial 
valuation on $X\setminus(z_{10}=0)$ for coordinates 
$(x_{ij}:=z_{ij}/z_{10})$ with weights 
\[
\begin{cases}
0 & \text{if }\, 1\leq i\leq r, \\
a_{i,j} & \text{if }\, r+1\leq i\leq s,
\end{cases}\]
where 
\begin{eqnarray*}
&&a_{i,0}\leq\cdots\leq a_{i,n_i}\quad\text{for all }\,r+1\leq i\leq s, \\
&&0<a_{r+1, 0}\leq\cdots\leq a_{s,0}, \\
&&\gcd(a_{i,j})=1.
\end{eqnarray*}
In this situation, we have $A_{(X, 0)}(F)=\sum_{r+1\leq i\leq s, 0\leq j\leq n_i}a_{i,j}$. 
If $1\leq i\leq r$, then obviously $\ord_F\Gamma_i=0$ holds. 
Assume that $r+1\leq i\leq s$ and $a_{i,0}=\cdots=a_{i,n_i}$. Then we have 
\[
\ord_F\Gamma_i=(n_i+1)a_{i,0}=\sum_{0\leq j\leq n_i}a_{i,j}. 
\]

From now on, we assume that $a_{i,0}<a_{i,n_i}$. Write 
$\Xi_i=\sum_{k=1}^{m_i}d_{i,k}\Xi_{i,k}$, $\Gamma_i=\sum_{k=1}^{m_i}d_{i,k}\Gamma_{i,k}$ 
with $\Gamma_{i,k}:=\langle\Xi_{i,k}, Q_i\rangle$. 
Assume that the defining equation of $\Xi_{i,k}\subset\pr^{n_i}$ is 
\[
h_{i,0}^kz_{i0}+\cdots+h_{i,n_i}^kz_{in_i}=0.
\]
Then 
\[
\ord_F\Gamma_{i,k}=a_{i,j_k}:=\min_{0\leq j\leq n_i}\{a_{i,j}\,|\, h_{i,j}^k\neq 0\}
\]
and $\ord_F\Gamma_i=\sum_{k=1}^{m_i}d_{i,k}a_{i,j_k}$.

Let us consider the quasi-monomial valuation $v_i$ on $\pr^{n_i}\setminus(z_{i0}=0)$ 
for coordinates $(y_{ij}:=z_{ij}/z_{i0})_{1\leq j\leq n_i}$ with weights 
$(a_{i,1}-a_{i,0},\dots,a_{i,n_i}-a_{i,0})$. Then 
\begin{eqnarray*}
A_{(\pr^{n_i}, 0)}(v_i) & = & \sum_{j=1}^{n_i}(a_{i,j}-a_{i,0})=\sum_{j=0}^{n_i}a_{i,j}-(n_i+1)a_{i,0}, \\
v_i(\Xi_{i,k}) & = & a_{i, j_k}-a_{i,0}, \\
A_{(\pr^{n_i}, \Xi_i)}(v_i) & = & \sum_{j=0}^{n_i}a_{i,j}-\ord_F\Gamma_i.
\end{eqnarray*}
Since $0<A_{(\pr^{n_i}, \Xi_i)}(v_i)$, we have $\sum_{j=0}^{n_i}a_{i,j}>\ord_F\Gamma_i$. 
However, we know that 
\[
A_{(X, \Gamma)}(F)=\sum_{i=r+1}^s\left(\sum_{0\leq j\leq n_i}a_{i,j}-\ord_F\Gamma_i\right)
\]
is equal to zero. This leads to a contradiction. Therefore we have 
$a_{i,0}=\cdots=a_{i,n_i}$ for any $r+1\leq i\leq s$. 
\end{proof}

\begin{proposition}\label{suff_prop}
Let $(X, \Gamma)$ be an $n$-dimensional log Calabi-Yau hyperplane arrangement 
of class $\sP$, let $d\in\Q\cap(0, n+1)$ and let $F$ be a prime divisor over $X$ with 
$A_{(X, \Gamma)}(F)=0$. Set 
\[
\Delta:=\frac{d}{n+1}\Gamma. 
\]
Then $(X, \Delta)$ is a log Fano pair and $F$ is a product-type prime divisor over 
$(X, \Delta)$.
\end{proposition}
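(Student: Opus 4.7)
The plan is to reduce the product-type assertion, via Proposition \ref{dream_prod_prop}, to constructing an explicit one-parameter subgroup $\rho \colon \G_m \to \Aut(X, \Delta)$ whose induced valuation on $X$ equals $\ord_F$. The log Fano assertion is immediate: by Proposition \ref{P_prop}(1), $K_X + \Gamma \sim_\Q 0$, so $-(K_X + \Delta) \sim_\Q \frac{n+1-d}{n+1}\Gamma$ is ample for $d \in (0, n+1)$, and $(X, \Delta)$ is klt because $(X, \Gamma)$ is lc while $\frac{d}{n+1} < 1$.

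The main input is Proposition \ref{P_prop}(2)--(3): since $A_{(X, \Gamma)}(F) = 0$, the center of $F$ on $X$ must be $P_I$ for some $\emptyset \subsetneq I \subsetneq \{1,\ldots,s\}$, and after fixing $i_0 \in I$ the valuation $\ord_F$ is the quasi-monomial valuation on $X \setminus (z_{i_0 0} = 0)$ whose weight on $x_{ij}$ depends only on $i$, being $a_i \in \Z_{>0}$ if $i \notin I$ (with $\gcd_{i \notin I}(a_i) = 1$) and $0$ if $i \in I$. I would then define
\[
\rho \colon \G_m \to \PGL(n+1), \qquad \rho_t(z_{ij}) = t^{-a_i} z_{ij},
\]
setting $a_i := 0$ for $i \in I$. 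Since some $a_i > 0$, this $\rho$ is a nontrivial one-parameter subgroup of $\Aut(X)$, and the computation of Example \ref{aut_P_ex}, applied in the affine chart $(z_{i_0 0} \neq 0)$, shows that $\ord_{(t^{-1})} \circ \rho^*$ is exactly the quasi-monomial valuation described above, hence equal to $\ord_F$.

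What remains is to check that $\rho_* \Delta = \Delta$, equivalently $\rho_* \Gamma = \Gamma$. Every component of $\Gamma$ has the form $\langle \Xi_{i,k}, Q_i \rangle$, and its defining equation in $\pr$ is $\sum_{j=0}^{n_i} h_{i,j}^k z_{ij} = 0$ for a single fixed $i$; under $\rho_t$ all of those coordinates are rescaled by the same factor $t^{-a_i}$, so the equation is preserved up to scalar and the divisor is fixed. Summing over components gives $\rho_* \Gamma = \Gamma$, so $\rho$ factors through $\Aut(X, \Delta)$, and Proposition \ref{dream_prod_prop} then yields that $F$ is a product-type prime divisor over $(X, \Delta)$. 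The one subtle point is precisely this invariance step: it crucially uses the ``block'' shape of a class-$\sP$ arrangement, namely that every divisorial component of $\Gamma$ involves only one block of coordinates, together with the matching ``block-constant'' weight description of $\ord_F$ that is forced by $A_{(X, \Gamma)}(F) = 0$ in Proposition \ref{P_prop}(3). Without that combinatorial compatibility, a generic quasi-monomial valuation with the same center would fail to come from an automorphism preserving $\Gamma$.
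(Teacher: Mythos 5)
Your proposal is correct and follows essentially the same route as the paper: both invoke Proposition \ref{P_prop} to get the block-constant quasi-monomial description of $\ord_F$ forced by $A_{(X,\Gamma)}(F)=0$, build the same diagonal one-parameter subgroup via Example \ref{aut_P_ex}, verify componentwise that $\rho_t$ preserves each $\langle\Xi_{i,k},Q_i\rangle$ (hence $\Gamma$), and conclude through Proposition \ref{dream_prod_prop}. The only cosmetic difference is that the paper relabels so that $I=\{1,\dots,r\}$ and $i_0=1$, while you work with a general $I$ and $i_0\in I$.
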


\begin{proof}
It is obvious from Proposition \ref{P_prop} such that $(X, \Delta)$ is a log Fano pair 
and $F$ is a dreamy prime divisor over $(X, \Delta)$. 
As in Proposition \ref{P_prop}, we may assume that 
$(X, \Gamma)=S((\pr^{n_1}, \Xi_1),\dots,(\pr^{n_s}, \Xi_s))$ with $(\pr^{n_i}, \Xi_i)$ 
klt Calabi-Yau hyperplane arrangements for all $1\leq i\leq s$, the image of $F$ on $X$ is 
equal to $P_{\{1,\dots,r\}}$ for some $1\leq r\leq s-1$ and $\ord_F$ is the 
quasi-monomial valuation on $X\setminus(z_{10}=0)$ for coordinates $(x_{ij}:=z_{ij}/z_{10})$ 
with weights 
\[\begin{cases}
0 & \text{if }\, 1\leq i\leq r, \\
a_i & \text{if }\, r+1\leq i\leq s
\end{cases}\]
with $0<a_{r+1}\leq\cdots\leq a_s$ and $\gcd(a_i)=1$. 
Then, by Example \ref{aut_P_ex}, $F$ is given by the one-parameter subgroup 
\begin{eqnarray*}
\rho\colon\G_m&\to&\Aut(X)\\ 
t&\mapsto&
\diag(1,\dots,1,\underbrace{t^{-a_{r+1}},\dots,t^{-a_{r+1}}}_{n_{r+1}+1 
\text{ times}},\dots\dots,\underbrace{t^{-a_s},\dots,t^{-a_s}}_{n_s+1 
\text{ times}}) 
\end{eqnarray*}
of $\Aut(X)$. It is enough to show that $\rho$ factors through $\Aut(X, \Delta)$. 
In fact, for all $t\in\G_m$ and for all $1\leq i\leq s$, 
if we set $\Gamma_i:=\langle\Xi_i, Q_i\rangle$, we get $\rho_t^*\Gamma_i=\Gamma_i$. 
In particular, we have $\rho_t^*\Gamma=\Gamma$
\end{proof}

We give a combinatorial characterization of log Calabi-Yau hyperplane arrangement of 
class $\sP$. 

\begin{thm}\label{comb_thm}
Let $(X, \Gamma)$ be an $n$-dimensional lc Calabi-Yau hyperplane arrangement. 
Assume that, for any lc center $P\subset X$ of $(X, \Gamma)$ with $\dim P=c-1$, there 
exists a linear subspace $Q=Q(P)\subset X$ with $\dim Q=n-c$ and 
$P\cap Q=\emptyset$ such that $d_{(X, \Gamma)}(Q)=c$. 
Then $(X, \Gamma)$ is a log Calabi-Yau hyperplane arrangement of class $\sP$.
\end{thm}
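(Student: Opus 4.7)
The plan is to proceed by induction on $n = \dim X$, with the case $n = 0$ being trivial. For the inductive step, if $(X, \Gamma)$ is klt then it is of class $\sP$ with $s = 1$, so we may assume it has a proper lc center $P$; set $p := \dim P$ and apply the hypothesis to produce $Q \subset X$ of dimension $n - p - 1$, disjoint from $P$, with $d_{(X, \Gamma)}(Q) = p + 1$. Then $Q$ is itself an lc center linearly complementary to $P$ in $\pr^n$. Since no hyperplane of $\pr^n$ contains both $P$ and $Q$ (they span $X$), and since $d_{(X, \Gamma)}(P) + d_{(X, \Gamma)}(Q) = n + 1 = d_{(X, \Gamma)}$, every component of $\Gamma$ contains exactly one of $P$, $Q$. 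Grouping by this dichotomy and using the linear projections from $Q$ to $P$ and from $P$ to $Q$ produces hyperplane arrangements $\Xi_P$ on $P$ and $\Xi_Q$ on $Q$ with $(X, \Gamma) = S((P, \Xi_P), (Q, \Xi_Q))$, and a short computation using Lemma~\ref{S_lemma} verifies that both $(P, \Xi_P)$ and $(Q, \Xi_Q)$ are again lc Calabi-Yau hyperplane arrangements.

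Granting that both summands inherit the hypothesis of the theorem, the induction hypothesis makes each of them of class $\sP$, and concatenating their $S$-expressions gives the same for $(X, \Gamma)$. The main obstacle is to verify this inheritance; by symmetry, we focus on $(P, \Xi_P)$. Given an lc center $P' \subsetneq P$ of $(P, \Xi_P)$ with $\dim P' = p'$, the target is a subspace $Q' \subset P$ with $\dim Q' = p - p' - 1$, $P' \cap Q' = \emptyset$, and $d_{(P, \Xi_P)}(Q') = p' + 1$.

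The key idea --- and the only nontrivial step --- is to apply the hypothesis for $(X, \Gamma)$ not to $P'$ directly, but to the enlarged subspace $R := \langle P', Q \rangle \subset X$. No component of $\Gamma$ containing $P$ can contain $R$ (otherwise it would also contain $Q$ and hence equal $X$), while a component of $\Gamma$ containing $Q$ contains $R$ iff its restriction to $P$ contains $P'$; together with the lc-center condition for $P'$ in $(P, \Xi_P)$ this yields $d_{(X, \Gamma)}(R) = p - p' = c^X(R)$, so $R$ is an lc center of $(X, \Gamma)$. Applying the hypothesis to $R$ then produces an lc center $Q' \subset X$ of dimension $p - p' - 1$ with $Q' \cap R = \emptyset$. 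Since $Q \subset R$ we have $Q \cap Q' = \emptyset$, and writing $Q' = \langle Q'_P, Q'_Q \rangle$ with $Q'_P \subset P$, $Q'_Q \subset Q$ via Lemma~\ref{S_lemma} forces $Q'_Q = Q' \cap Q = \emptyset$, so $Q' \subset P$. Applying Lemma~\ref{S_lemma} once more identifies $Q'$ as an lc center of $(P, \Xi_P)$, giving $d_{(P, \Xi_P)}(Q') = c^P(Q') = p' + 1$; and $Q' \cap P' \subset Q' \cap R = \emptyset$. This furnishes the required $Q'$ and closes the induction.
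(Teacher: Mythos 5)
Your proof is correct, and its skeleton is the paper's: split $(X,\Gamma)=S((P,\Xi_P),(Q,\Xi_Q))$ along an lc center $P$ and the complement $Q$ furnished by the hypothesis, check via Lemma \ref{S_lemma} that both factors are lc Calabi--Yau, show the hypothesis passes to the factors, and induct (the paper takes $P$ to be a \emph{minimal} lc center, so that $(P,\Xi_P)$ is klt and only the $Q$-side needs the induction, whereas you take $P$ arbitrary and induct on both factors; this difference is immaterial). Where you genuinely diverge is the inheritance step, which you rightly call the heart of the proof. The paper applies the hypothesis to the lc center $P'\subset P$ itself, obtaining $Q(P')$, and must then prove $Q(P)\subset Q(P')$ so that $P\cap Q(P')$ can serve as the complement inside $P$; this forces it to first establish uniqueness of the complement, $Q(P)=\bigcap_{H\subset\Supp\Gamma_P}H$ (its Step 3, resting on Lemma \ref{CYempt_lemma}), together with the monotonicity $\Gamma_{P'}\leq\Gamma_P$ (its Step 4). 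You instead apply the hypothesis to the join $R=\langle P',Q\rangle$, which your coefficient count (the paper's Step 2 computation read in reverse) exhibits as an lc center of $(X,\Gamma)$, and then disjointness from $R\supset Q$ plus Lemma \ref{S_lemma} forces $Q(R)\subset P$, so $Q(R)$ itself is the required complement; this bypasses the uniqueness statement, the monotonicity argument, and Lemma \ref{CYempt_lemma} altogether. One point you should write down explicitly: you use repeatedly (for $Q$, for $R$, and for $Q(R)$, before Lemma \ref{S_lemma} can be invoked on them) that a linear subspace $W$ with $d_{(X,\Gamma)}(W)=c^X(W)$ automatically lies in $\LL'(X,\Gamma)$ and is an lc center. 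This is true but needs a line: letting $W'$ be the intersection of all components of $\Gamma$ containing $W$, the components containing $W$ and those containing $W'$ coincide, so $d_{(X,\Gamma)}(W')=d_{(X,\Gamma)}(W)=c^X(W)\geq c^X(W')\geq d_{(X,\Gamma)}(W')$ by lc-ness, forcing $W'=W$; the lc-center claim then follows by blowing up $W$. Since the paper leans on the same background identification of lc centers with such subspaces, this is a completeness remark rather than a gap.
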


\begin{proof}
\textbf{Step 1.}
We firstly note that there is no hyperplane $H$ on $X$ with $P$, $Q\subset H$. 
Write $\Gamma=\sum_H d_H H$. We set 
$\Gamma_P:=\sum_{P\not\subset H}d_H H$ and $\Gamma_Q:=\sum_{P\subset H}d_H H$. 
Obviously, we have $\Gamma=\Gamma_P+\Gamma_Q$, and $\Gamma_P$, $\Gamma_Q$ 
are uniquely determined by $P$. Since $\Gamma_P$ and $\Gamma_Q$ have no 
common irreducible component, $d_{(X, \Gamma_P)}\geq c$ and 
$d_{(X, \Gamma_Q)}\geq n+1-c$, any component of $\Gamma_P$ contains $Q$. 
Set $\Xi_P:=\Gamma_P\cap P$ and $\Xi_Q:=\Gamma_Q\cap Q$. Then both 
$(P, \Xi_P)$ and $(Q, \Xi_Q)$ are log Calabi-Yau hyperplane arrangements and 
$\Gamma_P=\langle \Xi_P, Q\rangle$, $\Gamma_Q=\langle \Xi_Q, P\rangle$. 
In particular, we have $(X, \Gamma)=S((P, \Xi_P), (Q, \Xi_Q))$. 

\textbf{Step 2.}
Take an arbitrary $P'\in\LL'(P, \Xi_P)$. Then, by Lemma \ref{S_lemma}, 
we have $P'$, $\langle P', Q\rangle\in\LL'(X, \Gamma)$. Moreover, we have 
\begin{eqnarray*}
c^X(P')=c^P(P')+n+1-c, && c^X\left(\langle P', Q\rangle\right)=c^P(P'), \\
d_{(X, \Gamma)}(P')=d_{(P, \Xi_P)}(P')+n+1-c, && d_{(X, \Gamma)}\left(
\langle P', Q\rangle\right)=d_{(P, \Xi_P)}(P').
\end{eqnarray*}
Thus we get 
\[
\frac{d_{(P, \Xi_P)}(P')}{c^P(P')}=\frac{d_{(X, \Gamma)}\left(
\langle P', Q\rangle\right)}{c^X\left(\langle P', Q\rangle\right)}\leq 1.
\]
It follows that $(P, \Xi_P)$ is an lc Calabi-Yau hyperplane arrangement. 

\textbf{Step 3.}
By Lemma \ref{CYempt_lemma}, we have 
$\bigcap_{H'\subset\Supp \Xi_P}H'=\emptyset$.
This implies that $Q=\bigcap_{H\subset\Supp\Gamma_P}H$. 
Hence $Q=Q(P)$ is uniquely determined by $P$. In particular, we have $Q(Q(P))=P$ and 
$(Q, \Xi_Q)$ is an lc Calabi-Yau hyperplane arrangement. 

\textbf{Step 4.}
Take an arbitrary $(c'-1)$-dimensional 
lc center $P'\subset X$ of $(X, \Gamma)$ with $P'\subset P$. 
Set $Q':=Q(P')$. Let us set $\Gamma_{P'}$, $\Gamma_{Q'}$ as in Step 1. From the 
construction, we have $\Gamma_Q\leq \Gamma_{Q'}$ and $\Gamma_P\geq \Gamma_{P'}$. 
Thus we have 
\[
Q=\bigcap_{H\subset\Supp\Gamma_P}H\subset\bigcap_{H\subset\Supp\Gamma_{P'}}H
=Q'.
\]
Moreover, it is obvious that $c^P(P\cap Q')=d_{(X, \Gamma_P)}(Q')=c'$. 

\textbf{Step 5.}
Take an arbitrary $(c'-1)$-dimensional 
lc center $P'\subset P$ of $(P, \Xi_P)$. Then, by the argument in Step 2, $P'\subset X$ 
is an lc center of $(X, \Gamma)$. Set $Q':=Q(P')$ as in Step 4. Then it follows from 
Step 4 that $c^P(P\cap Q')=d_{(P, \Xi_P)}(P\cap Q')=c'$ since $\langle P\cap Q', 
Q\rangle=Q'$. Therefore, $(P, \Xi_P)$ satisfies the assumption of 
Theorem \ref{comb_thm}. 

\textbf{Step 6.}
Let $P\subset X$ be an minimal lc center of $(X, \Gamma)$. Then, by the argument 
in Step 2, $(P, \Xi_P)$ is a klt Calabi-Yau hyperplane arrangement. Moreover, 
we can inductively show that, there exist klt Calabi-Yau hyperplane arrangements 
$(Q_1, \Xi_1),\dots,(Q_s,\Xi_s)$ such that 
$(Q,\Xi_Q)=S((Q_1,\Xi_1),\dots,(Q_s,\Xi_s))$. 
Consequently, we have 
$(X, \Gamma)=S((P, \Xi_P), (Q_1,\Xi_1),\dots,(Q_s,\Xi_s))$. 
\end{proof}

\section{On uniform K-stability}\label{unif_section}

We give a sufficient condition for log Fano pairs being uniformly K-stable or 
K-semistable. 

\begin{proposition}\label{technical_prop}
Let $(X, \Delta)$ be a K-semistable log Fano pair, set $L:=-(K_X+\Delta)$, 
let $r$ be a rational number with $r\in(0,1)$, 
and let $B$ be an effective $\Q$-divisor on $X$ with $B\sim_\Q rL$. 
Set $a:=\lct(X, \Delta; B)$. Assume that $a\geq r^{-1}$. $($Then $(X, \Delta+B)$ is a 
log Fano pair.$)$
\begin{enumerate}
\renewcommand{\theenumi}{\arabic{enumi}}
\renewcommand{\labelenumi}{(\theenumi)}
\item\label{technical_prop1}
$(X, \Delta+B)$ is K-semistable. 
Moreover, if a prime divisor $F$ over $X$ satisfies that $\hat{\beta}_{(X, \Delta+B)}(F)=0$, 
then we have $A_{(X, \Delta+aB)}(F)=0$, $\hat{\beta}_{(X, \Delta)}(F)=0$ and $a=r^{-1}$. 
\item\label{technical_prop2}
If $(X, \Delta)$ is uniformly K-stable or $a>r^{-1}$, then 
$(X, \Delta+B)$ is uniformly K-stable. 
\end{enumerate}
\end{proposition}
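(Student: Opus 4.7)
The plan is to apply the valuative criterion of Theorem~\ref{beta_thm} by comparing $\hat\beta_{(X,\Delta+B)}(F)$ directly with $\hat\beta_{(X,\Delta)}(F)$ for each prime divisor $F$ over $X$. First I would verify that $(X,\Delta+B)$ is a log Fano pair: $-(K_X+\Delta+B)\sim_\Q(1-r)L$ is ample because $r\in(0,1)$, and $a\geq r^{-1}>1$ combined with the lc-ness of $(X,\Delta+aB)$ forces $(X,\Delta+B)$ to be klt (the log discrepancy $A_{(X,\Delta+B)}(F)=A_{(X,\Delta)}(F)-\ord_F(B)\geq (a-1)\ord_F(B)$ is strictly positive for every $F$).

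The core computation is a volume rescaling. Writing $L':=-(K_X+\Delta+B)\sim_\Q(1-r)L$, the identity $\vol_X((1-r)L-xF)=(1-r)^n\vol_X(L-\tfrac{x}{1-r}F)$ together with the substitution $y=x/(1-r)$ yields
\[
\hat\beta_{(X,\Delta+B)}(F) = 1 - \frac{(1-r)\int_0^\infty\vol_X(L-yF)\,dy}{A_{(X,\Delta+B)}(F)\cdot(L^{\cdot n})}.
\]
Combining the identity $A_{(X,\Delta+B)}(F)=A_{(X,\Delta)}(F)-\ord_F(B)$, the K-semistability bound $\int_0^\infty\vol_X(L-yF)\,dy \leq A_{(X,\Delta)}(F)(L^{\cdot n})$ coming from $\hat\beta_{(X,\Delta)}(F)\geq 0$, and the log canonical threshold inequality $\ord_F(B)\leq a^{-1}A_{(X,\Delta)}(F)$, a routine manipulation yields
\[
\hat\beta_{(X,\Delta+B)}(F) \;\geq\; \frac{rA_{(X,\Delta)}(F)-\ord_F(B)}{A_{(X,\Delta+B)}(F)} \;\geq\; \frac{(r-a^{-1})A_{(X,\Delta)}(F)}{A_{(X,\Delta+B)}(F)} \;\geq\; 0,
\]
which establishes K-semistability of $(X,\Delta+B)$ via Theorem~\ref{beta_thm}\eqref{beta_thm1}. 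For the equality clause of (1), vanishing of $\hat\beta_{(X,\Delta+B)}(F)$ forces equality throughout: the first step gives $\hat\beta_{(X,\Delta)}(F)=0$, while the chain $\ord_F(B)=rA_{(X,\Delta)}(F)=a^{-1}A_{(X,\Delta)}(F)$ forces $a=r^{-1}$ and $A_{(X,\Delta+aB)}(F)=0$.

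For part (2) I would refine the estimate in two parallel ways. If $(X,\Delta)$ is uniformly K-stable with some constant $\varepsilon>0$ from Theorem~\ref{beta_thm}\eqref{beta_thm2}, then substituting the sharper bound $\int_0^\infty\vol_X(L-yF)\,dy\leq(1-\varepsilon)A_{(X,\Delta)}(F)(L^{\cdot n})$, combined with $\ord_F(B)\leq rA_{(X,\Delta)}(F)$ and $A_{(X,\Delta+B)}(F)\leq A_{(X,\Delta)}(F)$, yields $\hat\beta_{(X,\Delta+B)}(F)\geq\varepsilon(1-r)$. If instead $(X,\Delta)$ is only K-semistable but $a>r^{-1}$, then the middle inequality of the previous display already gives $\hat\beta_{(X,\Delta+B)}(F)\geq r-a^{-1}$. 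Both lower bounds are strictly positive and independent of $F$, so Theorem~\ref{beta_thm}\eqref{beta_thm2} delivers uniform K-stability of $(X,\Delta+B)$.

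The only technical point is the volume rescaling identity; everything else amounts to careful bookkeeping among $A_{(X,\Delta)}(F)$, $A_{(X,\Delta+B)}(F)$, $\ord_F(B)$, $r$ and $a$. No serious obstacle is anticipated, since Theorem~\ref{beta_thm} handles all prime divisors uniformly and all the inequalities stack cleanly.
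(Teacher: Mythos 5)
Your proof is correct and follows essentially the same route as the paper: the volume rescaling identity $\hat{\beta}_{(X,\Delta+B)}(F)=1-\tfrac{(1-r)\int_0^\infty\vol_X(L-yF)\,dy}{A_{(X,\Delta+B)}(F)\,(L^{\cdot n})}$, the lc-threshold bound $\ord_F(B)\leq a^{-1}A_{(X,\Delta)}(F)$ (equivalently $A_{(X,\Delta+B)}(F)\geq(1-a^{-1})A_{(X,\Delta)}(F)$), and the valuative criterion of Theorem \ref{beta_thm}, with your case-split in part (2) being only a cosmetic rearrangement of the paper's single chain of inequalities.
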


\begin{proof}
Set $n:=\dim X$. 
By Theorem \ref{beta_thm}, there exists $\varepsilon\geq 0$ such that 
$\hat{\beta}_{(X, \Delta)}(F)\geq \varepsilon$ holds for any prime divisor $F$ over $X$. 
Moreover, if $(X, \Delta)$ is uniformly K-stable, then we can take $\varepsilon$ to 
be positive. Note that $0\leq A_{(X, \Delta+aB)}(F)$. Thus we get 
\[
A_{(X, \Delta+B)}(F)\geq (1-a^{-1})A_{(X, \Delta)}(F).
\]
(Moreover, equality holds if and only if $A_{(X, \Delta+aB)}(F)=0$.)
This implies that 
\begin{eqnarray*}
\hat{\beta}_{(X, \Delta+B)}(F) &=& 1-\frac{(1-r)^{n+1}\int_0^\infty
\vol_X(L-xF)dx}{A_{(X, \Delta+B)}(F)\cdot(1-r)^n(L^{\cdot n})}\\
&\geq&1-\frac{1-r}{1-a^{-1}}\left(1-\hat{\beta}_{(X,\Delta)}(F)\right)\\
&\geq&\frac{1-r}{1-a^{-1}}\varepsilon+\frac{r-a^{-1}}{1-a^{-1}}.
\end{eqnarray*}
Thus we get the assertion. 
\end{proof}

\begin{corollary}\label{suff_cor}
Let $(X, \Delta)$ be an $n$-dimensional log Fano hyperplane arrangement with 
$\Delta\neq 0$. Set 
\[
\Gamma:=\frac{n+1}{d_{(X, \Delta)}}\Delta.
\]
If $(X, \Gamma)$ is an lc $($resp., a klt$)$ Calabi-Yau hyperplane arrangement, then 
$(X, \Delta)$ is K-semistable $($resp., uniformly K-stable$)$. 
\end{corollary}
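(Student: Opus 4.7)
The plan is to deduce the corollary as a direct application of Proposition \ref{technical_prop}, taking the starting K-semistable log Fano pair to be $(\pr^n,0)$ itself and the added boundary $B$ to be $\Delta$. Recall from Remark \ref{mainthm_rmk} \eqref{mainthm_rmk2} that $(\pr^n,0)$ is K-semistable (but not uniformly K-stable); this is the only input from outside the paper that we need.

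First I would set $L_0:=-K_X=\sO_{\pr^n}(n+1)$ and $r:=d_{(X,\Delta)}/(n+1)$. Since $(X,\Delta)$ is a log Fano pair with $\Delta\neq 0$, the ampleness of $-(K_X+\Delta)$ together with the positivity of $\Delta$ forces $0<d_{(X,\Delta)}<n+1$, hence $r\in(0,1)\cap\Q$. A degree count on $\pr^n$ gives $\Delta\sim_\Q rL_0$, so $B:=\Delta$ satisfies the hypothesis of Proposition \ref{technical_prop}.

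Next I would translate the Calabi-Yau hypotheses on $\Gamma=r^{-1}\Delta$ into the required inequality on the log canonical threshold $a:=\lct(X,0;\Delta)$. By Theorem \ref{mustata_thm},
\[
a=\min_{W\in\LL'(X,\Delta)}\frac{c^X(W)}{d_{(X,\Delta)}(W)},
\]
and the pair $(X,\Gamma)=(X,r^{-1}\Delta)$ is lc (resp.\ klt) if and only if $a\geq r^{-1}$ (resp.\ $a>r^{-1}$). Thus the assumption of Proposition \ref{technical_prop} is exactly the lc Calabi-Yau hypothesis, and its strict version is exactly the klt Calabi-Yau hypothesis.

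Finally I would invoke Proposition \ref{technical_prop}: part \eqref{technical_prop1} immediately gives K-semistability of $(X,0+B)=(X,\Delta)$ in the lc case, while part \eqref{technical_prop2} gives uniform K-stability in the klt case, via the alternative $a>r^{-1}$. There is no serious obstacle here; the one point worth emphasizing is that we \emph{must} use the strict-inequality branch of Proposition \ref{technical_prop} \eqref{technical_prop2}, since the other branch would require $(\pr^n,0)$ to be uniformly K-stable, which it is not. This is precisely why the klt hypothesis on $(X,\Gamma)$ (rather than merely lc) is what yields uniform K-stability of $(X,\Delta)$.
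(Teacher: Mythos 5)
Your proposal is correct and is essentially identical to the paper's own proof, which likewise takes $(\pr^n,0)$ as the K-semistable base pair (citing Remark \ref{mainthm_rmk} \eqref{mainthm_rmk2}) and applies Proposition \ref{technical_prop} with $B=\Delta$; you have merely spelled out the routine verifications ($r\in(0,1)\cap\Q$, $\Delta\sim_\Q rL_0$, and the translation of the lc/klt Calabi--Yau hypotheses into $a\geq r^{-1}$ resp.\ $a>r^{-1}$) that the paper leaves implicit. Your remark that the klt case must go through the $a>r^{-1}$ branch of Proposition \ref{technical_prop} \eqref{technical_prop2}, since $(\pr^n,0)$ is not uniformly K-stable, is exactly the right point of emphasis.
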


\begin{proof}
By Remark \ref{mainthm_rmk} \eqref{mainthm_rmk2}, $(\pr^n,0)$ is K-semistable. 
Thus the assertion immediately follows from Proposition \ref{technical_prop}.  
\end{proof}

\section{On K-polystability}\label{poly_section}

We analyze K-polystability of log Fano hyperplane arrangements. 

\begin{corollary}\label{poly_s_cor}
Let $(X, \Gamma)$ be an $n$-dimensional log Calabi-Yau hyperplane arrangement 
of class $\sP$. Take any $d\in\Q\cap(0,n+1)$ and set 
\[
\Delta:=\frac{d}{n+1}\Gamma.
\]
Then $(X, \Delta)$ is a K-polystable log Fano pair. 
\end{corollary}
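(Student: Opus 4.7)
The plan is to verify condition \eqref{stps_thm22} of Theorem \ref{stps_thm}\eqref{stps_thm2}. First, by Proposition \ref{P_prop}\eqref{P_prop1}, $(X,\Gamma)$ is an lc Calabi--Yau hyperplane arrangement, so $\lct(X,0;\Gamma)\geq 1$. Setting $r:=d/(n+1)\in(0,1)$ and $B:=\Delta=\frac{d}{n+1}\Gamma$, we have $B\sim_\Q r(-K_X)$ and
\[
a:=\lct(X,0;B)=r^{-1}\lct(X,0;\Gamma)\geq r^{-1}.
\]
Applying Proposition \ref{technical_prop}\eqref{technical_prop1} to the K-semistable pair $(\pr^n,0)$ (K-semistability of $\pr^n$ is recalled in Remark \ref{mainthm_rmk}\eqref{mainthm_rmk2}), we conclude that $(X,\Delta)$ is K-semistable; in particular $\hat{\beta}_{(X,\Delta)}(F)\geq 0$ for every prime divisor $F$ over $X$.

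Next, suppose $F$ is a dreamy prime divisor over $(X,\Delta)$ with $\hat{\beta}_{(X,\Delta)}(F)=0$. The refined conclusion of Proposition \ref{technical_prop}\eqref{technical_prop1} then yields
\[
A_{(X,aB)}(F)=A_{(X,\Gamma)}(F)=0,
\]
i.e.\ $F$ is an lc place of the Calabi--Yau pair $(X,\Gamma)$. Because $(X,\Gamma)$ is of class $\sP$, Proposition \ref{suff_prop} applies directly and exhibits $F$ as a product-type prime divisor over $(X,\Delta)$, via a one-parameter subgroup of $\Aut(X,\Delta)$ whose induced valuation is $\ord_F$. This verifies condition \eqref{stps_thm22} of Theorem \ref{stps_thm}\eqref{stps_thm2}, and hence $(X,\Delta)$ is K-polystable.

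The main obstacle is really just organizational: one must observe that Proposition \ref{technical_prop}\eqref{technical_prop1} delivers strictly more than bare K-semistability---in the boundary case $\hat{\beta}_{(X,\Delta)}(F)=0$ it pins $F$ down as an lc place of the class-$\sP$ pair $(X,\Gamma)$, and from there Proposition \ref{suff_prop} immediately supplies the required $\G_m$-action inside $\Aut(X,\Delta)$. No further volume computations or valuative estimates are needed beyond those already encoded in these prior results.
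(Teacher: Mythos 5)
Your proof is correct and follows essentially the same route as the paper: apply Proposition \ref{technical_prop} to the K-semistable pair $(\pr^n,0)$ to get K-semistability of $(X,\Delta)$ and to force any dreamy $F$ with $\hat{\beta}_{(X,\Delta)}(F)=0$ to be an lc place of $(X,\Gamma)$, then invoke Proposition \ref{suff_prop} and Theorem \ref{stps_thm}. The only (harmless) difference is organizational: the paper first reduces to the case $s\geq 2$ via Corollary \ref{suff_cor}, where $\lct(X,0;\Delta)=(n+1)/d$ holds exactly, whereas you treat both cases at once by observing that the equality conclusion of Proposition \ref{technical_prop} already includes $a=r^{-1}$, so the klt (uniformly K-stable) case is vacuous.
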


\begin{proof}
We can write $(X, \Gamma)=S((\pr^{n_1}, \Xi_1),\dots,(\pr^{n_s}, \Xi_s))$ with 
$(\pr^{n_i}, \Xi_i)$ klt Calabi-Yau hyperplane arrangements. By Corollary \ref{suff_cor}, 
we may assume that $s\geq 2$. Take an arbitrary dreamy prime divisor $F$ over 
$(X, \Delta)$. Since $s\geq 2$, we have $\lct(X, 0; \Delta)=(n+1)/d$. 
By Proposition \ref{technical_prop}, we have $\hat{\beta}_{(X, \Delta)}(F)\geq 0$. 
Moreover, if $\hat{\beta}_{(X, \Delta)}(F)=0$, then we have $A_{(X, \Gamma)}(F)=0$. 
By Proposition \ref{suff_prop}, $F$ is a product-type prime divisor over $(X, \Delta)$. 
Thus $(X, \Delta)$ is K-polystable by Theorem \ref{stps_thm}. 
\end{proof}

\begin{corollary}\label{ness_cor}
Let $(X, \Delta)$ be an $n$-dimensional K-polystable log Fano pair with $\Delta\neq 0$. 
Set 
\[
\Gamma:=\frac{n+1}{d_{(X, \Delta)}}\Delta.
\]
Then $(X, \Gamma)$ is a log Calabi-Yau hyperplane arrangement of class $\sP$. 
\end{corollary}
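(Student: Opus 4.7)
The plan is to verify the combinatorial criterion of Theorem \ref{comb_thm}. Since K-polystability implies K-semistability, Proposition \ref{blowup_prop} already ensures that $(X, \Gamma)$ is an lc Calabi-Yau hyperplane arrangement; what remains is to exhibit, for every lc center $P \subset X$ of $(X, \Gamma)$ with $\dim P = c - 1$, a linear subspace $Q \subset X$ with $\dim Q = n - c$, $P \cap Q = \emptyset$, and $d_{(X, \Gamma)}(Q) = c$.

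Given such an lc center $P$, I would let $F$ be the exceptional divisor of the blowup of $X$ along $P$. As observed in the proof of Proposition \ref{blowup_prop}, $F$ is dreamy over $(X, \Delta)$ because the blowup is toric, and repeating the $\hat{\beta}$-computation carried out there gives
\[
\hat{\beta}_{(X, \Delta)}(F) = \frac{c^X(P) \cdot d_{(X, \Delta)} - (n+1)\, d_{(X, \Delta)}(P)}{(n+1)\bigl(c^X(P) - d_{(X, \Delta)}(P)\bigr)}.
\]
Since $P$ is an lc center of $(X, \Gamma) = \tfrac{n+1}{d_{(X,\Delta)}}\Delta$, Theorem \ref{mustata_thm} yields $c^X(P)\cdot d_{(X, \Delta)} = (n+1)\, d_{(X, \Delta)}(P)$, so $\hat{\beta}_{(X, \Delta)}(F) = 0$. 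The K-polystability hypothesis combined with Theorem \ref{stps_thm}\eqref{stps_thm2} then forces $F$ to be a product-type prime divisor over $(X, \Delta)$, and Proposition \ref{dream_prod_prop} supplies a one-parameter subgroup $\rho \colon \G_m \to \Aut(X, \Delta) \subset \PGL(n+1)$ whose induced divisorial valuation is $\ord_F$.

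Every one-parameter subgroup of $\PGL(n+1)$ is diagonalizable, so $\rho$ induces a weight decomposition on the underlying $(n+1)$-dimensional vector space $V$. Since $\ord_F$ is the monomial valuation with weights in $\{0,1\}$ measuring vanishing along $P$, matching with Example \ref{aut_P_ex} forces $\rho$ to have exactly two distinct weights on $V$, with eigenspaces $E_0$ of dimension $c$ and $E_1$ of dimension $n+1-c$ satisfying $P = \pr(E_0)$; I then set $Q := \pr(E_1)$, which has dimension $n-c$ and satisfies $P \cap Q = \emptyset$. Because $\rho$ is connected and preserves $\Delta$, each irreducible component $H_i$ of $\Gamma$ is $\rho$-invariant, so its defining linear form is a $\rho$-eigenvector in $V^*$ and lies in either $E_0^*$ or $E_1^*$, equivalently $H_i$ contains exactly one of $P$ or $Q$. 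Summing coefficients over components of $\Gamma$ yields $d_{(X, \Gamma)}(P) + d_{(X, \Gamma)}(Q) = d_{(X, \Gamma)} = n+1$; combined with $d_{(X, \Gamma)}(P) = c^X(P) = n+1-c$ from Theorem \ref{mustata_thm}, this forces $d_{(X, \Gamma)}(Q) = c$, and Theorem \ref{comb_thm} concludes.

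The hard part will be the third paragraph, namely recovering the concrete linear subspace $Q$ from the abstract one-parameter subgroup $\rho$: one must argue that $\rho$, viewed as a torus action on $\pr^n$, is diagonal with exactly two weights on $V$, and identify the two eigenspaces with $P$ and $Q$. Once this geometric identification is in place, the divisor count $d_{(X,\Gamma)}(Q) = c$ is forced by the Calabi-Yau identity $d_{(X,\Gamma)} = n+1$ together with the dichotomy that each $\rho$-invariant hyperplane contains exactly one of $P$, $Q$.
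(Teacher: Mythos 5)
Your proposal is correct and follows the paper's own proof essentially step for step: blow up each lc center $P$, use the $\hat{\beta}$-computation of Proposition \ref{blowup_prop} to get $\hat{\beta}_{(X,\Delta)}(F)=0$, invoke Theorem \ref{stps_thm} and Proposition \ref{dream_prod_prop} to obtain a one-parameter subgroup $\rho\colon\G_m\to\Aut(X,\Delta)$ inducing $\ord_F$, identify the two eigenspaces of $\rho$ with $P$ and a complementary subspace $Q$, and conclude $d_{(X,\Gamma)}(Q)=c$ and apply Theorem \ref{comb_thm}. The only cosmetic difference is that you phrase the identification of $Q$ via the abstract weight decomposition of $V$, while the paper does the same computation in explicit homogeneous coordinates via Example \ref{aut_P_ex}.
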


\begin{proof}
By Proposition \ref{blowup_prop}, $(X, \Gamma)$ is an 
lc Calabi-Yau hyperplane arrangement. 
Take an arbitrary lc center $P\subset X$ of $(X, \Gamma)$. Set $c-1:=\dim P$. 
Note that $d_{(X, \Gamma)}(P)=n+1-c$. 
Let $\sigma\colon Y\to X$ be the blowup along $P$ and set $F:=\sigma^{-1}(P)$. 
Then, from the calculation in Proposition \ref{blowup_prop}, we get 
$\hat{\beta}_{(X, \Delta)}(F)=0$. Thus $F$ must be a product-type prime divisor over 
$(X, \Delta)$ since $F$ is a dreamy prime divisor over $(X, \Delta)$. 
Let us take homogeneous coordinates 
$z_0:\cdots:z_n$ of $X$ such that $P$ is defined by the equation 
$(z_c=\cdots=z_n=0)$ and 
$\ord_F$ is the quasi-monomial valuation on $X\setminus(z_0=0)$
for coordinates $(x_i:=z_i/z_0)$ with weights 
\[\begin{cases}
0 & \text{if }1\leq i\leq c-1,\\
1 & \text{if }c\leq i\leq n,
\end{cases}\] 
and $F$ corresponds to the one-parameter subgroup 
\begin{eqnarray*}
\rho\colon\G_m &\to & \Aut(X)\\
t&\mapsto&\diag(1,\dots,1,\underbrace{t^{-1},\dots,t^{-1}}_{n+1-c 
\text{ times}})
\end{eqnarray*}
(see Example \ref{aut_P_ex}).
Set $Q:=(z_0=\cdots=z_{c-1}=0)\subset X$. 
Take any component $H$ of $\Delta$ with $P\not\subset H$. Assume that 
$H$ is defined by the equation $h_0z_0+\cdots+h_nz_n=0$. 
Since $P\not\subset H$, we have $(h_0,\dots,h_{c-1})\neq (0,\dots,0)$. 
Since $F$ is a product-type prime divisor over $(X, \Delta)$, $\rho$ factors through 
$\Aut(X, \Delta)$. In particular, for all $t\in\G_m$, we have $\rho_t^*H=H$. 
Note that $\rho_t^*H\subset X$ is defined by the equation 
\[
h_0z_0+\cdots+h_{c-1}z_{c-1}+h_ct^{-1}z_c+\cdots+h_nt^{-1}z_n=0.
\]
Since $(h_0,\dots,h_{c-1})\neq (0,\dots,0)$, we have $(h_c,\dots,h_n)=(0,\dots,0)$. 
The condition is nothing but the condition $Q\subset H$. Thus we get 
$d_{(X, \Gamma)}(Q)=c$. From Theorem \ref{comb_thm}, we get the assertion. 
\end{proof}

\begin{proof}[Proof of Theorem \ref{mainthm}]
Immediately follows from Proposition \ref{blowup_prop} and Corollaries \ref{suff_cor}, 
\ref{poly_s_cor} and \ref{ness_cor}. 
\end{proof}

\begin{proof}[Proof of Corollary \ref{maincor}]
Immediately follows from Theorem \ref{mainthm}. 
\end{proof}

\section{A relation to GIT stability}\label{GIT_section}

In this section, let us consider an $n$-dimensional hyperplane arrangement 
$(X, \Delta)=(\pr^n, \sum_{i=1}^m d_i H_i)$ with $d_1,\dots,d_m\in\Q_{>0}$ 
and $\sum_{i=1}^md_i<n+1$. 
We do \emph{not} assume that $H_1,\dots,H_m$ are mutually distinct. 
Each hyperplane $H_i\subset\pr^n$ defines a point $p_i\in{\pr^n}^\vee$ in the 
dual projective space. We set $\vec{p}:=(p_1,\dots,p_m)\in({\pr^n}^\vee)^m$. 
As in \cite[\S 11]{dolgachev}, \cite{hu} and \cite[\S 2.8]{alexeev}, 
$\SL(n+1)$ acts $({\pr^n}^\vee)^m$ naturally. Moreover, some positive multiple of 
\[
L_{\vec{d}}:=\sO_{({\pr^n}^\vee)^m}(d_1,\dots,d_m)
\]
admits a unique $\SL(n+1)$-linearization. We are interested in the 
GIT stability of the point $\vec{p}\in({\pr^n}^\vee)^m$ with respects to (some 
positive multiple of) $L_{\vec{d}}$. See \cite{GIT} and \cite{dolgachev} for the basics of 
geometric invariant theory.

\begin{definition}\label{GIT_dfn}
We say that $\vec{p}\in({\pr^n}^\vee)^m$ (with respects to $L_{\vec{d}}$) is 
\begin{enumerate}
\renewcommand{\theenumi}{\arabic{enumi}}
\renewcommand{\labelenumi}{(\theenumi)}
\item\label{GIT_dfn1}
a \emph{GIT semistable} point if there exists $s\in H^0(({\pr^n}^\vee)^m, 
l L_{\vec{d}})^{\SL(n+1)}$ for some $l\in\Z_{>0}$ such that $s(\vec{p})\neq 0$ holds,
\item\label{GIT_dfn2}
a \emph{GIT polystable} point if GIT semistable and the $\SL(n+1)$-orbit of $\vec{p}$ is closed 
in the GIT semistable locus $(({\pr^n}^\vee)^m)^{\text{ss}}(L_{\vec{d}})$, 
\item\label{GIT_dfn3}
a \emph{GIT stable} point 
if GIT polystable and the isotropy subgroup $\SL(n+1)_{\vec{p}}$ of 
$\SL(n+1)$ is finite. 
\end{enumerate}
\end{definition}

\begin{remark}\label{GIT_remark}
In \cite{GIT}, a GIT polystable point is called \emph{stable} and a GIT stable point is 
called \emph{properly stable}. In \cite{dolgachev}, a GIT polystable point is called 
\emph{Kempf-stable}. 
\end{remark}

Here is a direct consequence of Theorem \ref{mainthm}, \cite[\S 11]{dolgachev} and 
\cite{hu}. 

\begin{corollary}\label{GIT_cor}
\begin{enumerate}
\renewcommand{\theenumi}{\arabic{enumi}}
\renewcommand{\labelenumi}{(\theenumi)}
\item\label{GIT_cor1}
The following are equivalent: 
\begin{enumerate}
\renewcommand{\theenumii}{\roman{enumii}}
\renewcommand{\labelenumii}{(\theenumii)}
\item\label{GIT_cor11}
$(X, \Delta)$ is a K-semistable log Fano pair, 
\item\label{GIT_cor12}
$\vec{p}\in({\pr^n}^\vee)^m$ is a GIT semistable point with respects to $L_{\vec{d}}$. 
\end{enumerate}
\item\label{GIT_cor2}
The following are equivalent: 
\begin{enumerate}
\renewcommand{\theenumii}{\roman{enumii}}
\renewcommand{\labelenumii}{(\theenumii)}
\item\label{GIT_cor21}
$(X, \Delta)$ is a K-polystable log Fano pair, 
\item\label{GIT_cor22}
$\vec{p}\in({\pr^n}^\vee)^m$ is a GIT polystable point with respects to $L_{\vec{d}}$. 
\end{enumerate}
\item\label{GIT_cor3}
The following are equivalent: 
\begin{enumerate}
\renewcommand{\theenumii}{\roman{enumii}}
\renewcommand{\labelenumii}{(\theenumii)}
\item\label{GIT_cor31}
$(X, \Delta)$ is a K-stable log Fano pair, 
\item\label{GIT_cor32}
$(X, \Delta)$ is a uniformly K-stable log Fano pair, 
\item\label{GIT_cor33}
$\vec{p}\in({\pr^n}^\vee)^m$ is a GIT stable point with respects to $L_{\vec{d}}$. 
\end{enumerate}
\end{enumerate}
\end{corollary}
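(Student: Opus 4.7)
The plan is to deduce each equivalence in Corollary \ref{GIT_cor} from Theorem \ref{mainthm} combined with the classical Hilbert--Mumford numerical criterion for ordered point configurations, as worked out in \cite{dolgachev, hu}. Since Theorem \ref{mainthm} already translates K-(semi/poly/)stability into purely combinatorial conditions on $(X, \Delta)$, the task is to verify that the GIT criterion produces the same combinatorics.

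First I would recall the numerical GIT criterion for $(({\pr^n}^\vee)^m, L_{\vec{d}})$: by running Hilbert--Mumford over the diagonal one-parameter subgroups of $\SL(n+1)$ (this reduction is essentially the content of \cite[\S 11]{dolgachev} and \cite{hu}), $\vec{p}$ is GIT semistable (resp.\ GIT stable) with respect to $L_{\vec{d}}$ if and only if for every proper linear subspace $V \subsetneq {\pr^n}^\vee$,
\[
\sum_{p_i \in V} d_i \leq \frac{(\dim V + 1)\, d_{(X, \Delta)}}{n+1}
\]
(resp.\ strict). Applying projective duality --- a $(c-1)$-dimensional $V \subset {\pr^n}^\vee$ corresponds to a codimension-$c$ linear subspace $W = V^\perp \subset \pr^n$, and $p_i \in V$ is equivalent to $W \subset H_i$ --- this inequality becomes exactly
\[
\frac{d_{(X, \Delta)}(W)}{c^X(W)} \leq \frac{d_{(X, \Delta)}}{n+1}
\]
for every $W \in \LL'(X, \Delta)$ (resp.\ strict). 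These are precisely the numerical conditions (iii) of Theorem \ref{mainthm}\eqref{mainthm1} and (iv) of Theorem \ref{mainthm}\eqref{mainthm2}, so parts \eqref{GIT_cor1} and \eqref{GIT_cor3} of the corollary follow at once, the latter also using the coincidence K-stable $=$ uniformly K-stable from Theorem \ref{mainthm}\eqref{mainthm2}.

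For part \eqref{GIT_cor2} I would argue as follows. By Theorem \ref{mainthm}\eqref{mainthm3}, K-polystability is equivalent to $(X, \Gamma)$ being of class $\sP$, i.e.\ to a join decomposition $(X, \Gamma) = S((\pr^{n_1}, \Xi_1), \dots, (\pr^{n_s}, \Xi_s))$ with each factor klt Calabi--Yau. On the GIT side, the standard description in \cite[\S 11]{dolgachev} identifies the GIT polystable (but not stable) configurations with respect to $L_{\vec{d}}$ precisely as those whose hyperplanes split into groups $\{H_i\}_{i \in I_1}, \dots, \{H_i\}_{i \in I_s}$ passing through complementary linear subspaces $P_1, \dots, P_s$ spanning $\pr^n$, with each subconfiguration GIT stable on the corresponding factor. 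Matching the two descriptions is direct: the $I_j$-groups correspond to the components $\Gamma_j = \langle \Xi_j, Q_j \rangle$ of the join presentation, and klt of the factor corresponds via part \eqref{GIT_cor3} (applied on each $\pr^{n_j}$) to GIT stability of the subconfiguration. This, together with the one-parameter subgroup picture from Example \ref{aut_P_ex} and Proposition \ref{dream_prod_prop}, identifies the class-$\sP$ configurations with exactly the GIT polystable ones.

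The main obstacle is this last bookkeeping step: I must confirm that the join decomposition defining class $\sP$ matches the Mumford/Kempf picture of closed $\SL(n+1)$-orbits in the semistable locus, where a non-stable polystable configuration is degenerated (via a one-parameter subgroup in its stabilizer) to a product of lower-dimensional stable configurations supported on complementary linear subspaces. Because this degeneration picture is exactly what Proposition \ref{suff_prop} realizes at the level of product-type prime divisors, no new GIT input is needed beyond \cite[\S 11]{dolgachev} and \cite{hu}; the verification reduces to dualizing hyperplanes to points and comparing two combinatorial descriptions of the same thing.
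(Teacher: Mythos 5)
Your proposal is correct and takes essentially the same route as the paper: the paper's entire proof is to combine Theorem \ref{mainthm} with the classical numerical criterion of \cite[Theorem 11.1]{dolgachev} for parts (1) and (3), and with the characterization of polystable configurations in \cite[Definition 3.2 and Proposition 3.3]{hu} for part (2), and your duality computation and splitting argument are exactly the content of those citations spelled out. The only slight inaccuracy is attribution: the splitting description of polystable (non-stable) configurations into balanced stable subconfigurations on complementary linear subspaces is what the paper draws from \cite{hu}, not from \cite[\S 11]{dolgachev}, though since you invoke both references this does not affect the argument.
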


\begin{proof}
For \eqref{GIT_cor1} and \eqref{GIT_cor3}, see Theorem \ref{mainthm} and 
\cite[Theorem 11.1]{dolgachev}. For \eqref{GIT_cor2}, see Theorem \ref{mainthm} and 
\cite[Definition 3.2 and Proposition 3.3]{hu}. 
\end{proof}

Therefore, the categorical quotient 
$(({\pr^n}^\vee)^m)^{\text{ss}}(L_{\vec{d}})\sslash \SL(n+1)$
(see \cite[Theorem 8.1]{dolgachev}) parametrizes the $\SL(n+1)$-orbits of 
K-polystable log Fano hyperplane arrangements 
$(\pr^n, \sum_{i=1}^m d_iH_i)$. Note that the space 
$(({\pr^n}^\vee)^m)^{\text{ss}}(L_{\vec{d}})\sslash \SL(n+1)$ 
is a projective variety (see \cite[Proposition 8.1]{dolgachev}). 

\section{Appendix: A remark on the alpha invariants}\label{alpha_section}

Let $(X, \Delta)$ be an $n$-dimensional log Fano pair and set $L:=-(K_X+\Delta)$. 
The \emph{alpha invariant} $\alpha(X, \Delta)$  of $(X, \Delta)$ (see \cite{tian_a}) 
is defined as follows (see \cite{Dem08}): 
\[
\alpha(X, \Delta):=\inf_{D_{\geq 0}\sim_\Q L}\lct(X, \Delta; D). 
\]
Recall the following theorem (see \cite[Theorem 2.1]{tian_a}, \cite[Theorem 1.4]{OS12}, 
\cite[Theorem 1.2]{dervan}, \cite[Corollary D]{BHJ}, \cite[Theorem 3.2]{FO} and 
\cite[Theorems A and B]{BJ}): 

\begin{thm}[{\cite{tian_a, OS12, dervan, BHJ, FO, BJ}}]\label{alpha1_thm}
Assume that $\alpha(X, \Delta)\geq n/(n+1)$ $($resp., $>n/(n+1))$. Then $(X, \Delta)$ is 
K-semistable $($resp., uniformly K-stable$)$. 
\end{thm}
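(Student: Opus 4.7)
The plan is to apply the valuative criterion of Theorem \ref{beta_thm}: K-semistability (resp.\ uniform K-stability) of $(X,\Delta)$ reduces to verifying $\hat{\beta}_{(X,\Delta)}(F)\geq 0$ (resp.\ $\hat{\beta}_{(X,\Delta)}(F)\geq \varepsilon>0$ uniformly in $F$) for every prime divisor $F$ over $X$. Introducing
\[
S_{(X,\Delta)}(F):=\frac{1}{(L^{\cdot n})}\int_0^\infty \vol_X(L-xF)\,dx, \qquad T(F):=\sup\bigl\{x\in\R_{\geq 0}\,\big|\,\vol_X(L-xF)>0\bigr\},
\]
we have $\hat{\beta}_{(X,\Delta)}(F)=1-S_{(X,\Delta)}(F)/A_{(X,\Delta)}(F)$, and the theorem will follow once we establish the sharp comparison
\[
S_{(X,\Delta)}(F)\;\leq\;\frac{n}{n+1}\cdot\frac{A_{(X,\Delta)}(F)}{\alpha(X,\Delta)}
\]
for every such $F$: the hypothesis $\alpha(X,\Delta)\geq n/(n+1)$ then forces $\hat{\beta}_{(X,\Delta)}(F)\geq 0$, while the strict inequality produces the uniform positive lower bound $\hat{\beta}_{(X,\Delta)}(F)\geq 1-\frac{n}{(n+1)\alpha(X,\Delta)}$.

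I would prove the sharp bound as the product of two independent inequalities. The first, relating $T(F)$ to the alpha invariant, is elementary: for any rational $x\in(0,T(F))$ the $\Q$-line bundle $L-xF$ is big, hence represented by an effective $\Q$-divisor $E_x\sim_\Q L-xF$, so $D_x:=E_x+xF\sim_\Q L$ is effective with $\ord_F(D_x)\geq x$; the estimate $\lct(X,\Delta;D_x)\leq A_{(X,\Delta)}(F)/\ord_F(D_x)$ combined with $\lct(X,\Delta;D_x)\geq\alpha(X,\Delta)$ yields $x\leq A_{(X,\Delta)}(F)/\alpha(X,\Delta)$, and letting $x\to T(F)^-$ gives
\[
T(F)\;\leq\;\frac{A_{(X,\Delta)}(F)}{\alpha(X,\Delta)}.
\]
The main obstacle is then the second, purely convex-geometric inequality $S_{(X,\Delta)}(F)\leq\frac{n}{n+1}T(F)$.

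To establish this second inequality I would invoke the Newton--Okounkov body construction. For a flag on $X$ refining $F$, the associated convex body $\Delta_F(L)\subset[0,T(F)]\times\R^{n-1}_{\geq 0}$ has $n$-dimensional Lebesgue volume $(L^{\cdot n})/n!$, and under the standard identification the quantity $S_{(X,\Delta)}(F)$ is exactly the first-coordinate barycenter of $\Delta_F(L)$ while $T(F)$ is its first-coordinate diameter. The Brunn--Minkowski concavity of the slicing function $t\mapsto\vol_{n-1}(\Delta_F(L)\cap\{t_1=t\})^{1/(n-1)}$ then delivers the universal centroid bound $\bar{t}_1\leq\frac{n}{n+1}T$ for any convex body with $\min t_1=0$; equality holds precisely when $\Delta_F(L)$ is a cone over its top face with apex on $\{t_1=0\}$, corresponding geometrically to $F$ being the exceptional divisor of the blowup of a smooth point of $\pr^n$, which both explains the extremal case and demonstrates the sharpness of the constant $n/(n+1)$. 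Multiplying the two inequalities yields the sharp bound and hence the theorem.
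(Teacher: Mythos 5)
Your proposal is correct and follows essentially the same route as the paper: the paper's proof is precisely this two-factor argument, citing \cite[Lemma 3.3]{FO} for the inequality $\tau(F)\,\alpha(X,\Delta)\leq A_{(X,\Delta)}(F)$ and \cite[Proposition 2.1]{fjt17} for $\frac{1}{(L^{\cdot n})}\int_0^\infty\vol_X(L-xF)\,dx\leq\frac{n}{n+1}\tau(F)$, then concluding via Theorem \ref{beta_thm}. The only difference is that you supply self-contained proofs (the standard lct/big-divisor argument, and the Okounkov-body centroid bound) of the two inequalities that the paper quotes from the literature.
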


\begin{proof}
We give a proof for the readers' convenience. For any prime divisor $F$ over $X$, set 
\[
\tau(F):=\sup\{x\in\R_{>0}\,|\,\vol_X(L-xF)>0\}.
\]
It is easy to show the inequality $A_{(X, \Delta)}(F)\geq\tau(F)\alpha(X, \Delta)$ 
(see \cite[Lemma 3.3]{FO}). Together with \cite[Proposition 2.1]{fjt17}, we have 
\[
\hat{\beta}_{(X, \Delta)}(F)\geq 1-\frac{n}{n+1}\cdot\frac{1}{\alpha(X, \Delta)}. 
\]
Thus the assertion follows from Theorem \ref{beta_thm}. 
\end{proof}

Moreover, if $X$ is smooth, $\Delta=0$ and $n\geq 2$, we can show the following: 

\begin{thm}[{\cite[Theorem 1.2]{fjtb}}]\label{alpha_thm}
If $X$ is smooth, $\Delta=0$, $n\geq 2$ and $\alpha(X, \Delta)=n/(n+1)$, then 
$(X, \Delta)$ is K-stable. 
\end{thm}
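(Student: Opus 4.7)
By Theorem \ref{stps_thm}\eqref{stps_thm1}, K-stability of $(X, 0)$ is equivalent to $\hat{\beta}_{(X, 0)}(F) > 0$ for every dreamy prime divisor $F$ over $X$. Since $\alpha(X, 0) = n/(n+1)$, Theorem \ref{alpha1_thm} already delivers the non-strict inequality $\hat{\beta}_{(X, 0)}(F) \geq 0$, so the task is to rule out equality. I argue by contradiction: assume that a dreamy prime divisor $F$ over $X$ satisfies $\hat{\beta}_{(X, 0)}(F) = 0$.

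Tracing through the proof of Theorem \ref{alpha1_thm}, the bound $\hat{\beta}_{(X, 0)}(F) \geq 1 - \tfrac{n}{n+1}\cdot\tfrac{1}{\alpha(X, 0)}$ is obtained by combining the log-discrepancy estimate $A_{(X, 0)}(F) \geq \tau(F)\cdot \alpha(X, 0)$ from \cite[Lemma 3.3]{FO} with the integrated-volume bound
\[
\int_0^\infty \vol_X(L - xF)\,dx \leq \tfrac{n}{n+1}\tau(F)\cdot (L^{\cdot n})
\]
from \cite[Proposition 2.1]{fjt17}. With $\alpha(X, 0) = n/(n+1)$, the assumption $\hat{\beta}_{(X, 0)}(F) = 0$ forces simultaneous equality in both. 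The first equality says $F$ computes $\alpha(X, 0)$: there exists an effective $\Q$-divisor $D \sim_\Q L$ with $\ord_F(D) = \tau(F)$ and $\lct(X; D) = n/(n+1)$. The second equality saturates a Newton--Okounkov body centroid inequality and therefore pins the Okounkov body $\Delta_F(L)$ (with respect to a flag starting from $F$) to the rigid shape of a pyramid whose apex lies in $\{y_1 = 0\}$ and whose base lies in $\{y_1 = \tau(F)\}$.

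I then exploit these two rigidities under the hypotheses $X$ smooth, $\Delta = 0$, and $n \geq 2$ to construct an effective $\Q$-divisor $D' \sim_\Q L$ with $\lct(X; D') < n/(n+1)$, directly contradicting $\alpha(X, 0) = n/(n+1)$. The pyramid shape of $\Delta_F(L)$ provides, for each sufficiently divisible $m$, a positive-dimensional family of sections of $mL$ with prescribed vanishing along $F$ coming from the $F$-adic filtration $\sF_F^\bullet V_m^r$. Smoothness of $X$ controls the discrepancies of divisors containing the center of $F$ under adjunction, and the codimensional freedom afforded by $n \geq 2$ allows a generic element of this family to be picked and combined with $D$: the resulting perturbation of $D$ picks up additional singularity along a proper subvariety of the non-klt locus of $(X, (n+1)/n \cdot D)$, producing $D'$ with a strictly smaller log canonical threshold.

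The main obstacle is precisely this explicit perturbation: translating the abstract Okounkov-body rigidity together with the alpha-equality into a concrete effective $\Q$-divisor whose lct strictly undercuts $n/(n+1)$. The dimension hypothesis $n \geq 2$ is essential for the statement itself, not merely its proof: the one-dimensional analogue would claim that $(\pr^1, 0)$ is K-stable (one checks $\alpha(\pr^1, 0) = 1/2 = n/(n+1)$ for $n = 1$), which is false since $(\pr^1, 0)$ is only K-polystable and admits nontrivial product-type test configurations from $\G_m \subset \PGL(2)$. Smoothness of $X$ similarly enters through the adjunction and discrepancy computations underlying the final lct comparison. The complete execution of this plan is carried out in \cite[Theorem 1.2]{fjtb}.
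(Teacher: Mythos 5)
This theorem is not proved in the paper at all: it is quoted verbatim from \cite[Theorem 1.2]{fjtb}, and no argument for it appears anywhere in the present article. So the only ``proof'' the paper offers is the citation --- and your attempt, after a correct setup, ultimately reduces to that same citation. The setup itself is fine: by Theorem \ref{stps_thm} (1) it suffices to exclude a dreamy prime divisor $F$ with $\hat{\beta}_{(X,0)}(F)=0$; when $\alpha(X,0)=n/(n+1)$, the two inequalities used in the proof of Theorem \ref{alpha1_thm} (namely $A_{(X,0)}(F)\geq\tau(F)\cdot\alpha(X,0)$ from \cite[Lemma 3.3]{FO} and $\int_0^\infty\vol_X(L-xF)\,dx\leq\frac{n}{n+1}\tau(F)\cdot(L^{\cdot n})$ from \cite[Proposition 2.1]{fjt17}) must both be equalities, and your identification of the equality case of the second with the cone/pyramid shape of the Okounkov body (equivalently, $\vol_X(L-xF)=(L^{\cdot n})\bigl(1-(x/\tau(F))^n\bigr)$) is correct, as is your $n=1$ sanity check with $\pr^1$.

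The genuine gap is everything after that, which is to say the entire heart of the proof. The contradiction --- extracting from the two rigidities an effective $\Q$-divisor $D'\sim_{\Q}L$ with $\lct(X,0;D')<n/(n+1)$, or any other contradiction --- is never carried out. The phrases ``smoothness of $X$ controls the discrepancies,'' ``the codimensional freedom afforded by $n\geq 2$ allows a generic element of this family to be picked,'' and ``picks up additional singularity along a proper subvariety of the non-klt locus'' are placeholders, not arguments: no divisor is constructed, no lct is computed, and no mechanism is given by which the Okounkov-body rigidity produces sections forcing a smaller threshold. You acknowledge this yourself (``the main obstacle is precisely this explicit perturbation'') and then close by deferring ``the complete execution of this plan'' to \cite[Theorem 1.2]{fjtb} --- that is, to the very statement being proved, which makes the proposal circular as a proof. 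Moreover, it is not evident that your sketched perturbation route is viable: the known argument turns the equalities $A_{(X,0)}(F)=\frac{n}{n+1}\tau(F)$ and $\vol_X(L-xF)=(L^{\cdot n})\bigl(1-(x/\tau(F))^n\bigr)$ into a contradiction through a delicate geometric analysis of the equality case (this rigidity analysis is the actual content of \cite{fjtb}), not through a generic choice of sections vanishing along $F$. In short: correct reduction and correct equality bookkeeping, but the proof is missing, not merely compressed.
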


It is natural to ask whether Theorem \ref{alpha_thm} is true or not without the assumption 
$X$ is smooth and $\Delta=0$. The following example and proposition show that it is not 
true in general. 

\begin{example}\label{alpha_ex}
Take arbitrary $n$, $m\in\Z_{>0}$ with $m\geq n+1$, and take any 
\[
t\in\biggl[\frac{m(n-1)}{(m-1)n}, 1\biggr)\cap\Q_{>0}.
\]
Set $X:=S(\pr^0, \pr^{n-1})(=\pr^n)$, let $p\in X$ be the point corresponds with $\pr^0$ 
(see Definition \ref{projective_dfn} \eqref{projective_dfn1}). 
If $n=1$, then set $\Gamma_i:=p$ for any $1\leq i\leq m$; 
if $n\geq 2$, then fix a hyperplane arrangement $(\pr^{n-1}, \sum_{i=1}^{mn}\Xi_i)$ 
with $\Xi_1,\dots,\Xi_{mn}$ general, and 
set $\Gamma_i:=\langle p, \Xi_i\rangle$ for any $1\leq i\leq mn$ (see Definition 
\ref{projective_dfn} \eqref{projective_dfn3}). 
Moreover, let $H_1,\dots,H_m\subset X$ be general hyperplanes and set 
\[
\Gamma:=\sum_{i=1}^{mn}\frac{1}{m}\Gamma_i+\sum_{j=1}^m \frac{1}{m}H_j, \quad\quad
\Delta:=t\Gamma.
\]
From the construction, $\Supp\Gamma|_{X\setminus\{p\}}$ is simple normal 
crossing. Moreover, $(X, \Gamma)$ is an lc Calabi-Yau hyperplane arrangement and is not 
of class $\sP$ (see Theorem \ref{mustata_thm} and Definition \ref{P_dfn}). 
\end{example}

\begin{proposition}\label{alpha_prop}
The above $n$-dimensional log Fano hyperplane arrangement 
$(X, \Delta)$ in Example \ref{alpha_ex} 
is K-semistable but not K-polystable. $($In particular, $(X, \Delta)$ is not K-stable.$)$ 
Moreover, we have the equality 
$\alpha(X, \Delta)=n/(n+1)$. 
\end{proposition}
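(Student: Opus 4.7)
The plan is to deduce K-semistability and failure of K-polystability directly from Theorem \ref{mainthm}, and then to compute $\alpha(X,\Delta)$ via a valuative criterion.

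Since $d_{(X, \Delta)} = t\cdot d_{(X, \Gamma)} = t(n+1)$, the normalization $\tfrac{n+1}{d_{(X, \Delta)}}\Delta$ equals $\Gamma$. By Example \ref{alpha_ex}, $(X, \Gamma)$ is an lc Calabi-Yau hyperplane arrangement that is not of class $\sP$, so Theorem \ref{mainthm}~\eqref{mainthm1} implies that $(X, \Delta)$ is K-semistable and Theorem \ref{mainthm}~\eqref{mainthm3} implies that $(X, \Delta)$ is not K-polystable.

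For $\alpha(X,\Delta)$ I would use the valuative formula
\[
\alpha(X,\Delta) = \inf_F \frac{A_{(X,\Delta)}(F)}{\tau(F)}, \qquad \tau(F) := \sup\{x > 0 \mid \vol_X(L-xF) > 0\},
\]
where $L := -(K_X+\Delta)$ and $F$ ranges over prime divisors over $X$. The inequality $\alpha \leq A/\tau$ is the one recalled in the proof of Theorem \ref{alpha1_thm}; the reverse follows because $\ord_F D \leq \tau(F)$ for every effective $D \sim_\Q L$, giving $\lct(X,\Delta;D) = \inf_F A/\ord_F D \geq \inf_F A/\tau$. The upper bound $\alpha \leq n/(n+1)$ then comes from applying the formula at $F = E$, the exceptional divisor of the blowup of $X$ at $p$: since all $mn$ hyperplanes $\Gamma_i$ pass through $p$ while the $H_j$ avoid $p$, one has $\mult_p \Gamma = n$, hence $A_{(X,\Delta)}(E) = n(1-t)$, and a direct computation on $\Bl_p \pr^n$ yields $\tau(E) = (1-t)(n+1)$, so $A/\tau = n/(n+1)$ at $E$.

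The hard part will be the lower bound $A_{(X,\Delta)}(F)/\tau(F) \geq n/(n+1)$ for every prime divisor $F$ over $X$. For a hyperplane $F \subset X$, one has $\tau(F) = (1-t)(n+1)$ and the inequality reduces to $t \geq (n-1)/n$ when $F \not\subset \Supp\Delta$ and to $t \geq m(n-1)/(mn-1)$ when $F$ is a component of $\Delta$; both are implied by the hypothesis $t \geq m(n-1)/((m-1)n)$ since $mn - 1 > (m-1)n$ for $n \geq 2$. For a general exceptional divisor $F$, I would write $A_{(X,\Delta)}(F) = A_{(X,0)}(F) - t\cdot\ord_F\Gamma$ and $\tau(F) = (1-t)(n+1)\,T_H(F)$ with $T_H(F) := \sup\{x : \vol_X(H - xF) > 0\}$, reducing the bound to $A_{(X,0)}(F) \geq n(1-t)\,T_H(F) + t\cdot\ord_F\Gamma$, which is then handled by the lc property of $(X,\Gamma)$ (giving $\ord_F \Gamma \leq A_{(X,0)}(F)$, with equality only when $F$ is centered at the single lc center $\{p\}$) together with case analysis. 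The main obstacle is this uniform verification across all valuations; equality $A/\tau = n/(n+1)$ is attained only at $F = E$, yielding $\alpha(X,\Delta) = n/(n+1)$.
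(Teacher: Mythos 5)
Your first half is correct and coincides with the paper's argument: K-semistability and the failure of K-polystability follow from Theorem \ref{mainthm} applied to $\Gamma=\tfrac{n+1}{d_{(X,\Delta)}}\Delta$, exactly as you say. Your upper bound $\alpha(X,\Delta)\le n/(n+1)$ is also correct: the valuative formula $\alpha(X,\Delta)=\inf_F A_{(X,\Delta)}(F)/\tau(F)$ is legitimate (this is \cite[Lemma 3.3]{FO} together with \cite{BJ}), and your computation $A_{(X,\Delta)}(E)=n(1-t)$, $\tau(E)=(1-t)(n+1)$ at the exceptional divisor $E$ of $\Bl_pX$ is just the valuative reformulation of the paper's observation that $\mult_p\bigl(\Delta+n(1-t)\Gamma_1\bigr)=n$.

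The genuine gap is the lower bound $\alpha(X,\Delta)\ge n/(n+1)$, which is the hard content of the proposition, and which you yourself flag as ``the main obstacle.'' You reduce it to the inequality $A_{(X,0)}(F)\ge n(1-t)\,T_H(F)+t\,\ord_F\Gamma$ for \emph{every} prime divisor $F$ over $X$, and assert that it is ``handled by the lc property of $(X,\Gamma)$ together with case analysis.'' But the only uniform consequence of lc-ness is $\ord_F\Gamma\le A_{(X,0)}(F)$, and substituting that into your inequality would require $A_{(X,0)}(F)\ge n\,T_H(F)$, which is simply false: for the exceptional divisor of the blowup of a line through $p$ (with $n\ge 3$) one has $A_{(X,0)}(F)=n-1$ while $T_H(F)=1$. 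So the needed estimate requires a quantitative coupling, valuation by valuation, between the defect $A_{(X,0)}(F)-\ord_F\Gamma$ and $T_H(F)$ --- most delicately for valuations centered at $p$, where the equality case lives --- and no such argument is given. Note that the paper sidesteps this uniform verification entirely by a contradiction argument that never computes pseudo-effective thresholds: if $\alpha(X,\Delta)<n/(n+1)$, there exist $\varepsilon>0$, an effective $\Q$-divisor $D$ of degree $n(1-t)-\varepsilon$ and a prime divisor $F$ with $(X,\Delta+D)$ lc and $A_{(X,\Delta+D)}(F)=0$; the hypothesis $t\ge m(n-1)/((m-1)n)$ forces $\mult_q(\Delta+D)<1$ for every $q\ne p$, so by \cite[Proposition 9.5.13]{L2} the center of $F$ must be $\{p\}$; since $A_{(X,\Delta+D)}(E)\ge\varepsilon>0$, $F$ is exceptional over $Y=\Bl_pX$, and then inversion of adjunction forces $(E,\sigma^{-1}_*\Delta|_E+\sigma^{-1}_*D|_E)$ to be non-klt, contradicting the same multiplicity bound applied on $E\cong\pr^{n-1}$. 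Some argument of this strength (or an actually completed case analysis) is required; as written, your proposal does not prove $\alpha(X,\Delta)\ge n/(n+1)$.
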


\begin{proof}
By Theorem \ref{mainthm}, the log Fano hyperplane arrangement 
$(X, \Delta)$ is K-semistable but not 
K-polystable. Note that the degree of $-(K_X+\Delta)$ is equal to $(n+1)(1-t)$. 
Since the pair $(X, \Delta+n(1-t)\Gamma_1)$ satisfies that the multiplicity 
$\mult_p(\Delta+n(1-t)\Gamma_1)$ of $\Delta+n(1-t)\Gamma_1$ at $p\in X$ 
is equal to $n$, we have $\alpha(X, \Delta)\leq n/(n+1)$. 

Assume that $\alpha(X, \Delta)<n/(n+1)$. Then we may assume that $n\geq 2$. 
There exist $\varepsilon\in\Q_{>0}$, 
an effective $\Q$-divisor $D$ on $X$ of degree $n(1-t)-\varepsilon$ and a prime 
divisor $F$ over $X$ such that $(X, \Delta+D)$ is lc and $A_{(X, \Delta+D)}(F)=0$. 
For any $q\in X\setminus\{p\}$, we have 
\[
\mult_q(\Delta+D)\leq \frac{nt}{m}+n(1-t)-\varepsilon\leq 1-\varepsilon<1.
\]
Thus $(X\setminus\{p\}, (\Delta+D)|_{X\setminus\{p\}})$ is klt by 
\cite[Proposition 9.5.13]{L2}. This implies that the image of $F$ on $X$ 
must be equal to $\{p\}$. Let $\sigma\colon Y\to X$ be the blowup along $p$ and let 
$E(\simeq\pr^{n-1})\subset Y$ be the exceptional divisor of $\sigma$. Since 
\[
A_{(X, \Delta+D)}(E)=n-\mult_p(\Delta+D)\geq n-nt-(n(1-t)-\varepsilon)=\varepsilon>0, 
\]
we have $E\neq F$. In other words, $F$ is exceptional over $Y$. 
Moreover, the pair $(Y, \sigma^{-1}_*\Delta+\sigma^{-1}_*D+E)$ 
satisfies that $A_{(Y, \sigma^{-1}_*\Delta+\sigma^{-1}_*D+E)}(F)\leq 0$. 
Thus the pair $(E, \sigma^{-1}_*\Delta|_E+\sigma^{-1}_*D|_E)$ is not klt by inversion 
of adjunction (see \cite[Theorem 5.50]{KoMo}). Note that $\sigma^{-1}_*\Delta|_E$ is 
equal to $(t/m)\sum_{i=1}^{mn}\Xi_i$ under the natural identification $E=\pr^{n-1}$, 
and the degree of $\sigma^{-1}_*D|_E$ is at most $n(1-t)-\varepsilon$. Thus, for any 
$p'\in E$, we have 
\[
\mult_{p'}(\sigma^{-1}_*\Delta|_E+\sigma^{-1}_*D|_E)\leq
\frac{(n-1)t}{m}+n(1-t)-\varepsilon<1.
\]
Again by \cite[Proposition 9.5.13]{L2}, the pair 
$(E, \sigma^{-1}_*\Delta|_E+\sigma^{-1}_*D|_E)$ must be klt. 
This leads to a contradiction. Thus we have $\alpha(X, \Delta)=n/(n+1)$. 
\end{proof}


\begin{thebibliography}{99}

\bibitem[Ale15]{alexeev}
V.\ Alexeev, \emph{Moduli of weighted hyperplane arrangements}, 
Edited by Gilberto Bini, Mart\'i Lahoz, Emanuele Macr\`i and Paolo Stellari. 
Advanced Courses in Mathematics. CRM Barcelona. Birkh\"auser/Springer, Basel, 2015. 

\bibitem[Amb05]{ambro}
F.\ Ambro, \emph{The moduli b-divisor of an lc-trivial fibration}, 
Compos.\ Math.\ \textbf{141} (2005), no.\ 2, 385--403.

\bibitem[BBJ15]{BBJ}
R.\ Berman, S.\ Boucksom and M.\ Jonsson, \emph{A variational approach to the 
Yau-Tian-Donaldson conjecture}, arXiv:1509.04561v1.

\bibitem[Ber16]{B}
R.\ Berman, \emph{K-polystability of Q-Fano varieties admitting 
K\"ahler-Einstein metrics}, 
Invent.\ Math.\ \textbf{203} (2016), no.\ 3, 973--1025.

\bibitem[BHJ15]{BHJ}
S.\ Boucksom, T.\ Hisamoto and M.\ Jonsson, \emph{Uniform K-stability, 
Duistermaat-Heckman measures and singularities of pairs}, arXiv:1504.06568v3; 
to appear in Ann.\ Inst.\ Fourier.

\bibitem[Bir16]{BAB}
C.\ Birkar, \emph{Singularities of linear systems and boundedness of Fano varieties}, 
arXiv:1609.05543v1.

\bibitem[BJ17]{BJ}
H.\ Blum and M.\ Jonsson, \emph{Thresholds, valuations, and K-stability}, 
arXiv:1706.04548v1. 

\bibitem[Blu16]{blum}
H.\ Blum, \emph{Existence of Valuations with Smallest Normalized Volume}, 
arXiv:1606.08894v2.

\bibitem[CDS15a]{CDS1}
X.\ Chen, S.\ Donaldson and S.\ Sun, \emph{K\"ahler-Einstein metrics on 
Fano manifolds, 
I: approximation of metrics with cone singularities}, 
J.\ Amer.\ Math.\ Soc.\ \textbf{28} (2015), no.\ 1, 183--197.

\bibitem[CDS15b]{CDS2}
X.\ Chen, S.\ Donaldson and S.\ Sun, \emph{K\"ahler-Einstein metrics on 
Fano manifolds, 
II: limits with cone angle less than $2\pi$}, 
J.\ Amer.\ Math.\ Soc.\ \textbf{28} (2015), no.\ 1, 199--234.

\bibitem[CDS15c]{CDS3}
X.\ Chen, S.\ Donaldson and S.\ Sun, \emph{K\"ahler-Einstein metrics on 
Fano manifolds, 
III: limits as cone angle approaches $2\pi$ and completion of the main proof}, 
J.\ Amer.\ Math.\ Soc.\ \textbf{28} (2015), no.\ 1, 235--278.

\bibitem[CL91]{CL}
W.X.\ Chen and C.\ Li, \emph{Prescribing Gaussian curvatures on surfaces with 
conical singularities}, 
J.\ Geom.\ Anal.\ \textbf{1} (1991), no.\ 4, 359--372. 

\bibitem[CY88]{CY}
S.-Y.\ Chang and P.\ Yang, \emph{Conformal deformation of metrics on $S^2$}, 
J.\ Differential Geom.\ \textbf{27} (1988), no.\ 2, 259--296. 

\bibitem[Dem08]{Dem08}
J.-P.\ Demailly, 
Appendix to I.\ Cheltsov and C.\ Shramov's article. 
``Log canonical thresholds of smooth Fano threefolds” : \emph{On Tian's invariant and log 
canonical thresholds}, 
Russian Math.\ Surveys \textbf{63} (2008), no.\ 5, 945--950. 

\bibitem[Der16]{dervan}
R.\ Dervan, \emph{Uniform stability of twisted constant scalar curvature 
K\"ahler metrics}, Int.\ Math.\ Res.\ Not.\ IMRN 2016, no.\ 15, 4728--2783.

\bibitem[Dol03]{dolgachev}
I.\ Dolgachev, \emph{Lectures on invariant theory}, London Mathematical 
Society Lecture Note Series, \textbf{296}. Cambridge University Press, 
Cambridge, 2003. 

\bibitem[Don02]{don}
S.\ Donaldson, \emph{Scalar curvature and stability of toric varieties}, 
J.\ Differential Geom.\ \textbf{62} (2002), no.\ 2, 289--349. 

\bibitem[Fuj16a]{fjt}
K.\ Fujita, \emph{A valuative criterion for uniform K-stability of $\mathbb{Q}$-Fano 
varieties}, 
J.\ Reine Angew.\ Math., DOI: 10.1515/crelle-2016-0055.

\bibitem[Fuj16b]{fjtb}
K.\ Fujita, \emph{K-stability of Fano manifolds with not small alpha invariants}, 
arXiv:1606.08261v1; accepted by J.\ Inst.\ Math.\ Jussieu.

\bibitem[Fuj17]{fjt17}
K.\ Fujita, \emph{Uniform K-stability and plt blowups of log Fano pairs}, 
arXiv:1701.00203v1; accepted by Kyoto J.\ Math.

\bibitem[FO16]{FO}
K.\ Fujita and Y.\ Odaka, \emph{On the K-stability of Fano varieties and anticanonical 
divisors}, arXiv:1602.01305v2; accepted by Tohoku Math.\ J.

\bibitem[Hu05]{hu}
Y.\ Hu, \emph{Stable configurations of linear subspaces and quotient coherent sheaves}, 
Q.\ J.\ Pure Appl.\ Math.\ \textbf{1} (2005), no.\ 1, 127--164. 

\bibitem[JM12]{JM}
M.\ Jonsson and M.\ Musta\c{t}\u{a}, \emph{Valuations and asymptotic invariants for 
sequences of ideals}, Ann.\ Inst.\ Fourier (Grenoble) \textbf{62} (2012), no.\ 6, 
2145--2209. 
\bibitem[Kem78]{kempf}
G.\ Kempf, \emph{Instability in invariant theory}, 
Ann.\ of Math.\ \textbf{108} (1978), no.\ 2, 299--316. 

\bibitem[KM98]{KoMo}
J.\ Koll{\'a}r and S.\ Mori, \emph{Birational geometry of algebraic varieties},
With the collaboration of C.\ H.\ Clemens and A.\ Corti. 
Cambridge Tracts in Math., \textbf{134},
Cambridge University Press, Cambridge, 1998.

\bibitem[Laz04a]{L1}
R.\ Lazarsfeld, \emph{Positivity in algebraic geometry, I: Classical setting: line bundles 
and linear series}, Ergebnisse der Mathematik und ihrer Grenzgebiete.\ (3) 
\textbf{48}, Springer, Berlin, 2004.

\bibitem[Laz04b]{L2}
R.\ Lazarsfeld, \emph{Positivity in algebraic geometry, II: Positivity for Vector Bundles, 
and Multiplier Ideals}, Ergebnisse der Mathematik und ihrer Grenzgebiete.\ (3) 
\textbf{49}, Springer, Berlin, 2004.

\bibitem[Li15]{liR}
C.\ Li, \emph{Remarks on logarithmic K-stability}, 
Commun.\ Contemp.\ Math.\ \textbf{17} (2015), no.\ 2, 1450020, 17pp. 

\bibitem[Li16]{li}
C.\ Li, \emph{K-semistability is equivariant volume minimization}, 
arXiv:1512.07205v5; to appear in Duke Math.\ J.

\bibitem[LT92]{LT}
F.\ Luo and G.\ Tian, \emph{Liouville equation and spherical convex polytopes}, 
Proc.\ Amer.\ Math.\ Soc.\ \textbf{116} (1992), no.\ 4, 1119--1129. 

\bibitem[LX14]{LX}
C.\ Li and C.\ Xu, \emph{Special test configuration and K-stability of Fano varieties}, 
Ann.\ of Math.\ \textbf{180} (2014), no.\ 1, 197--232. 

\bibitem[McO88]{mcowen}
R.\ McOwen, \emph{Point singularities and conformal metrics on Riemann surfaces}, 
Proc.\ Amer.\ Math.\ Soc.\ \textbf{103} (1988), no.\ 1, 222--224. 

\bibitem[MFK94]{GIT}
D.\ Mumford, J.\ Fogarty and F.\ Kirwan, \emph{Geometric invariant theory}, 
Third edition. Ergebnisse der Mathematik und ihrer Grenzgebiete (2) [Results in Mathematics and Related Areas (2)], \textbf{34}. Springer-Verlag, Berlin, 1994. 

\bibitem[Mus06]{mustata}
M.\ Musta\c{t}\u{a}, \emph{Multiplier ideals of hyperplane arrangements}, 
Trans.\ Amer.\ Math.\ Soc.\ \textbf{358} (2006), no.\ 11, 5015--5023. 

\bibitem[Odk13]{odk}
Y.\ Odaka, \emph{A generalization of the Ross-Thomas slope theory}, 
Osaka.\ J.\ Math.\ \textbf{50} (2013), no.\ 1, 171--185.

\bibitem[OS12]{OS12}
Y.\ Odaka and Y.\ Sano, 
\emph{Alpha invariants and K-stability of $\mathbb{Q}$-Fano varieties}, 
Adv.\ Math.\ \textbf{229} (2012), no.\ 5, 2818--2834. 

\bibitem[RT07]{RT}
J.\ Ross and R.\ Thomas, \emph{A study of the Hilbert-Mumford criterion for the 
stability of projective varieties}, J.\ Algebraic Geom.\ 
\textbf{16} (2007), no.\ 2, 201--255.

\bibitem[Sto09]{stoppa}
J.\ Stoppa, \emph{K-stability of constant scalar curvature K\"ahler manifolds}, 
Adv.\ Math.\ \textbf{221} (2009), no.\ 4, 1397--1408. 

\bibitem[Sz\'e15]{sz}
G.\ Sz\'ekelyhidi, \emph{Filtrations and test-configurations}, with an appendix by 
S.\ Boucksom, Math.\ Ann.\ \textbf{362} (2015), no.\ 1-2, 451--484. 

\bibitem[Tei08]{teitler}
Z.\ Teitler, \emph{A note on Musta\c{t}\u{a}'s computation of multiplier ideals of 
hyperplane arrangements}, 
Proc.\ Amer.\ Math.\ Soc.\ \textbf{136} (2008), no.\ 5, 1575--1579.

\bibitem[Tia87]{tian_a}
G.\ Tian, 
\emph{On K\"ahler-Einstein metrics on certain K\"ahler manifolds with 
$C_1(M)>0$}, Invent.\ Math.\ \textbf{89} (1987), no.\ 2, 225--246.

\bibitem[Tia97]{tian}
G.\ Tian, \emph{K\"ahler-Einstein metrics with positive scalar curvature}, 
Invent.\ Math.\ \textbf{130} (1997), no.\ 1, 1--37.

\bibitem[Tia15]{tian2}
G.\ Tian, \emph{K-stability and K\"ahler-Einstein metrics}, 
Comm. Pure Appl.\ Math.\ \textbf{68} (2015), no.\ 7, 1085--1156. 

\bibitem[Tro91]{troyanov}
M.\ Troyanov, \emph{Prescribing curvature on compact surfaces with conical 
singularities}, Trans.\ Amer.\ Math.\ Soc.\ \textbf{324} (1991), no.\ 2, 793--821.

\bibitem[Wan12]{wang}
X.\ Wang, \emph{Height and GIT weight}, 
Math.\ Res.\ Lett.\ \textbf{19} (2012), no.\ 4, 909--926. 

\end{thebibliography}
\end{document}